\providecommand{\href}[2]{#2}
\theoremstyle{plain}
\newtheorem{Thm}{Theorem}
\newtheorem{Coro}[Thm]{Corollary}
\newtheorem{Lem}[Thm]{Lemma}
\newtheorem{Prop}[Thm]{Proposition}
\theoremstyle{definition}
\newtheorem{Rem}[Thm]{Remark}
\newtheorem{Exa}[Thm]{Example}
\newtheorem{Def}[Thm]{Definition}
\def\H{\mathbb{H}}
\def\R{\mathbb{R}}
\def\Z{\mathbb{Z}}
\def\C{\mathbb{C}}
\def\Q{\mathbb{Q}}
\def\bd{\partial}
\DeclareMathOperator{\Int}{Int}
\DeclareMathOperator{\im}{im}
\newcommand{\co}{\colon\thinspace}
\def\tri{\mathcal{T}}
\newif\ifcolour
\begin{document}

\title[Triangulations of 3--manifolds with essential edges]{Triangulations of 3--Manifolds with essential edges}

\author[Hodgson, Rubinstein, Segerman and Tillmann]{Craig D.\thinspace Hodgson, J.\thinspace Hyam Rubinstein, Henry Segerman and Stephan Tillmann}

\address{CH \& HR: Department of Mathematics and Statistics\\
         The University of Melbourne\\
         VIC 3010, Australia}

\address{HS: Department of Mathematics \\
         Oklahoma State University \\
         Stillwater,  Oklahoma, OK 74078, USA}
         
\address{ST: School of Mathematics and Statistics \\
         The University of Sydney \\
         NSW 2006,  Australia}
                  
\email{craigdh@unimelb.edu.au}
\email{rubin@ms.unimelb.edu.au}
\email{segerman@math.okstate.edu}
\email{tillmann@maths.usyd.edu.au}

\thanks{This research is supported by the Australian Research Council under the Discovery Projects funding scheme (project DP130103694).}

\begin{abstract}
We define essential and strongly essential triangulations of $3$--manifolds, and give four constructions using different tools (Heegaard splittings, hierarchies of Haken 3--manifolds, Epstein-Penner decompositions, and cut loci of Riemannian manifolds) to obtain triangulations with these properties under various hypotheses on the topology or geometry of the manifold. 

We also show that a semi-angle structure is a sufficient condition for a triangulation of a 3--manifold to be essential, and a strict angle structure is a sufficient condition for a triangulation to be strongly essential. Moreover, algorithms to test whether a triangulation of a 3--manifold is essential or strongly essential are given.
\end{abstract}

\subjclass[2010]{57N10; 57Q15}
\keywords{3--manifold, essential triangulation, geometric triangulation}

\dedicatory{Dedicated to Michel Boileau for his many contributions and leadership in low dimensional topology.}

\maketitle


\section{Introduction}

Finding combinatorial and topological properties of geometric triangulations of hyperbolic 3--manifolds gives information which is useful for the reverse process---for example, solving Thurston's gluing equations to construct an explicit hyperbolic metric from an ideal triangulation. 
Moreover, suitable properties of a triangulation can be used to deduce facts about the variety of representations of the fundamental group of the underlying $3$-manifold into $\text{PSL}(2,\C)$.  See for example  \cite{segerman_tillmann_11}. 

In this paper, we focus on one-vertex triangulations of closed manifolds and ideal triangulations of the interiors of compact manifolds with boundary. Two important properties of these triangulations are introduced. For one-vertex triangulations in the closed case, the first property is that no edge loop is null-homotopic and the second is that no two edge loops are homotopic, keeping the vertex fixed. We call a triangulation with the first property \emph{essential} and with both properties \emph{strongly essential}. Similarly, for ideal triangulations of the interiors of compact manifolds, we say that the triangulation is \emph{essential} if no ideal edge is homotopic into a vertex neighbourhood keeping its ends in the vertex neighbourhood, and is \emph{strongly essential} if no two ideal edges are homotopic, keeping their ends in the respective vertex neighbourhoods. Precise definitions are given in \S\ref{sec:Essential and strongly essential}.\\

Four different constructions are given of essential and strongly essential triangulations, each using a different condition on the underlying manifold. Each construction is illustrated with key examples.

The first construction applies to closed $P^2$-irreducible 3--manifolds different from $\R P^3$ and with non-zero first homology with $\Z_2$ coefficients. For these manifolds, an essential triangulation is constructed using one-sided Heegaard splittings. A similar construction, using Heegaard splittings with compression bodies each containing a torus boundary component, gives essential ideal triangulations of the interiors of compact orientable irreducible atoroidal 3--manifolds with boundary consisting of two incompressible tori. 

The second construction gives strongly essential triangulations of closed Haken 3--manifolds using hierarchies. A variation of this construction produces strongly essential ideal triangulations of the interiors of compact irreducible atoroidal 3--manifolds with boundary consisting of incompressible tori.

The third construction shows how strongly essential triangulations of non-compact complete hyperbolic $3$--manifolds can be obtained by subdividing the polyhedral decompositions of Epstein and Penner~\cite{EP}.  

In the fourth construction, a one-vertex strongly essential triangulation of a closed $3$--manifold with a Riemannian metric of constant negative or zero curvature is constructed from the dual of the cut locus of an arbitrary point. 
The same method applies to some classes of elliptic closed $3$--manifolds, i.e those with constant positive curvature. However in the latter case, there are restrictions on the fundamental groups. 

We expect that these constructions will extend to higher dimensional manifolds, but in this paper we focus on dimension three.\\

We also will show that an ideal triangulation of a 3--manifold admitting a strict angle structure must be strongly essential and if a triangulation has a semi-angle structure, then it must be essential. The class of semi-angle structures contains the class of taut structures. Moreover,  examples of taut triangulations which are not strongly essential are given, as well as an example of a taut triangulation which is strongly essential, but does not support a strict angle structure. 

In the last section, we show that there are algorithms to decide whether a given triangulation of a closed 3--manifold is essential or strongly essential, based on the fact that the word problem is uniformly solvable in the class of fundamental groups of compact, connected 3--manifolds. Similarly, building on work of Friedl and Wilton~\cite{FW}, we show that there is an algorithm to test whether an ideal triangulation is essential based on the fact that the subgroup membership problem is solvable in the same generality. 
However, to test whether an ideal triangulation is strongly essential, we require the double coset membership problem to be solvable for pairs of peripheral subgroups. Building on work of Aschenbrenner, Friedl and Wilton~\cite{AFW}, we restrict this algorithm to the following three classes: topologically finite hyperbolic 3--manifolds, compact Seifert fibred spaces, and
compact, irreducible, orientable, connected 3--manifolds with incompressible boundary consisting of a disjoint union of tori.\\

We would like to thank the referee for many detailed suggestions to improve the exposition. 


 \section{Preliminaries}
 \label{sec:Preliminaries} 
 
The purpose of this section is to establish the basic definitions and to recall basic properties of $3$--manifolds. Our set-up is somewhat rigid in order to describe objects, such as the compact core, in an algorithmically constructible fashion.
 

\subsection{Triangulations}
\label{sec:Triangulations}
 
Let $\widetilde{\Delta}$ be a finite union of pairwise disjoint Euclidean $3$--simplices with the standard simplicial structure. Every $k$--simplex $\tau$ in $\widetilde{\Delta}$ is contained in a unique $3$--simplex $\sigma_\tau.$ A $2$--simplex in $\widetilde{\Delta}$ is termed a \emph{facet}.

Let $\Phi$ be a family of affine isomorphisms pairing the facets in $\widetilde{\Delta},$ with the properties that $\varphi \in \Phi$ if and only if $\varphi^{-1}\in \Phi,$ and every facet is the domain of a unique element of $\Phi.$ The elements of $\Phi$ are termed \emph{face pairings}.

Consider the quotient space $\widehat{M} = \widetilde{\Delta}/\Phi$ with the quotient topology, and denote the quotient map $p\co \widetilde{\Delta} \to \widehat{M}.$ We will make the additional assumption on $\Phi$ that for every $k$--simplex $\tau$ in $\widetilde{\Delta}$ the restriction of $p$ to the interior of $\tau$ is injective. Then
the set of non-manifold points of $\widehat{M}$ is contained in the $0$--skeleton, and in this case $\widehat{M}$ is called a closed $3$--dimensional pseudo-manifold. (See Seifert-Threfall~\cite{SeiThr}.) We will also always assume that $\widehat{M}$ is connected.
The triple $\tri = ( \widetilde{\Delta}, \Phi, p)$ is a \emph{(singular) triangulation} of $\widehat{M},$ but for brevity we will often simply say that $\widehat{M}$ is given with the structure of a triangulation.

Let $\widehat{M}^{(k)}= \{ p(\tau^k) \mid \tau^k \subseteq \widetilde{\Delta}\}$ denote the set of images of the $k$--simplices 
of $\widetilde{\Delta}$ under the projection map. Then the elements of $\widehat{M}^{(k)}$ are precisely the equivalence classes of $k$--simplices in $\widetilde{\Delta}.$
The elements of $\widehat{M}^{(0)}$ are termed the \emph{vertices} of $\widehat{M}$ and the elements of $\widehat{M}^{(1)}$ are the \emph{edges}. 
The triangulation is a \emph{$k$--vertex triangulation} if $\widehat{M}^{(0)}$ has size $k.$ 


\subsection{Edge paths and edge loops}
\label{sec:Edge paths and edge loops}

To each edge $e$ in $\widehat{M}^{(1)}$ we associate a continuous path $\gamma_e \co [0,1]\to \widehat{M}$ with endpoints in $\widehat{M}^{(0)}$ via the composition of maps $[0,1] \to \widetilde{\Delta} \to \widehat{M},$ where the first map is an affine map onto a 1--simplex in the equivalence class and the second is the quotient map. We call $\gamma_e$ an \emph{edge path}. An edge path is termed an \emph{edge loop} if its ends coincide. For instance, in a 1--vertex triangulation every edge path is an edge loop.

We often abbreviate ``edge path" or ``edge loop" to ``edge," and we denote the edge path $\gamma_e$ with reversed orientation by $-\gamma_e.$ The notions of interest in this paper are independent of the parametrisation except that we often need to take care of the fact that we have chosen an arbitrary orientation of each edge.


\subsection{Ideal triangulations}
\label{sec:Ideal triangulations}

Let $M = \widehat{M}\setminus \widehat{M}^{(0)}.$ Then $M$ is a topologically finite, non-compact 3--manifold, and the pseudo-manifold $\widehat{M}$ is referred to as the \emph{end-compactification} of $M,$ the elements of $\widehat{M}^{(0)}$ as the \emph{ideal vertices} of $M$ and the elements of $\widehat{M}^{(1)}$ as the \emph{ideal edges} of $M.$ 
We also refer to the triangulation $\tri = ( \widetilde{\Delta}, \Phi, p)$ of $\widehat{M}$ as an \emph{ideal (singular) triangulation} of $M.$

The \emph{compact core} $M^c$ of $M$ is the result of taking the second barycentric subdividivision of $\widehat{M}$ and deleting the open star of each vertex. The components of $\widehat{M} \setminus M^c$ are referred to as the \emph{ends} of $M.$ The dual $2$--skeleton $X$ in $\widehat{M}$ is a \emph{spine} for $M^c$ (see \S\ref{sec:spine}). Hence 
$\pi_1(X) = \pi_1(M^c) = \pi_1(M),$ whilst the fundamental group of $\widehat{M}$ is a quotient thereof.


\subsection{Hats off}

The adjective \emph{singular} is usually omitted: we will not need to distinguish between simplicial or singular triangulations. If $\widehat{M}$ is a closed $3$--manifold, we may write $M = \widehat{M},$ and hope this will not cause any confusion. We will suppress the notation $\tri = ( \widetilde{\Delta}, \Phi, p)$ and simply say that $\widehat{M}$ is triangulated or $M = \widehat{M}\setminus \widehat{M}^{(0)}$ is ideally triangulated.


\subsection{Surfaces in 3--manifolds}

Throughout, $3$--manifolds will be compact, connected and we will work in the PL category. A $3$--manifold $N$ is called \emph{irreducible} if every embedded $2$--sphere bounds a $3$--ball; it is $P^2$--irreducible if $N$ is irreducible and does not contain any embedded 2-sided projective planes. An embedded 2-sided surface $S$ in $N$ is called \emph{incompressible} if $S$ is not simply connected and the inclusion map induces an injection $\pi_1(S) \to \pi_1(N)$.
If $S$ is properly embedded in $N$ then $S$ is called \emph{$\partial$-incompressible} if the induced map $\pi_1(S, \partial S) \to \pi_1(N, \partial N)$ is one-to-one. 
A closed $3$--manifold is called \emph{Haken} if it is $P^2$-irreducible and contains an incompressible surface. If $N$ has non-empty boundary, then it is Haken if it is $P^2$--irreducible and each component of $\partial N$ is incompressible. 
Moreover, $N$ is \emph{atoroidal} if any incompressible torus or Klein bottle is isotopic to a component of $\partial N$; $N$ is \emph{anannular} if it has non-empty boundary and contains no incompressible and $\partial$-incompressible annuli or M\"obius bands.  


\subsection{Heegaard splittings}

A \emph{handlebody} is a regular neighbourhood of an embedded graph in a 3--manifold $N$ and a \emph{compression body} is a regular neighbourhood of the connected union of a finite collection of graphs and some boundary components of $N$. 

A \emph{Heegaard surface} in a compact $3$--manifold $N$ is a 2--sided, properly embedded, connected, closed surface $S$ in $N,$ which splits $N$ into two handlebodies or compression bodies $H_1, H_2$. 
The decomposition $N = H_1 \cup_S H_2$ is called a \emph{Heegaard splitting} of $N.$ For emphasis, we will sometimes say that this is a \emph{2--sided} Heegaard splitting.
Note that if $N$ is non-orientable, so are $H_1,H_2$.

A \emph{1--sided Heegaard surface} is a 1--sided, properly embedded, connected, closed surface $S$ in $N$ such that the complement of an open regular neighbourhood $U(S) \subset N$ is a handlebody or compression body $H$. 
The decomposition $N = U(S) \cup_{\partial U(S)} H$ is called a \emph{1--sided Heegaard splitting} of $N.$ In this case, $N$ can be viewed as the result of identifying points of one of the boundary surfaces of a compression body or handlebody under a free involution.
Note that if $N$ is non-orientable, then so are $S$ and $H$. 

A (1-- or 2--sided) Heegaard surface $S$ in $N$ is \emph{irreducible} if there is no separating embedded $2$-sphere in $N,$ which intersects $S$ in a single loop that is essential on $S$. We will also say that the associated Heegaard splitting is irreducible. If $N$ is irreducible and there is such a $2$-sphere $\Sigma$ meeting $S$ in a single essential loop, then $\Sigma$ bounds a $3$-ball $B$ in $N,$ which meets $S$ in a non-simply connected region. By \cite{wald}, the Heegaard splitting of $B$ given by $B \cap S$ is standard, consisting of trivial handles.
Equivalently, $B \cap S$ is isotopic to the boundary of a regular neighbourhood of the union of $\partial B$ and an unknotted collection of properly embedded arcs in $B$.


\subsection{Spines}
\label{sec:spine}

To construct triangulations, we will make use of \emph{spines} of 3--manifolds. We follow the terminology of the monograph by Matveev~\cite{Mat}. 
A spine of the compact manifold $N$ with non-empty boundary is a compact subpolyhedron $P$ of $N$ with the property that $N$ collapses to $P.$ Typically $\dim P= \dim N -1.$ 
We will usually arrange for $P \subset \Int(N),$ so that $P$ is a spine if and only if $N \setminus P$ is homeomorphic with $\partial N \times [0,1)$ (see \cite{Mat}, Theorem~1.1.7). A spine of a closed 3--manifold $N$ is a spine of the complement of an open ball in $N.$ A spine of a topologically finite non-compact 3--manifold $N$ is a spine of a compact core of $N.$

If $M = \widehat{M}\setminus \widehat{M}^{(0)}$ is given with the structure of an ideal triangulation, there is a natural spine dual to the triangulation obtained by inserting a so-called \emph{butterfly} in each tetrahedron. The butterfly is the intersection of the dual 2--skeleton with the tetrahedron (see Figure~\ref{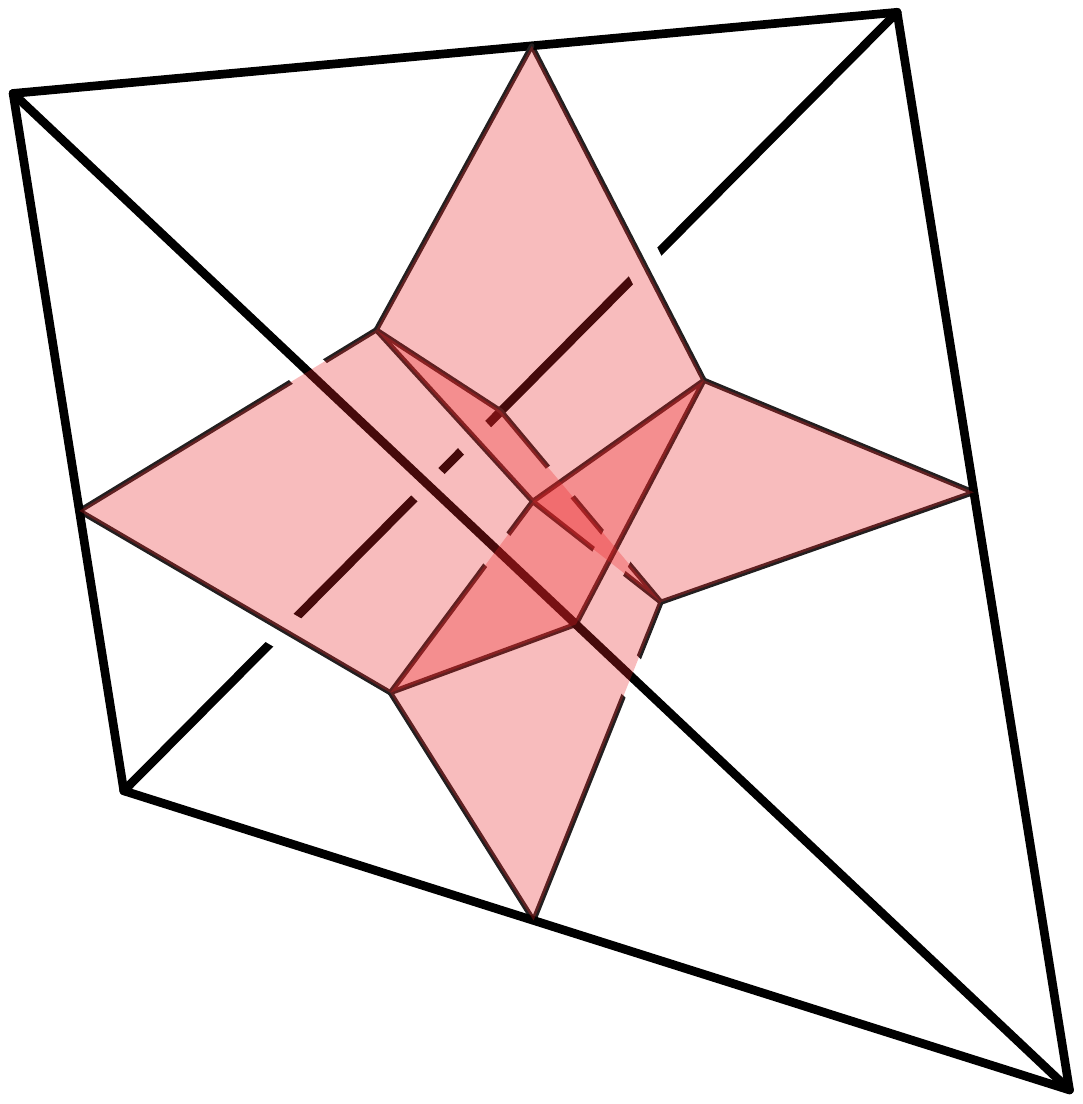}). 

\begin{figure}[htbp]
\centering
\includegraphics[width=0.4\textwidth]{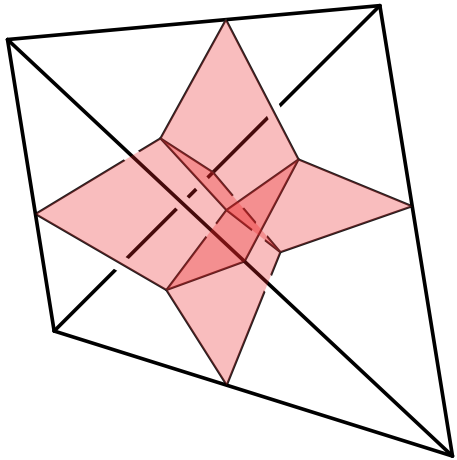}
\caption{A butterfly in a tetrahedron.}
\label{butterfly.pdf}
\end{figure}

A spine is \emph{simple} if every point on the spine has a neighbourhood homeomorphic to a neighbourhood of a point in the interior of the butterfly. The points on a simple spine fall into three categories: \emph{(true) vertices} have arbitrarily small neighbourhoods homeomorphic to the butterfly, 
\emph{non-singular points} have a neighbourhood homeomorphic to a disc, and all other points are \emph{triple points} (see Figure~\ref{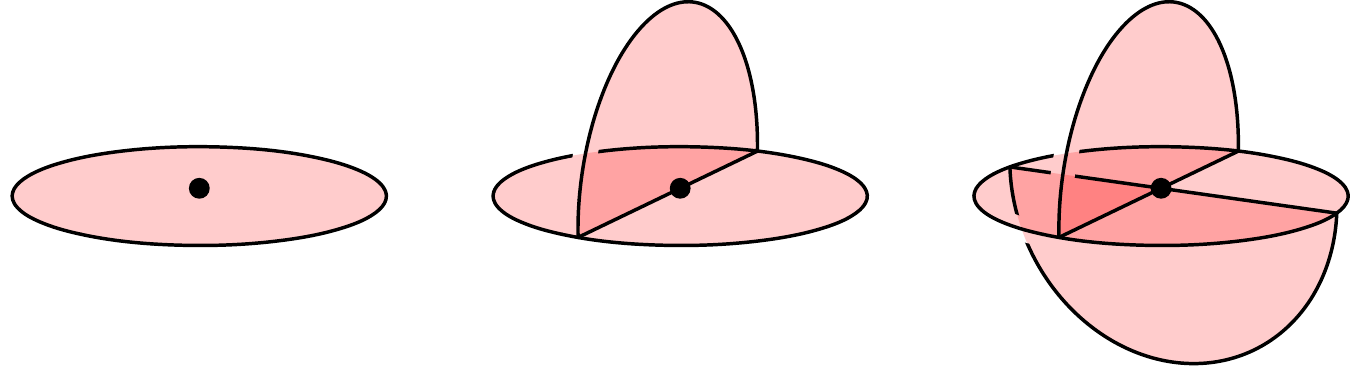}). A connected component of the set of all non-singular points is called a \emph{2--stratum}; a connected component of the set of all triple points is called a \emph{1--stratum}. A simple spine is \emph{special} if each  \emph{2--stratum} is an open 2--cell and each  \emph{1--stratum} is an open 1--cell. 

\begin{figure}[htbp]
\centering
\includegraphics[width=0.8\textwidth]{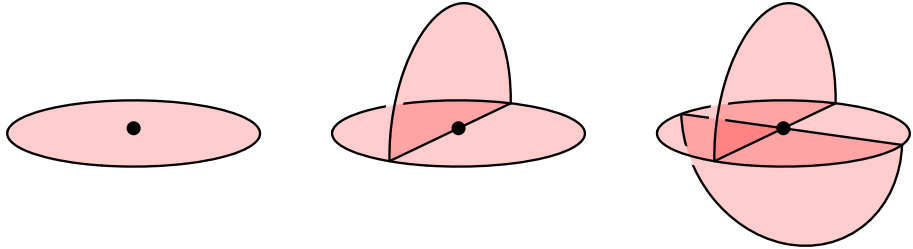}
\caption{From left to right: a non-singular point, a triple point, and a true vertex.}
\label{point_types_on_spine.pdf}
\end{figure}

In our existence proofs for triangulations with certain properties, we will often first construct a special spine and then a triangulation dual to the spine. A general theory for constructing triangulations dual to \emph{thickenable special polyhedra} is given in \cite{Mat}, \S1.1.5. In our setting, we start with a simple spine of a manifold, so know from the outset that we have a thickenable polyhedron, and only need to show that it is special in order to justify the existence of a dual triangulation. As pointed out in \cite{Mat} (Remark 1.1.11), given a simple polyhedron with at least one true vertex, the condition on the 1--strata for a special spine follows from the condition on the 2--strata. 


 \section{Essential and parallel edges}
\label{sec:Essential and strongly essential}
 
The purpose of this section is to establish the definitions of essential and strongly essential triangulations of $3$--manifolds. We are interested in two distinct cases in this paper: 1--vertex triangulations of a closed pseudo-manifold $\widehat{M}$ (which will generally be a closed manifold), and ideal triangulations of a topologically finite manifold $M$ with any number of ideal vertices. 
This section continues in the notation of \S\ref{sec:Preliminaries}, and we will always assume that $\widehat{M}$ is triangulated and $M = \widehat{M}\setminus \widehat{M}^{(0)}$ is ideally triangulated.


\subsection{Closed pseudo-manifolds}
\label{sec:Essential and strongly essential--closed}

The definitions for a closed pseudo-manifold $\widehat{M}$ are straight forward:

\begin{Def}[Essential and parallel edges]
An edge loop in $\widehat{M}$ is \emph{essential}  if it is not null-homotopic. The edge paths $\gamma,$ $\delta$ in $\widehat{M}$ are \emph{parallel} if there is a path homotopy between $\gamma$ and one of $\pm \delta.$\\
\end{Def}

\begin{Def}[Essential triangulation]
The triangulation of $\widehat{M}$ is \emph{essential} if it has exactly one vertex and every edge loop in $\widehat{M}$ is essential. 
It is \emph{strongly essential} if, in addition, no two edge paths are parallel. 
\end{Def}

In the case of an ideal triangulation, we will restrict the allowable path homotopies. For instance, Thurston's ideal triangulation of the figure eight knot complement $M$ yields a simply connected pseudo-manifold $\widehat{M}$ since the fundamental group is normally generated by meridians, and hence the triangulation of $\widehat{M}$ is not essential (and hence not strongly essential). 
However, the \emph{geometry} of the figure eight knot complement begs for a definition that renders this triangulation strongly essential. 

We will first give a definition that displays the similarity between the ideal case and the closed case, and then give equivalent conditions in terms of the compact core, which are easier to verify in practice.


\subsection{Ideal triangulations via pseudo-manifolds}
\label{sec:Essential and strongly essential--ideal}

The key to our definition in the ideal case is that intermediate paths in the homotopy are not allowed to pass through the ideal vertices, except for their endpoints. The reader is reminded than an ideal edge of $M$ is an edge of $\widehat{M}.$

\begin{Def}[Admissible path homotopy]
A path homotopy\footnote{i.e. a homotopy keeping endpoints fixed}
 $H\co [0,1] \times [0,1]\to \widehat{M}$ between two edge paths is \emph{admissible} if $H(\; (0,1)\times [0,1]\;) \subset M,$ where the first factor parametrises the paths. The edge path $\gamma$ is \emph{admissibly null-homotopic} if there is a path homotopy $H\co [0,1] \times [0,1]\to \widehat{M}$ with $H(x, 0) = \gamma(x),$ $H(x, 1) = \gamma(0)$ for all $x\in [0,1]$ and $H(\; (0,1)\times [0,1)\;) \subset M.$
\end{Def}

\begin{Def}[Essential and parallel ideal edges]
An ideal edge of $M$ is \emph{essential} it is not admissibly null-homotopic in $\widehat{M}$. Two ideal edges $e,$ $f$ of $M$ are \emph{parallel} if there is an admissible path homotopy between $\gamma_e$ and one of $\pm \gamma_f.$
\end{Def}

\begin{Def}[Essential ideal triangulation]
The ideal triangulation of $M$ is \emph{essential} if every ideal edge of ${M}$ is essential.
It is \emph{strongly essential} if, in addition, no two ideal edges of $M$ are parallel.
\end{Def}

We remark that whilst the definition of an essential triangulation asks for exactly one vertex, the corresponding ``minimality" condition is built into the definition of an ideal triangulation---having one ideal vertex for each end of $M.$


\subsection{Ideal triangulations via compact core}

We now give equivalent
conditions using the intersection of edges with the compact core $M^c$ of $M$. This alternative viewpoint for essential edges can be found in \cite{segerman_tillmann_11}.

\begin{Lem}
An ideal edge $e$ of $M$ is essential if and only if there is no path homotopy $H \co [0,1] \times [0,1] \to M^c$ between $\gamma_e \cap M^c$  (suitably parametrised) and a path in $\partial M^c.$ 
\end{Lem}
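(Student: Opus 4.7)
The plan is to prove the two implications separately, and to reduce at once to the case where $\gamma_e$ is a loop at a single ideal vertex $v_0$. If instead the two ends of $e$ lie at distinct ideal vertices, both conditions of the lemma are vacuously satisfied: $\gamma_e$ is not a loop, so it cannot be admissibly null-homotopic, while the endpoints of $\gamma_e \cap M^c$ lie on distinct components of $\partial M^c$, so no path in $\partial M^c$ has the correct endpoints to serve as the target of a path homotopy rel endpoints. In the loop case, write $N_0$ for the open star of $v_0$ in the second barycentric subdivision, $\partial_0 M^c := \partial\overline{N_0}$, and pick $0 < a < b < 1$ so that $\gamma_e([0,a])$ and $\gamma_e([b,1])$ lie in $\overline{N_0}$ and $\alpha := \gamma_e|_{[a,b]} = \gamma_e \cap M^c$ has endpoints on $\partial_0 M^c$.

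For the direction that a path homotopy in $M^c$ implies an admissible null-homotopy, I would take $K \co [0,1]^2 \to M^c$ with $K(\cdot,0) = \alpha$ and $K(\cdot,1) = \beta$ a path in $\partial_0 M^c$, and use it to slide the middle portion of $\gamma_e$: set $\eta_t := \gamma_e|_{[0,a]} \cdot K(\cdot,t) \cdot \gamma_e|_{[b,1]}$, obtaining a continuous family of loops at $v_0$ that pass through $v_0$ only at their endpoints. The terminal loop $\eta_1$ lies entirely in the closed star $\overline{N_0}$, which is combinatorially a cone on $\partial_0 M^c$ with apex $v_0$; the radial contraction in this cone provides an admissible homotopy from $\eta_1$ to the constant loop at $v_0$. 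Concatenating the two stages yields the required admissible null-homotopy.

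The substantive direction is the converse. Given an admissible null-homotopy $H \co [0,1]^2 \to \widehat M$ of $\gamma_e$, the plan is a transversality-and-retraction argument. Write $T = (\{0,1\} \times [0,1]) \cup ([0,1] \times \{1\})$ for the three sides of the square on which $H \equiv v_0$. First perturb $H$ slightly on $(0,1) \times (0,1)$ so that it becomes transverse to $\partial_0 M^c$; this preserves admissibility because $H$ is unchanged on $\partial [0,1]^2$, and on $(0,1) \times [0,1)$ the map already lands in the manifold $M$. Because $H(T) = \{v_0\} \not\subseteq \partial_0 M^c$ and $\gamma_e$ meets $\partial_0 M^c$ on the bottom edge only at $\{a,b\}$, the preimage $H^{-1}(\partial_0 M^c)$ is then a properly embedded $1$--manifold with boundary exactly $\{(a,0),(b,0)\}$; after discarding closed components, a single arc $\Gamma$ connects these two points through the interior of the square.

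Finally, the loop $\Gamma \cup ([a,b] \times \{0\})$ bounds a closed disk $D \subset [0,1]^2$. Since $\Gamma$ meets $\partial[0,1]^2$ only at its endpoints $(a,0), (b,0) \in (0,1) \times \{0\}$, the disk $D$ is disjoint from $T$, so $D \subset (0,1) \times [0,1)$ and $H(D) \subset M$. Composing with the deformation retraction $r \co M \to M^c$ produces a continuous map $r \circ H|_D \co D \to M^c$ whose restriction to $[a,b] \times \{0\}$ is $\alpha$ and whose restriction to $\Gamma$ is a path $\beta$ in $\partial_0 M^c$ from $\alpha(0)$ to $\alpha(1)$; reparametrizing $D$ as $[0,1]^2$ yields the required path homotopy in $M^c$. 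The step I expect to be the main obstacle is the transversality perturbation: it must be supported strictly in the interior of $[0,1]^2$ in order to preserve admissibility, but because $H$ already lands in the manifold $M$ on a neighbourhood of $H^{-1}(\partial_0 M^c)$, this should reduce to a routine PL general position argument.
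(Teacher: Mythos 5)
Your proof is correct and follows essentially the same strategy as the paper: the forward direction extends the given homotopy in $M^c$ by coning/contracting in the vertex neighbourhood, and the converse uses transversality of $H$ to $\partial M^c$ to extract a disc $D$ bounded by an arc of the preimage and a segment of the bottom edge, then composes $H|_D$ with the deformation retraction $r \co M \to M^c$. The reduction to the loop case and the explicit justification that the transversality perturbation preserves admissibility are slightly more detailed than in the paper, but do not change the argument.
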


\begin{proof}
We write $\gamma= \gamma_e.$ Suppose there is a path homotopy $H \co [0,1] \times [0,1] \to M^c$ between $\gamma \cap M^c$ and a path $\beta$ in $\partial M^c.$ Coning the endpoints of $\beta$ to the vertex corresponding to the component of  $\partial M^c$ containing $\beta$ gives a loop $\beta'$ in $\widehat{M},$ which is admissibly null-homotopic. The path homotopy between $\gamma \cap M^c$ and $\beta$ extends to an admissible path homotopy between $\gamma$ and $\beta',$ hence showing that $\gamma$ is admissibly null-homotopic.

Conversely, suppose that $\widehat{H} \co [0,1] \times [0,1] \to  \widehat{M}$ is an admissible path homotopy between $\gamma$  and a constant loop at a vertex of $\widehat{M}$.  This restricts to a homotopy $H: E =(0,1) \times [0,1) \to M$ and we can assume that $H$  is transverse to the surface $\bd M^c \subset M$.  Then $H^{-1}(\bd M^c)$ is a compact, properly embedded 1-submanifold of  $E$,  and $H^{-1}(\bd M^c) \cap \bd E$ consists of the two points $(a,0), (b,0)$ with $\gamma(a), \gamma(b) \in \bd M^c$.  Let $\alpha$ be the arc in $H^{-1}(\bd M^c)$ joining these two points, 
and let $D$ be the disc in $E$ bounded by $\alpha$ and the line segment in $\bd E$ between these points.  Then the restriction $H |_D : D \to M$ gives a
path homotopy in $M$ between $\gamma \cap M^c$ and a path in $\bd M^c$.  Composing this with a deformation retraction $r : M \to M^c$, defined using the product structure on the ends of $M$, gives a path homotopy in $M^c$ between
$\gamma \cap M^c$ and a path in $\bd M^c$.  
\end{proof}

\begin{Lem}\label{lem:parallel ideal edges in core}
Two ideal edges $e$ and $f$ of $M$ are parallel if and only if there is a homotopy $H \co [0,1] \times [0,1] \to M^c$ between $\gamma_e \cap M^c$ and $\gamma_f \cap M^c$ (suitably parametrised) with the property that 
$H(0, t),$ $H(1, t) \in \partial M^c$ for all $t\in [0,1],$ i.e.\thinspace the endpoints of all intermediate paths remain on the boundary.
\end{Lem}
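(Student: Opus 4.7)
The plan is to mirror the structure of the preceding lemma, using the same two ingredients — transversality against $\bd M^c$ and cone-extension across the ends of $M$ — but now tracking the full images of the two vertical sides of the parametrising square rather than collapsing one side to a single vertex.

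For the reverse direction, suppose $H\co [0,1]\times[0,1]\to M^c$ is given with $H(0,t),H(1,t)\in\bd M^c$ for all $t$. Each of the paths $t\mapsto H(i,t)$ is a continuous arc in $\bd M^c$, so lies in a single boundary component of $M^c$; the component on the left corresponds to one ideal vertex of $\widehat M$, and the component on the right to another, and these must be the endpoints of $\gamma_e$ (and, as must then agree, of $\pm\gamma_f$). Using the product structure $\bd M^c\times [0,1)$ on each end of $M$, extend $H$ by coning the left-hand path to its ideal vertex along the radial rays in one end, and similarly on the right. The resulting map $\widehat H\co [0,1]\times[0,1]\to\widehat M$ is continuous, restricts to $\gamma_e$ on the bottom and $\pm\gamma_f$ on the top, sends the two vertical sides constantly to the ideal vertices, and sends the open part $(0,1)\times[0,1]$ into $M$. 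This is precisely an admissible path homotopy, so $e$ and $f$ are parallel.

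For the forward direction, let $\widehat H\co[0,1]\times[0,1]\to\widehat M$ be an admissible path homotopy between $\gamma_e$ and $\pm\gamma_f$, and let $E=(0,1)\times[0,1]$. Perturb $\widehat H|_E$ to be transverse to the properly embedded surface $\bd M^c\subset M$. Because continuity of $\widehat H$ on the closed square forces a whole collar of each vertical side to map deep into one of the ends of $M$, the preimage $\widehat H^{-1}(\bd M^c)\cap E$ is a compact properly embedded 1-submanifold of $E$ whose endpoints are precisely the four crossings $(a_e,0),(b_e,0),(a_f,1),(b_f,1)$ of $\gamma_e,\gamma_f$ with $\bd M^c$. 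The preimage of $M^c$ contains the two middle segments $[a_e,b_e]\times\{0\}$ and $[a_f,b_f]\times\{1\}$ and stays away from the vertical sides, so the component of this preimage meeting the bottom segment is a disc $D\subset E$ whose boundary alternates between the two horizontal segments and two preimage arcs running from $(a_e,0)$ to one of the upper endpoints and from the other upper endpoint back to $(b_e,0)$. Reparametrising $D$ as $[0,1]\times[0,1]$ and composing $\widehat H|_D$ with a deformation retraction $r\co M\to M^c$ built from the product structure on the ends produces the desired homotopy in $M^c$ between $\gamma_e\cap M^c$ and $\gamma_f\cap M^c$ with endpoints on $\bd M^c$ throughout. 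The main technical obstacle here is ruling out or handling the degenerate pairing in which a preimage arc joins $(a_e,0)$ to $(b_e,0)$; this forces $\gamma_e\cap M^c$ to be homotopic rel endpoints to a path in $\bd M^c$, and the required homotopy to $\gamma_f\cap M^c$ must then be assembled by concatenating the disc component above with an auxiliary homotopy along $\bd M^c$, and similarly for $\gamma_f$. One must also verify that the orientation of $D$ inherited from $E$ is compatible with the sign choice $\pm\gamma_f$ built into the definition of parallel edges.
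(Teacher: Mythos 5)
Your proposal is correct and follows essentially the same strategy as the paper's (brief) argument: cone extension over the ends of $\widehat M$ for the reverse direction, and a transversality argument against the properly embedded surface $\bd M^c$ (mirroring the proof of the preceding lemma) followed by composition with the product-structure retraction $r\co M\to M^c$ for the forward direction. Your version supplies more detail than the paper does, in particular by explicitly isolating the degenerate pairing of preimage arcs. This case genuinely can occur --- it forces the two ideal endpoints to coincide and both $\gamma_e\cap M^c,\gamma_f\cap M^c$ to be homotopic rel endpoints into $\bd M^c$, whence the required homotopy is obtained by concatenating the two boundary-compressions with an intermediate homotopy inside the (connected) boundary component --- so it needs handling, and your outline of how to do so is sound. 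Two small remarks: (i) one should first remove closed components of $\widehat H^{-1}(\bd M^c)$ by pushing across innermost discs before claiming the relevant component of $\widehat H^{-1}(M^c)$ is a disc; (ii) your final concern about orientation compatibility is not actually an issue, since the only other conceivable pairing of the four preimage endpoints (cross pairing) is impossible in the square, and the $\pm$ sign is absorbed by the ``suitably parametrised'' clause of the lemma.
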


\begin{proof}
Suppose $\gamma=\gamma_e$ and $\delta=\gamma_f$ are edge paths in $\widehat{M}$ with the property that they have the same initial vertex $v$ and the same terminal vertex $w.$ If $\gamma$ and $\delta$ are related by an admissible path homotopy of $\widehat{M},$ then using the conical structure of the ends, one can construct a homotopy with image in $M^c$ taking $\gamma \cap M^c$ to $\delta \cap M^c$ whilst keeping the endpoints of all intermediate paths on the boundary. Conversely, if there is such a homotopy of $M^c$ taking $\gamma \cap M^c$ to $\delta \cap M^c,$ then coning the resulting arcs on $\partial M^c$ to the vertices gives an admissible homotopy of $\widehat{M}$ taking $\gamma$ to $\delta.$ 
\end{proof}

As a final remark, we emphasise that ``essential" and ``strongly essential" apply to the edges of $\widehat{M}$ in two different ways---depending on whether they are considered as edges of $\widehat{M}$ or ideal edges of $M.$ We hope that this will not cause any confusion, and we regret that in \cite{segerman_tillmann_11} ``essential in $\widehat{M}$" stands for ``essential in $M.$"

 
\section{Heegaard splittings} 
 
Our first construction takes as its launching point an irreducible one-sided Heegaard splitting:  
 
\begin{Thm}
Assume that $M$ is a closed $P^2$-irreducible 3--manifold $M$ with the property that $H_1(M, \Z_2) \ne 0$ and $M \ne \R P^3$. Then $M$ admits an essential one-vertex triangulation. 

In particular, any closed non-orientable $P^2$-irreducible 3--manifold has such a triangulation. 
\end{Thm}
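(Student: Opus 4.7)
The plan is to use a one-sided Heegaard splitting of $M$ to build a special spine of $M$ minus a ball, dualize to a one-vertex triangulation, and verify essentiality by combining the non-triviality of $[S]\in H_2(M;\Z_2)$ with an innermost-disk argument.

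By Rubinstein's theorem on one-sided Heegaard splittings, the hypotheses on $M$ yield an irreducible one-sided Heegaard splitting $M = U(S)\cup_{\partial U(S)} H$, where $S$ is a closed non-orientable surface with $[S]\neq 0$ in $H_2(M;\Z_2)$, $U(S)$ is a twisted $I$-bundle over $S$, and $H$ is an orientable handlebody. Since $M\neq \R P^3$, we have $S\neq \R P^2$, so $H$ has positive genus $g\geq 1$. A key point for later is that the double covering projection $\partial U(S)\to S$ is $\pi_1$-injective, so any essential simple closed curve on $\partial U(S)$ -- in particular any meridian of $H$ -- projects to an essential loop on $S$.

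Next I construct a special spine $P$. Pick a complete meridian disk system $D_1,\ldots,D_g\subset H$, after isotopy chosen so that each $\partial D_i$ projects to an embedded simple closed curve $c_i\subset S$ (avoiding configurations invariant under the deck involution). Using the $I$-bundle structure of $U(S)$, extend each $D_i$ by a vertical annulus $A_i\subset U(S)$ running from $\partial D_i$ to $c_i$, yielding a properly embedded disk $D'_i := D_i\cup A_i\subset M$ with $\partial D'_i = c_i$. Augment $\bigcup c_i$ by further essential curves on $S$ if necessary so that $S\setminus \bigcup c_i$ is a disjoint union of open $2$-cells, and set $P := S\cup D'_1\cup\cdots\cup D'_g$. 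One verifies that $M\setminus P$ is a single open $3$-ball, and that after a generic perturbation $P$ is a simple spine with at least one true vertex (a transverse intersection $c_i\cap c_j$), hence special by \cite{Mat}, Remark~1.1.11. Dualize via \cite{Mat}, \S1.1.5 to obtain a one-vertex triangulation $\tri$ of $M$.

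Each edge $\epsilon$ of $\tri$ is a loop at the unique vertex crossing exactly one $2$-stratum of $P$ transversally in a single point. If that $2$-stratum lies in $S$, then $\epsilon$ has odd mod-$2$ intersection with $S$; since $[S]$ is Poincar\'e-dual to a non-trivial class in $H^1(M;\Z_2)$, it follows that $[\epsilon]\in H_1(M;\Z_2)$ is non-zero and $\epsilon$ is essential. If the $2$-stratum lies in some $D'_i$, then $\epsilon$ crosses $D'_i$ once and is disjoint from $S$. Suppose for contradiction that $\epsilon$ bounds a disk $\Delta$ in $M$ and place $\Delta$ in general position with $S$ and $D'_i$; a count of boundary points of the $1$-manifold $\Delta\cap D'_i$ shows that $|\Delta\cap c_i|$ is odd. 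An innermost-disk analysis of $\Delta\cap S$, combined with $P^2$-irreducibility, should then reduce $\Delta$ to a form in which some innermost subdisk has boundary null-homotopic on $S$, contradicting odd mod-$2$ intersection with the essential curve $c_i$.

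The main obstacle is executing this last innermost-disk reduction cleanly, and in particular handling innermost subdisks whose lifted boundaries in $\partial H$ are essential (i.e.\ which correspond to compressing disks of $H$). Here one needs to use the irreducibility of the Heegaard splitting together with $P^2$-irreducibility to either isotope $\Delta$ to eliminate such subdisks, or to derive a direct parity contradiction with $|\Delta\cap c_i|$ being odd; possibly also arranging in advance that the meridian system is chosen compatibly with the deck involution so that these reductions behave well. I expect the bulk of the technical content of the proof to lie in this step.
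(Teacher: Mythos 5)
Your construction of the spine and the treatment of edges dual to 2-strata on $S$ via Poincar\'e duality both parallel the paper's argument.  However, there is a genuine gap in how you handle the edges dual to the disks $D_i'$, and it is not merely a technical obstacle in the innermost-disk reduction: the statement you are trying to prove there is \emph{false} for an arbitrary meridian disk system.  An edge dual to $D_i'$ (crossing it once, disjoint from $S$) is freely homotopic to a core circle of the $i$-th handle of $H$, and there is no reason an arbitrarily chosen core circle must be essential in $\pi_1(M)$ --- it can perfectly well be null-homotopic while $[S]$ remains nontrivial.  So no amount of innermost-disk gymnastics can establish essentiality; you have to \emph{choose} the meridian system correctly first.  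The paper does exactly this: before picking the meridian disks, it first chooses a bouquet of circles $\Gamma$ spining $Y$ and performs \emph{handle slides} so that every circle of $\Gamma$ represents a nontrivial element of $\pi_1(M)$ (this is possible because the image of $\pi_1(Y)$ in $\pi_1(M)$ is strictly larger than $\Z_2$, using $M\ne\R P^3$); the meridian disks are then taken dual to $\Gamma$.  After this normalization, the essentiality of the disk-dual edges is immediate, with no innermost-disk argument needed.

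A secondary point: your step ``augment $\bigcup c_i$ by further essential curves on $S$ so that the complement is a union of $2$-cells'' is not how the paper handles the 2-strata on $K$, and as written it is a bit incoherent --- you cannot simply add loose curves to a spine without attaching corresponding disks, or the complex is no longer a spine of the right kind.  The paper instead \emph{proves} that every 2-stratum of $X$ contained in $K$ is already an open 2-cell, arguing by contradiction: an essential simple closed curve $C$ in a non-disk 2-stratum is disjoint from all the $\partial D_i$, so its lift(s) to $\partial Y$ bound disks in $Y$, and depending on whether $C$ is 1- or 2-sided in $N(K)$ one produces a 2-sphere or a 2-sided $\R P^2$ meeting $K$ in a single essential curve --- contradicting the irreducibility of the splitting or $P^2$-irreducibility of $M$.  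This argument is where the irreducibility of the one-sided Heegaard splitting actually earns its keep, and it avoids the need to modify the curve system.
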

 
\begin{proof}
We first claim that $M$ admits an irreducible one-sided Heegaard splitting. We know from \cite{rub78} that $M$ admits a one-sided Heegaard splitting $M= N(K) \cup Y$, where $Y$ is a handlebody and $N(K)$ is a regular neighbourhood of a one-sided surface. (In \cite{rub78}, the focus is on orientable 3--manifolds, but the same argument works in the non-orientable case.) To prove the existence of an irreducible one-sided Heegaard splitting, consider a one-sided Heegaard splitting of smallest genus and suppose that it is not irreducible. Then there is a 2--sphere $S$ meeting the Heegaard surface $K$ in a single essential loop on $K$. Moreover $S$ bounds a 3--ball $B$ in $M$ and the splitting $K \cap B$ of $B$ consists of trivial handles. We can then replace the splitting surface $K$ by $K'=(K \setminus K \cap B) \cup D,$ where $D$ is a disk bounded by $K \cap S$ on $S$. Then $K'$ gives a smaller genus one-sided splitting, a contradiction. 

Hence assume that $M= N(K) \cup Y$ is an irreducible one-sided Heegaard splitting of $M.$ We next note that there is no 2--sphere or 2--sided projective plane meeting $K$ in a single essential curve. Indeed, since $M$ is $P^2$-irreducible there is no such 2--sided projective plane in $M$, and since the Heegaard splitting is irreducible, there is no such sphere.
 
We choose a bouquet of circles $\Gamma$ which is a spine of $Y$ and has the property that every circle is essential in $M.$ To achieve this, we first choose an appropriate free generating set for $\pi_1(Y)$. This is non-empty so long as $\pi_1(M)$ is strictly larger than $\Z_2.$ However, if $\pi_1(M) \cong \Z_2,$ then $M$ is homotopy equivalent to $\R P^3,$ and Perelman's proof of the Geometrisation Theorem implies that $M$ is homeomorphic to  $\R P^3,$ contradicting our hypothesis. Consequently it follows that $\pi_1(M)$ is strictly larger than $\Z_2,$ and so $\Gamma$ is non-empty but some circles may be inessential. To achieve that the core of every one-handle is essential, we perform handle slides in the handlebody $Y$, using the fact that the image of $\pi_1(Y)$ is a non trivial subgroup of $\pi_1(M).$ This constructs the graph $\Gamma$.
 
Next, we pick a non-separating family of meridian discs $D_1, \dots D_k$ for $Y$ dual to each of the $k$ circles of $\Gamma$. Each boundary curve $C_i = \partial D_i$ is a component of the boundary of an annulus $A_i$ or a M\"obius band $B_i$ in $N(K)$. The annulus $A_i$ satisfies $\partial A_i = C_i \cup C_i^\prime$ where $C_i^\prime =A_i \cap K$. The M\"obius band $B_i$ meets $K$ in an essential (possibly singular) orientation-reversing curve and $B_i \cup D_i$ is an immersed 2-sided projective plane. The projective plane theorem in \cite{Ep} allows us to replace this immersed 2-sided projective plane by an embedded one. But as we are assuming that $M$ is $P^2$-irreducible, there is no such projective plane in $M.$ Note that $A_i$ is embedded except that $C_i^\prime$ can have isolated double points. We now form the 2-complex $X = K \cup D_1 \cup A_1 \cup \dots D_k \cup A_k$ and claim this is a special spine of $M$. See Figure \ref{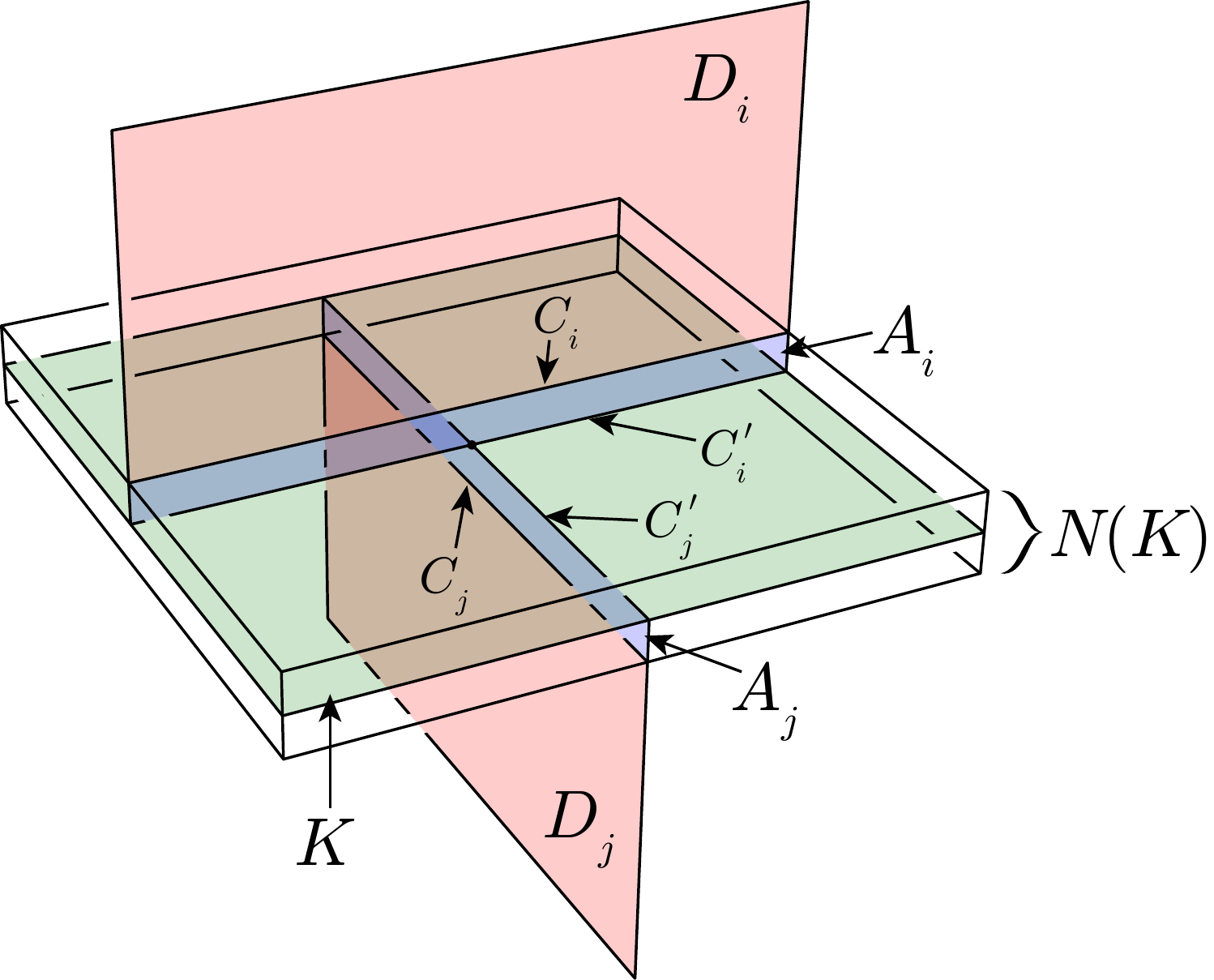}.

\begin{figure}[htbp]
\centering
\includegraphics[width=0.45\textwidth]{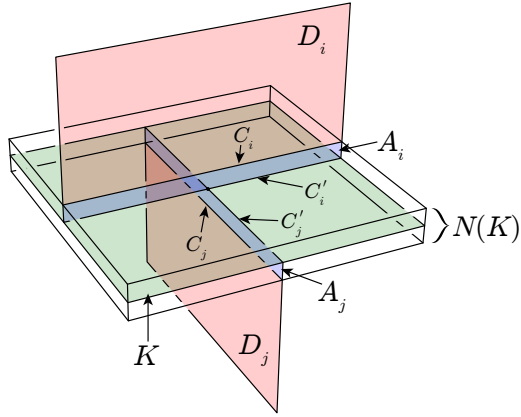}
\caption{Two discs $D_i$ and $D_j$ with their associated annuli $A_i$ and $A_j$ near the one-sided Heegaard surface $K$. Here, $i$ may be equal to $j$.}
\label{one_sided_heegard_hierarchy.pdf}
\end{figure}

It is clear that $M \setminus X$ is a 3--cell, and hence $X$ is a spine of $M.$ Moreover, we may assume that the curves $C_i^\prime$ on $K$ meet transversely, and hence the spine $X$ is simple. If $X$ has no true vertex, i.e.\thinspace no two curves $C_i^\prime$ and $C_j^\prime$ on $K$ meet, we may perform a \emph{lune move} (see Figure~\ref{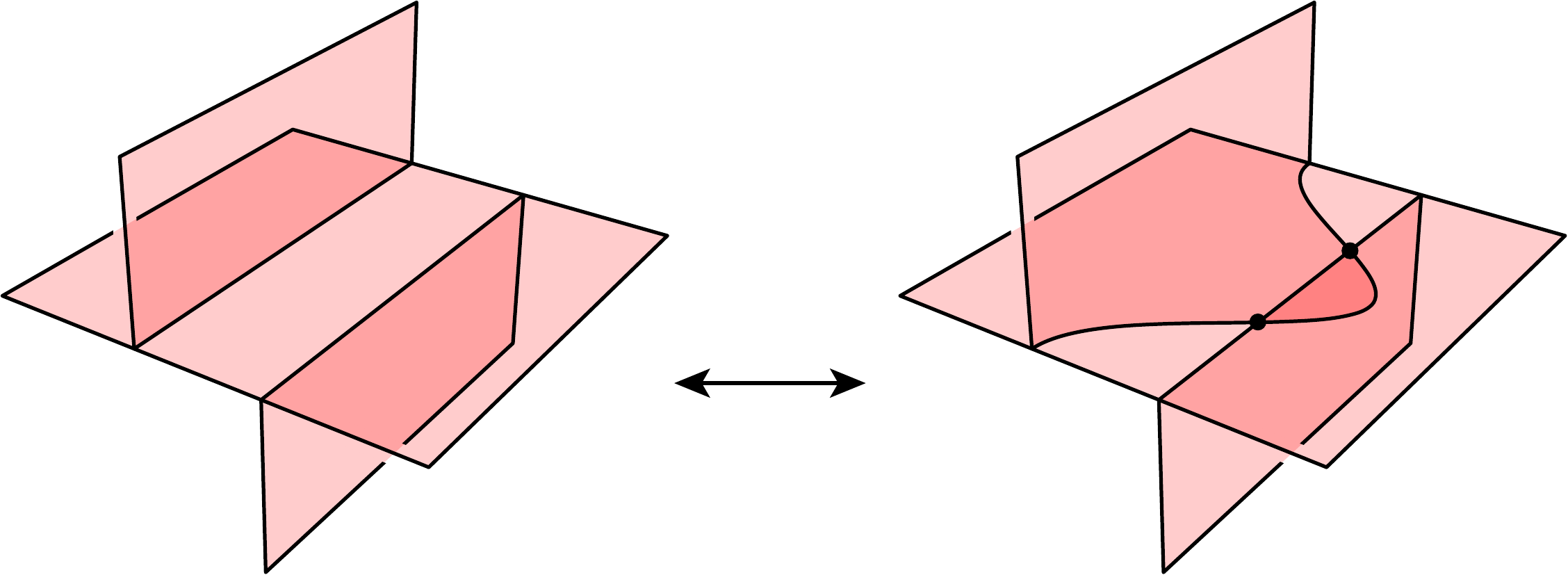} and \cite{Mat}, \S1.2) to arrange for the existence of true vertices. Denote the resulting simple spine 
with at least one true vertex again by $X.$ We wish to show that $X$ is special. As discussed in \S\ref{sec:spine}, by \cite{Mat} (Remark 1.1.11) it now remains to show that the 2--strata of $X$ are open 2--cells.

\begin{figure}[htbp]
\centering
\includegraphics[width=0.7\textwidth]{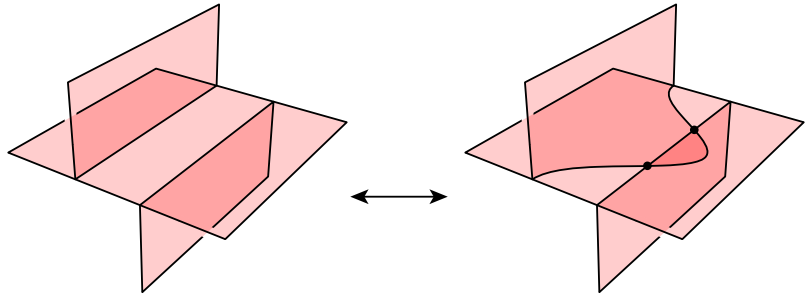}
\caption{The lune move introduces two true vertices.}
\label{lune_move.pdf}
\end{figure}

Since the interior of each $A_i\cup D_i$ is a disc, we need to show that each 2--stratum on $K$ is also a 2--cell. For the sake of contradiction, suppose that we can find a 2--stratum $F$ on $K$, which is not a disc. Choose an essential simple closed curve $C$ in $F$. Assume first that $C$ is 2-sided in $N(K),$ i.e.\thinspace that the lift to $\partial Y$ under the natural map $\partial Y \to K$ has two components. Denote these components by $C_+$ and $C_-$. An annulus $A$ properly embedded in $N(K)$ can be found with $\partial A = C_+ \cup C_-$ and $A \cap K =C$. Since $C \subset F$, it follows that $C_+, C_-$ are both disjoint from $D_1, \dots D_k$. Hence $C_+, C_-$ bound disjoint discs $D_+, D_-$ properly embedded in $Y$. We obtain an embedded 2-sphere $S=D_+ \cup A \cup D_-$ intersecting $K$ in a single essential curve, contrary to assumption. If $C$ is one-sided in $N(K)$ then there is a M\"obius band $B$ properly embedded in $N(K)$ with $B \cap K = C$. Then $\partial B$ bounds a properly embedded disc $D$ in $Y$ and $B \cup D$ is a 2-sided projective plane, contrary to assumption. 

As in \cite{Mat} \S1.1.5, there is a triangulation $\mathcal T$ of $M$ dual to $X$ with precisely one vertex. We claim that this triangulation is essential. The edge loops are either dual to the discs $D_1, \dots D_k$ (and hence homotopic to the circles of $\Gamma$) or they are dual to faces $F$ of $K$ in $X$ (and hence homologically non-trivial). In either case they are homotopically non-trivial and therefore essential. This completes the proof of the main statement. 

The last sentence follows since, as is well-known, $H_1(M, \Z_2) \ne 0$ for any closed non-orientable 3--manifold. 
\end{proof}
   
\begin{Exa}[Quaternionic space]
The minimal two tetrahedron triangulation of quaternionic space $S^3/Q_8$ (see Figure~2(d) in \cite{JRT}) illustrates the construction in the above proof. Here, the incompressible one-sided Heegaard surface is a Klein bottle $K,$ and sits as the union of two normal quadrilaterals in the triangulation. There is precisely one edge in the complement of $K,$ corresponding to the single compression disc $D,$ and there are two more edges in the triangulation arising from the components of $K \setminus D.$
\end{Exa}   
   
We next give a variation on the above construction, which applies to the interior of a compact orientable irreducible atoroidal 3--manifold $M$, which has boundary consisting of two incompressible tori. 
 
 \begin{Thm}
Suppose $M$ is a compact orientable irreducible anannular 3--manifold with boundary having two tori components which are incompressible. Also assume that $M$ is not homeomorphic to $T^2 \times [0,1]$. 
Then there is an essential ideal triangulation $\mathcal T$, which is dual to a spine $X$ of $M$ coming from a compression body decomposition. 
 \end{Thm}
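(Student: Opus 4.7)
The plan is to mirror the preceding one-sided Heegaard splitting construction, using instead a 2-sided Heegaard splitting whose two compression bodies each carry one torus boundary component. First, I would establish that $M$ admits a Heegaard splitting $M = H_1 \cup_S H_2$ in which each $H_i$ is a compression body with inner boundary $\partial_- H_i = T_i$; such splittings exist by standard arguments for compact 3-manifolds with boundary. Destabilising as in the one-sided case, one may assume the splitting is irreducible, meaning no 2-sphere in $M$ meets $S$ in a single essential loop on $S$. The hypothesis $M \not\cong T^2 \times [0,1]$ forces $S$ to have genus at least two, so each $H_i$ has at least one 1-handle. Choose a complete system of essential meridian disks $\{D_j^{(i)}\}$ for each $H_i$ (so that cutting $H_i$ along them yields $T_i \times [0,1]$), and form
\[
X \;=\; S \;\cup\; \bigcup_{i,j} D_j^{(i)}.
\]
The complement $M \setminus X$ is the disjoint union of open collars of $T_1$ and $T_2$, so $X$ is a spine of the compact core $M^c$. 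Performing lune moves if necessary to introduce true vertices, $X$ becomes a simple spine, and the dual ideal triangulation $\mathcal T$ exists provided $X$ is special.

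The main step is to check that every 2-stratum of $X$ on $S$ is an open disk (the meridian-disk strata are clearly disks). Suppose, for contradiction, that a 2-stratum $F \subset S$ is not a disk; then $F$ contains an essential simple closed curve $C$ on $S$ disjoint from every meridian curve $\partial D_j^{(i)}$. Inside each $H_i$, the product structure on $H_i$ cut along the meridian disks provides an embedded annulus $A_i \subset H_i$ realising an isotopy of $C$ onto a simple closed curve $C_i \subset T_i$. The union $A = A_1 \cup A_2$ is a properly embedded annulus in $M$ with boundary components $C_1 \subset T_1$ and $C_2 \subset T_2$. If both $C_1$ and $C_2$ are essential on their tori, then $A$ is incompressible (since essential curves on the incompressible tori do not bound disks in $M$) and $\partial$-incompressible (since $\partial M = T_1 \sqcup T_2$ is disconnected, no $\partial$-compressing disk can span the two ends of $A$); hence $A$ is essential, contradicting the anannularity of $M$. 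If exactly one, say $C_1$, is inessential on $T_1$, then $C_1$ bounds a disk $D_1' \subset T_1$, the union $A_1 \cup D_1'$ is an embedded disk in $H_1$ with boundary $C$, and gluing this disk along $C$ to $A_2$ produces a properly embedded disk in $M$ with boundary $C_2$ essential on $T_2$, contradicting the incompressibility of $T_2$. If both $C_1$ and $C_2$ are inessential, analogous disks $E_i \subset H_i \cup T_i$ bounded by $C$ can be constructed; pushing their boundary arcs on the tori slightly into $\Int(M)$ assembles $E_1 \cup E_2$ into an embedded 2-sphere in $\Int(M)$ meeting $S$ transversely in a curve isotopic to $C$, contradicting the irreducibility of the Heegaard splitting. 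In every case we reach a contradiction, so every 2-stratum on $S$ is a disk.

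Finally, to verify that $\mathcal T$ is essential, consider the two types of ideal edges. An edge dual to a 2-stratum on $S$ connects the two distinct ideal vertices corresponding to $T_1$ and $T_2$, so it is not a loop and is therefore vacuously essential. An edge dual to a meridian disk $D_j^{(i)}$ is a loop at the ideal vertex of $T_i$ and crosses $D_j^{(i)}$ transversely in a single point. Since the properly embedded disk $D_j^{(i)}$ has boundary on $S$ and interior disjoint from $T_i$, every arc on $T_i$ has zero mod-2 intersection number with $D_j^{(i)}$. A path homotopy in $M^c$ taking this loop to an arc on $T_i$ would preserve the mod-2 intersection number with $D_j^{(i)}$, which is impossible. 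By the characterisation of essential edges via the compact core given in Section~\ref{sec:Essential and strongly essential--ideal}, the loop is essential, so $\mathcal T$ is an essential ideal triangulation.

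The principal obstacle is the case analysis in the second paragraph, where anannularity, incompressibility of the boundary tori, and irreducibility of the Heegaard splitting must be combined in concert to exclude non-disk 2-strata on $S$; the rest of the argument is a fairly direct adaptation of the one-sided construction.
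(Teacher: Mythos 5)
Your construction of the spine and the case analysis showing that the $2$--strata on $S$ are discs is essentially the same as the paper's, and correct (in fact more detailed than the paper's terse claim that a non-disc stratum yields a sphere or an essential annulus). The gap is in the final paragraph, where you argue that an edge dual to a meridian disc $D_j^{(i)}$ is essential because the mod--$2$ intersection number with $D_j^{(i)}$ is a homotopy invariant of arcs rel endpoints on $T_i$. This is false: $D_j^{(i)}$ is not a closed surface, and $\partial D_j^{(i)}$ lies on $S$ in the interior of $M^c$. A homotopy $F\co [0,1]^2 \to M^c$ of such an arc, keeping endpoints on $T_i$, need not avoid $\partial D_j^{(i)}$, so $F^{-1}(D_j^{(i)})$ is a $1$--manifold whose boundary includes ``interior'' boundary points where $F$ crosses $\partial D_j^{(i)}$. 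Thus the count of endpoints on $\partial [0,1]^2$ is not forced to be even, and the mod--$2$ intersection number can change under the homotopy. Concretely, the arc dual to $D_j^{(i)}$ can be homotopic into $T_i$ if the corresponding $1$--handle of the compression body is homotopically redundant.

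The paper itself acknowledges exactly this: it states that an edge dual to one of the discs ``may not be essential'' and repairs the spine by handle slides. The hypothesis $M \not\cong T^2 \times [0,1]$ is used there in an essential way: if \emph{every} arc dual to a meridian disc of $Y$ were homotopic into the boundary torus $T$, then $\im(\pi_1 Y \to \pi_1 M) = \im(\pi_1 T \to \pi_1 M)$, so $T \hookrightarrow M$ would be a homotopy equivalence, forcing $M \cong T^2 \times [0,1]$ by Stallings. Hence at least one dual arc is essential, and sliding any ``bad'' handle over that one makes its dual arc essential too, and an induction finishes the argument. In your write-up the hypothesis $M \not\cong T^2 \times [0,1]$ is only used to conclude the Heegaard surface has genus $\ge 2$, which is not enough. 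You need to add the handle-slide step to close the gap.
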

 
 \begin{proof}
Choose an irreducible Heegaard decomposition of $M$ so that $M = Y \cup Y^\prime,$ where both $Y$ and $Y^\prime$ are compression bodies with a common boundary component $S$ and one other boundary component being an incompressible torus in $\partial M$.  Since the decomposition is irreducible, there cannot be a 2-sphere intersecting $S$ in a single essential curve. A spine $X$ for $M$ can be built by $X = S \cup D_1 \cup \dots D_k  \cup D^\prime_1 \cup \dots D^\prime_m$ where $D_1, \dots ,D_k$ is a collection of compressing discs for $Y$ and $D^\prime_1, \dots ,D^\prime_m$ is a similar collection for $Y^\prime$. Moreover, each disc collection is chosen such that it cuts its compression body up into a collar of the associated boundary torus, and, as in the previous proof, the boundaries of the discs can be assumed transverse on $S$ and having at least one intersection point. Hence $X$ is a simple spine of $M$ with at least one true vertex. 

Suppose there is a 2--stratum $F$ of $X,$ which is not a disc. Then $F \subset S$ and there must be either a 2--sphere meeting $S$ in a single essential curve or an essential annulus between the two boundary components of $M$ intersecting $S$ in a single loop. Both of these are ruled out since the splitting is irreducible and $M$ is anannular. So $X$ is a special spine of $M$. 

Let $N$ be the open manifold obtained from $M$ by adding a collar $\partial M \times [0,1)$ to its boundary. Then $N$ is homeomorphic to the interior of $M,$ $X$ is a special spine of $N$ with at least one true vertex, and we denote $\mathcal T$ the ideal triangulation of $N$ dual to $X.$ An ideal edge $e$ meets $M$ is a properly embedded arc $\lambda_e,$ and $e$ is essential if and only if $\lambda_e$ is not path homotopic into $\partial M.$

Now $e$ is either dual to a face of $S$ or to a disc in one of the families $\{D_1, \dots ,D_k\}$, $\{D^\prime_1, \dots ,D^\prime_m\}.$
If $e$ is dual to a 2--stratum contained on $S,$ then $\lambda_e$ has endpoints on different boundary tori and hence $e$ is essential. If $e$ is dual to one of the discs, then it may not be essential and we will show that (analogous to the proof of the previous theorem) one may perform a handle slide to arrive at an essential ideal triangulation. Taking advantage of the symmetry of the situation, it suffices to show this for the edges dual to $D_1, \dots ,D_k.$

We first show that there is at least one disc $D_i$ for $Y$ with the property that the arc $\lambda_i$ dual to $D_i$ is not path homotopic into a boundary torus. By way of contradition, suppose this is not the case. Then the image of $\pi_1(Y)$ in $\pi_1(M)$ would have to be the same as the image of $\pi_1(T)$ in $\pi_1(M).$ But then the inclusion $T \subset M$ is a homotopy equivalence and so $M=T \times [0,1]$, contrary to assumption (see \cite{stall}).  Hence $Y$ has at least one non-trivial dual arc to a disc.

Next suppose some disc  $D_j$ has a dual arc $\lambda_j$ path homotopic into the boundary torus. We can perform a handle slide of the handle dual to $D_j$ over the handle dual to $D_i$. This replaces $D_j , \lambda_j$ 
by a new disc/arc pair which is non-trivial, in the sense that the arc is not path homotopic into the boundary torus. 

So by induction on the number of pairs $D_j , \lambda_j$ for which $\lambda_j$ is path homotopic into the boundary torus, we conclude that  $\mathcal T$ is essential after doing handle slides. 
\end{proof}

\begin{figure}[t]
\centering
\includegraphics[width=0.6\textwidth]{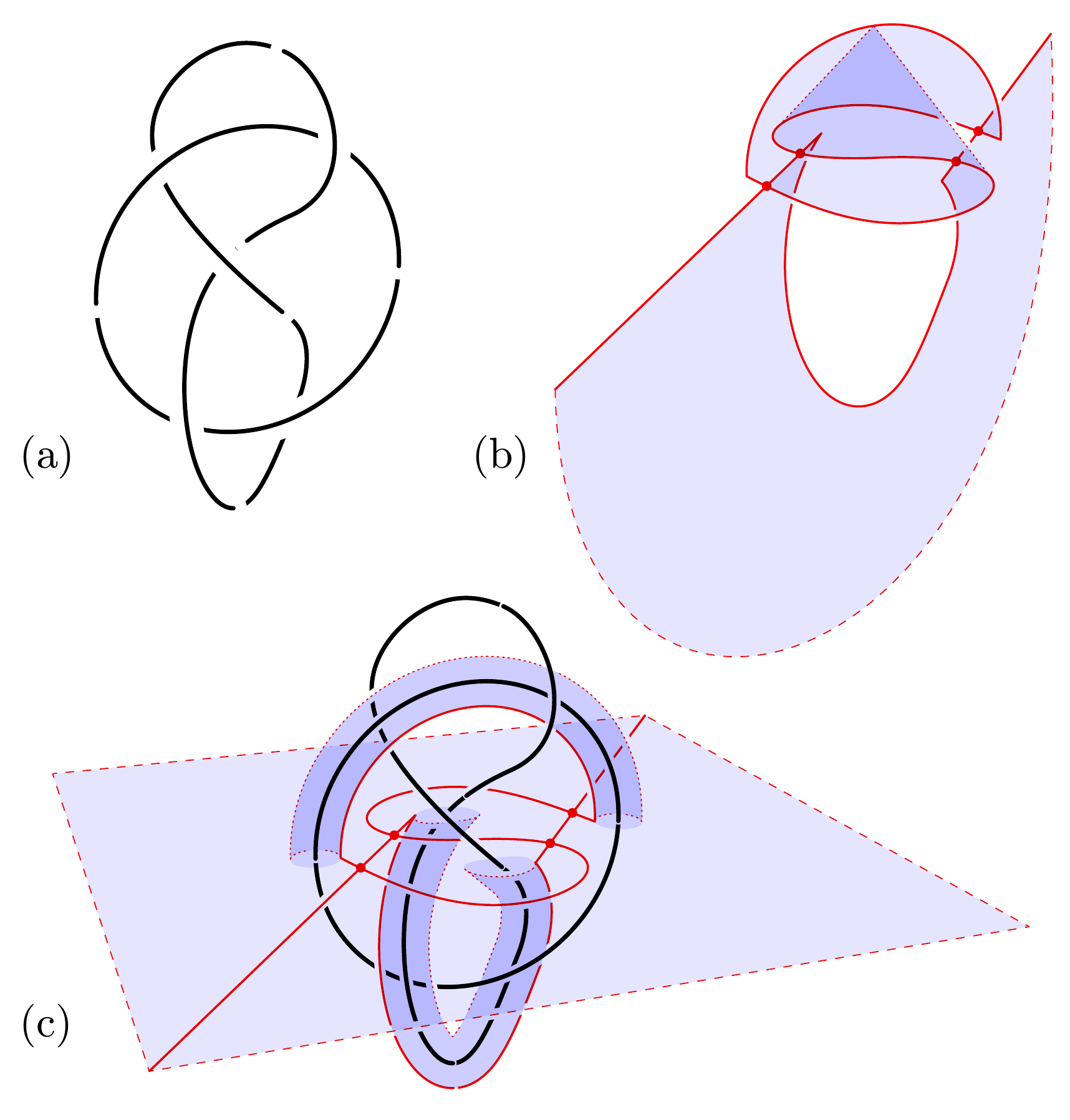}
\caption{}
\label{whitehead_link_example.pdf}
\end{figure}

\begin{Exa}[The Whitehead link]
Refer to Figure \ref{whitehead_link_example.pdf}. Diagram (a) shows the Whitehead link. Diagrams (b) and (c) show a special spine for the complement of the Whitehead link. In (c) we have a horizontal plane, with two tubes, one following one of the link components above the plane, and the other following the other link component below the plane. In (b) we have two discs, one above and one below the plane. Triple points for the spine are shown with a solid line, while the apparent contour of our view of the surface is shown dotted, and the cut out borders of the surface around infinity are shown dashed. The true vertices of the spine are shown with dots. Note that there are four true vertices, so the corresponding triangulation has four ideal tetrahedra. It can be checked that this is the same triangulation as listed in SnapPy~\cite{SnapPy} for the link \verb+S^2_1+.
\end{Exa}


 \section{Hierarchies}
 \setcounter{secnumdepth}{5} 


\begin{Thm}\label{closed haken}
Assume that $M$ is a closed Haken 3--manifold $M$. Then $M$ admits a strongly essential one-vertex triangulation.   
 
In particular, every closed $P^2$-irreducible non-orientable 3--manifold has a strongly essential triangulation. 
\end{Thm}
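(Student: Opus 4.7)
The plan is to generalise the Heegaard splitting argument of the previous section by replacing a single splitting surface with a full hierarchy. Since $M$ is closed Haken, it admits a hierarchy $M = M_0 \supset M_1 \supset \dots \supset M_n$, in which $M_i$ is obtained from $M_{i-1}$ by cutting along a properly embedded, two-sided, incompressible and (for $i \geq 1$) $\bd$-incompressible surface $S_i \subset M_{i-1}$, and $M_n$ is a disjoint union of 3--balls. Realising the $S_i$ simultaneously in $M$ in general position gives a 2--complex $X_0 = S_1 \cup \dots \cup S_n$ whose complement in $M$ is a disjoint union of open 3--balls. To turn $X_0$ into a spine of $M$ (a spine of $M$ with an open ball removed), I would adjoin a small family of disks glued along arcs in $X_0$ so as to amalgamate all but one of these complementary balls, and perform a lune move if necessary to produce at least one true vertex. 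Call the resulting 2--complex $X$.

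To show $X$ is a special spine, by Remark 1.1.11 of \cite{Mat} recalled in \S\ref{sec:spine} it suffices to check that each 2--stratum is an open disk. An essential simple closed curve in a hypothetical non-disk 2--stratum $F \subset S_i$ lies entirely on $S_i$ and is disjoint from every other $S_j$, so standard innermost-disk and outermost-arc arguments within the pieces of the hierarchy would produce either a compressing disk or a $\bd$-compressing disk for $S_i$ in $M_{i-1}$, contradicting the choice of $S_i$. The resulting one-vertex triangulation $\mathcal T$ dual to $X$ has each edge dual to a 2--stratum, which is either a subsurface of some $S_i$ or one of the amalgamating disks. Essentiality is then proved as in the previous section: if an edge loop $\gamma_e$ bounded a singular disk $H \colon D \to M$, then after general position and iterated surgery on $H^{-1}(\cup_j S_j)$ using incompressibility of each $S_j$ in turn (starting from $S_n$ and working outward), $H$ would reduce to a disk contained in a single 3--ball of $M_n$ whose boundary is the short dual arc through $F$, which is impossible.

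The hard part is the strongly essential condition. Suppose $\gamma_e, \gamma_f$ are two distinct edge loops related by a path homotopy $H \colon [0,1]^2 \to M$. The same inductive innermost-outermost surgery on $H^{-1}(\cup_j S_j)$, now exploiting both incompressibility and $\bd$-incompressibility of each $S_j$ in $M_{i-1}$, should eventually force the 2--strata dual to $\gamma_e$ and $\gamma_f$ to coincide. The delicate technical step is to choose the amalgamating disks of Step 1 so that they do not introduce any new parallelism: they should be chosen to span a tree in the dual graph whose vertices are the 3--balls of $M_n$ and whose edges are 2--strata of $\cup_j S_j$ separating adjacent 3--balls, with the new dual edges pairwise non-parallel and essential by virtue of $M$ being irreducible. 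Verifying that this inductive surgery actually reduces an arbitrary homotopy, without producing new complications from the amalgamating disks, is where most of the work will lie. The final sentence of the theorem then follows from the main statement, since any closed non-orientable $P^2$--irreducible 3--manifold has $H_1(M;\Z_2) \neq 0$ and therefore contains a two-sided incompressible surface, making it Haken.
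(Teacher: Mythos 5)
Your overall strategy---dualise a hierarchy and argue by pulling a hypothetical homotopy disc off the hierarchy surfaces one layer at a time---matches the paper's. But there are two genuine gaps.

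First, the ``amalgamating disks'' step is not a fix to the multi-ball problem, and it cannot be: adding $2$--cells to $X_0$ can only further subdivide the complementary regions, it never merges them. The actual obstruction is structural: to get a one-vertex dual triangulation the hierarchy must cut $M$ down to a \emph{single} ball, which forces the first surface $S_1$ to be non-separating and hence requires $H_1(M;\Q)\neq 0$. The paper therefore splits into two cases. When $H_1(M;\Q)=0$ the first surface is separating, the hierarchy produces two balls, the dual triangulation has two vertices, and one then \emph{crushes} an edge joining them (the Jaco--Rubinstein machinery applied to the normal sphere bounding a regular neighbourhood of that edge). Verifying that the crush is legal, and that essentiality and strong essentiality survive the crush, occupies roughly half the proof and involves several sub-cases (cone faces, tetrahedra meeting the edge in opposite edges, the ``T''-pattern intersection argument). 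None of this is visible in your sketch.

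Second, ``standard innermost-disk and outermost-arc arguments'' are not enough for strong essentiality: one needs to control where the $\partial$--compression pushes the boundary of $S_j$ to. The paper adds a specific \emph{lexicographical minimality} condition on $\partial S_i$ against the boundary pattern $\partial S_2,\dots,\partial S_{i-1}$ in the hierarchy construction, and this condition is invoked repeatedly (e.g.\ to show that the arc $\epsilon$ produced by a $\partial$--compression of $S_j$ lies entirely inside $S_i$, and to rule out stray arcs in the discs $D_3,D_4$). Without a complexity-minimising choice of hierarchy, the surgery process need not terminate and the inside/outside argument for the resulting sphere $D_1\cup D_2\cup D_3$ does not go through. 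Your proposal identifies the hard part but does not supply the extra hypothesis that makes it work. (Minor point: in the essentiality argument the pull-off runs from $S_1$ outward to $S_{i-1}$, not from $S_n$ inward---$\alpha$ is dual to $S_i$ and the contradiction comes from algebraic intersection number with $S_i$ inside $M_{i-1}$, not from landing in a ball of $M_n$.)
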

 
\begin{proof}
We build a hierarchy for $M$ whose complement consists of either one or two open 3--cells. 
This  is closely related to the construction in \cite{raman} of $0$--efficient triangulations of Haken 3--manifolds. There are two cases, depending on whether $H_1(M, \Q) \ne 0$ or $H_1(M, \Q)=0$. The dual of the hierarchy will be a one-vertex or two-vertex triangulation respectively. In the first case, we show that the triangulation is strongly essential and in the second case, we crush an edge which joins the two vertices to a point and then establish that the resulting triangulation is strongly essential. 

\subsection{Case 1: $H_1(M, \Q) \ne 0$}\label{closed, no crushing}

Begin with a closed incompressible non-separating 2-sided surface $S_1$, which exists since $H_1(M, \Q) \ne 0$, using the well-known method of \cite{stall}. Cut open along $S_1$ to give a compact connected manifold $M_1$ with incompressible boundary consisting of two copies of $S_1$. Construct a non-separating 2-sided incompressible and $\partial$-incompressible proper surface $S_2$ in $M_1$ (again using \cite{stall}) and cut open along $S_2$. Note that $M_2$ may not have incompressible boundary.  Iterating this construction, we can find a hierarchy ${\mathcal H} = \{S_1, \dots ,S_k\}$ with the property that $M$ cut open along $\mathcal H$ is a single ball. Note that surfaces $S_i$ in the hierarchy may be compressing disks.

We require that:
\begin{enumerate}
\item \label{item1} $S_i$ is a non-separating 2-sided incompressible and $\partial$-incompressible proper surface (including the possibility of compressing disks) in $M_{i-1}.$
\item \label{item2} $S_i$ is in general position with respect to $\{S_1, \dots ,S_{i-1}\}$. 
\item  \label{item3} For each $i=2,\ldots,k$, $S_i$ is chosen amongst all surfaces in $M_{i-1}$ that satisfy (\ref{item1}) and (\ref{item2}) so that $\partial S_i$ meets the 1-skeleton of the previous surfaces and itself minimally relative to a lexicographical complexity function. That is, we look at the vector 
$$\left(\partial S_i \star \partial S_2, \partial S_i \star \partial S_3, \ldots, \partial S_i \star \partial S_i\right)$$ 
where $\partial S_i \star \partial S_j$ counts the number of transverse intersection points between $\partial S_i$ and $\partial S_j$.
We choose $S_i$ so that this is smallest in the lexicographical ordering.
\end{enumerate}

Note that Waldhausen \cite{wald1} uses a different but related criterion, choosing a surface $S_i$ of maximal Euler characteristic. Conditions (\ref{item2}) and (\ref{item3}) imply that the hierarchy is a special spine and so the dual cellulation to the hierarchy is a triangulation.
The dual triangulation $\mathcal T$  to $\mathcal H$  is then a one-vertex triangulation.

\subsubsection{} We claim that a triangulation dual to a hierarchy satisfying the above three conditions has the property that every edge loop is essential and no two loops are homotopic, keeping the vertex base point fixed.  
 
\subsubsection{} \label{homotope using incomp} Suppose that $\alpha$ is an edge loop. Then by definition of the dual triangulation, $\alpha$ is dual to a surface $S_i$ of the hierarchy, i.e meets this surface in a single point and is disjoint from the other surfaces of the hierarchy. If we cut $M$ open along surfaces $S_1, \dots ,S_{i-1}$, the result is a compact 3--manifold $M_{i-1}$ where $S_i$ is a non-separating 2-sided incompressible and $\partial$-incompressible proper surface. If $\alpha$ is contractible, then there is a map $f:D \to M$, where $D$ is a disc, satisfying $f(\partial D)=\alpha$. Since all the surfaces $S_1, \dots ,S_{i-1}$ are incompressible, we can homotope $f$ so that $f(D)$ misses all these surfaces, starting inductively with a surface of smallest index. But then $f:D \to M_{i-1}$ implies that $\alpha$ is contractible in $M_{i-1}$, which contradicts invariance of algebraic intersection number between $S_i$ and $\alpha$. Hence we have established that the triangulation $\mathcal T$ is essential. 
 
\subsubsection{} \label{pull off homotopy two surfaces} We will now establish the second property, i.e that $\mathcal T$ is strongly essential. Suppose that $\alpha_1, \alpha_2$ are different edge loops. These are dual to surfaces $S_i,S_h$ in the hierarchy. There are two cases, $i \ne h$ and $i=h$. The first case is relatively easy. We can assume without loss of generality that $i<h$. If we cut $M$ open along surfaces $S_1, \dots ,S_{i-1}$, the result is a compact 3--manifold $M_{i-1}$ where $S_i$ is a non-separating 2-sided incompressible and $\partial$-incompressible proper surface. By assumption,  $\alpha_1$ is a dual loop to $S_i$, whereas $\alpha_2$ is disjoint from $S_i$. Hence the loops are not homotopic in $M_{i-1}$. Suppose there was a homotopy $H:D \to M$ between these two edges loops, where $D$ is a disc. Hence $H(\partial D) = \alpha_1 \cup \alpha_2$. As in \ref{homotope using incomp}, we can homotope $H$ off the surfaces  $S_1, \dots ,S_{i-1}$ since these surfaces are incompressible and $\partial$-incompressible. But this would then give a homotopy between $\alpha_1,\alpha_2$ in $M_{i-1}$, which as we noted is impossible. 
This completes the proof in the first case. 
 
\subsubsection{}For the second case $i=h$ and we again cut open along $S_1, \dots ,S_{i-1}$ and examine a homotopy $H:D \to M$ between $\alpha_1, \alpha_2$.  As in the previous case, such a homotopy can be pulled off $S_1, \dots ,S_{i-1}$ so is contained in $M_{i-1}$. By definition, the edge loops $\alpha_1$ and $\alpha_2$ intersect $S_i$ exactly once and are disjoint from $S_1, \dots ,S_{i-1}$, so the pullback of $S_i$ under the homotopy is an arc with two endpoints, one on each of the boundary arcs, which are pre-images of $\alpha_1, \alpha_2$, together with some simple closed curves. By altering the homotopy, we can arrange that the pullback of $S_i$ under the homotopy is a single arc, since any loops in the pullback of $S_i$ can be eliminated using the incompressibility of $S_i$. 

\subsubsection{} Consider now the pullback of the remainder of the hierarchy, i.e the surfaces $S_{i+1}, \dots ,S_k$. Our plan is to change the homotopy so that it misses all of these surfaces, and is still disjoint from $S_1, \dots ,S_{i-1}$. Let $j \in [i+1,k]$ be the least integer such that   $S_j$
has non-empty intersection with the image of the homotopy $H$. The pullback of $S_j$ under $H$ will have arcs with ends on the pullback of $S_i$ and loops in the interior of $D$. No arcs of the pullback of $S_j$ can have ends on $\partial D$, since the image of $\partial D$ is the two edges $\alpha_1$ and $\alpha_2$, whose only intersection with the hierarchy is with $S_i$. We are not yet concerned with the intersections of $D$ with surfaces deeper in the hierarchy.

\begin{figure}[htb]
\centering
\includegraphics[width=0.8\textwidth]{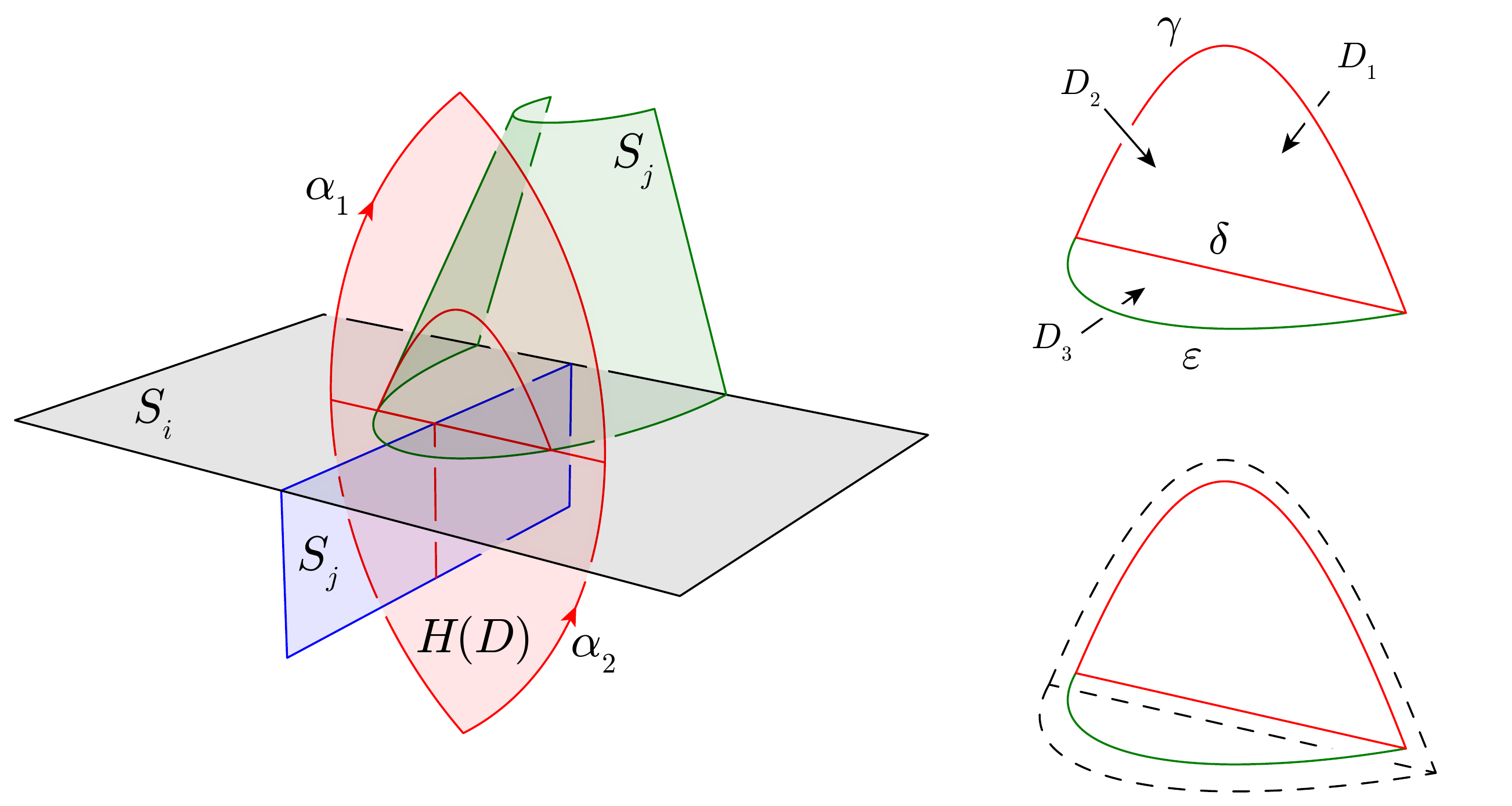}
\caption{An example showing how the image of the homotopy disc can intersect the hierarchy. The diagrams to the right are a close-up view of the intersection arcs on the left, and the way to alter the homotopy to avoid the arc $\gamma$.}
\label{remove_arc.pdf}
\end{figure}

\subsubsection{} \label{remove arcs bigons} Loops can be eliminated in the usual way, using the incompressibility of $S_j$ in $M_{j-1}$. Removing the arcs is more complicated. See Figure \ref{remove_arc.pdf}. Note that the figure shows $H(D)$ embedded in $M$, although in general it will be immersed and the endpoints of $\alpha_1$ and $\alpha_2$ are identified. Also in the argument following, we refer to discs, which in general are images of discs, rather than embeddings. To simplify matters, we will first deal with the special case of $H(D)$ embedded and then indicate how to proceed in the general case. We choose an innermost arc of $H(D) \cap S_j$ on one of the two sides of $S_i$; in the figure this arc is $\gamma$. This bounds a disc $D_1$ on $D$, with $\partial D_1 = \gamma \cup \delta$, and $\delta \subset S_i$. 
Since $S_j$ is $\partial$-incompressible in $M_{j-1}$, the disc $D_1$ in $M_{j-1}$ with boundary the union of $\gamma \subset \partial S_j$ and $\delta \subset \partial M_{j-1}$ (where $\gamma \cap \delta = \partial\gamma = \partial\delta$) implies the existence of a disc $D_2$ in $S_j$ bounded by $\gamma$ and an arc $\epsilon$ in $\partial M_{j-1}$. 

\subsubsection{} \label{epsilon} In fact, the arc $\epsilon$ is contained only in $S_i$. To see this, first note that the only intersections we see with $\delta$ are with arcs of intersection with $S_j$ (i.e. there are no arcs of intersection with $S_{j'}$ for $i<j'<j$). This is because we have already chosen $H(D)$ to miss all of these surfaces. If $\epsilon$ were not contained entirely in $S_i$, then it would cross either the boundary of $S_i$, or the boundary of some other surface $S_{j'}$ with $j'<j$. In either case, we could modify $S_j$ by replacing $D_2$ with $D_1$. The resulting surface would still be incompressible and $\partial$-incompressible, and would have smaller lexicographical complexity since replacing $\epsilon$ with $\delta$ would remove intersections of $\partial S_j$ with the boundary of some surface of the hierarchy with smaller index than $j$.

\subsubsection{} \label{inside outside} Thus $\delta \cup \epsilon$ is a loop in $S_i$. Since $S_i$ is incompressible, $\delta \cup \epsilon$ bounds a disc in $S_i$. We rule out the possibility that $D_3$ is on the ``outside''; that is that $\alpha_1$ and $\alpha_2$ intersect $D_3$. The disc $D_3$ together with $D_1$ and $D_2$ form a sphere, which bounds a ball $B$ in $M$ since $M$ is irreducible. Either this ball is above or below the sphere as seen in Figure \ref{remove_arc.pdf}. If it is above then $S_j$ is contained in $B$, if below then $S_i$ is contained in $B$. In either case, a surface contained in a ball is not incompressible.
Therefore the disc $D_3$ is on the ``inside'', as labelled in Figure \ref{remove_arc.pdf}.

\subsubsection{} \label{modify} Now, we can modify the homotopy, eliminating the arc $\gamma$, by replacing $H$ on a small neighbourhood of the pullback of $D_1$ with a map to the union of $D_2$ and $D_3$, pushed slightly off as in the lower right diagram of Figure \ref{remove_arc.pdf}.
It is possible that parts of $S_j$ connect onto the other side of $S_i$ from the arc $\gamma$ (such a part of $S_j$ is shown in the figure), 
and we should be concerned that in removing the (pullback of the) arc $\gamma$, we do not introduce more arcs. There are no arcs on the part of the disc that is pushed off of $D_2$, since such an arc would correspond to a piece of surface with boundary on $S_j$. An arc on the part of the disc that is pushed off of $D_3$ has a corresponding arc of the boundary of $S_j$ in $D_3$ itself. 

\subsubsection{} We claim that the only arcs that are possible in $D_3$ are arcs of intersection with $S_j$ that intersect each of $\delta$ and $\epsilon$ once, or $\delta$ twice. (In Figure \ref{remove_arc.pdf} only one arc that hits both $\delta$ and $\epsilon$ is shown.)  First, there are no arcs of intersection of $D_3$ with surfaces $S_{j'}$ for $i<j'<j$. To see this, note that there are no such intersections with $\delta$ by our choice of $H(D)$, and any such intersections with $\epsilon$ would contradict the minimality of the lexicographical complexity function for $S_j$. If there were any such arcs of intersection entirely in the interior of $D_3$, then the first such surface in the hierarchy, $S_{j'}$, would intersect $S_i$ in a loop bounding a disc, which would contradict incompressibility of $S_{j'}$ (since $S_{j'}$ is not a disc).
Second, any arcs of intersection with $S_j$ that intersect $\epsilon$ twice could be isotoped over to reduce the lexicographical complexity function for $S_j$.
Therefore each arc has at least one of its endpoints on $\delta$, which means that it connects to a pre-existing arc on $H(D)$. So this modification of the homotopy removes the pullback of $\gamma$ from $D$, and does not introduce any new arcs. 
Continuing in this way, we can remove all arcs of intersection between $S_j$ and $H(D)$. 

\subsubsection{}
We can then repeat the process for $S_{j''}$ where $j<j''\leq k$. 
Eventually, we end up with the graph of intersection between $H(D)$ and the hierarchy consisting of just a single arc, which means that the edge loops $\alpha_1, \alpha_2$ are identical in the triangulation. The proof of the first case is now complete assuming that $H(D)$ is embedded.

\subsubsection{} We now indicate the modifications required to deal with the general case when $H(D)$ is singular. Firstly in 4.1.7 we need to explain why the arc $\epsilon$ must be contained in $S_i$. Note that the subdisk $D_1$ of $D$, mapped by $H$, must be homotopic into $S_j$, since $S_j$ is $\partial$-incompressible in $M_{j-1}$. Hence $\delta$ is homotopic to $\epsilon$ in $\partial M_{j-1}$. We claim that this contradicts our choice of $S_j$ satisfying property (3) that $\partial S_j$ has a minimal number of transverse intersection points with boundary curves $\partial S_{j'}$ for $j'<j$ with these numbers lexicographically ordered. 

To prove the claim, notice that the curves $\partial S_{j'}$ for $j'<j$ form a trivalent graph $\Gamma$ (often called a boundary pattern) in $\partial M_{j-1}$. The homotopy between $\delta$ and $\epsilon$ can be easily shown to imply there is an innermost bigon between a subarc of $\epsilon$ and a subarc of $\Gamma$ in case $\epsilon$ is not contained in $S_i$. But then $\epsilon$ can be isotoped across this bigon producing a proper isotopy of $S_j$ in $ M_{j-1}$ reducing the number of intersection points of $\partial S_j$ with the boundary pattern, with the lexicographic ordering of complexity. So this establishes the claim and establishes the step in 4.1.7.

The remainder of the argument in 4.1.8 to 4.1.11 follows as in the embedded case, requiring only the observation that the ball $B$ in 4.1.8 is the image of a ball, rather than an embedded ball. But this does not affect the arguments and so the first case is established. 

\subsection{Case 2: $H_1(M, \Q) = 0$}

In the second case, the first surface $S_1$ in the hierarchy will be a closed separating incompressible 2-sided surface. The remaining surfaces for the two components of $M$ split open along $S_1$ will be properly embedded non-separating incompressible and $\partial$-incompressible 2-sided surfaces. After cutting open along these we get two 3-cells, so the dual triangulation $\mathcal T$ to this hierarchy $\mathcal H$ has two vertices. We claim that the result of crushing any edge $E$ joining the two vertices is a well-defined one-vertex triangulation $\mathcal T^*$ of $M.$ Here the method of \cite{0-efficient} for crushing suitable 3-cells with normal 2-sphere boundary will be followed. To prove this claim, we first establish that the boundary $B$ of a regular neighbourhood of $E$ is a normal 2-sphere. 

\subsubsection{} Suppose $F$ is a face of the triangulation $\mathcal T$ with at least two edges in $\partial F$ identified with $E.$ Then $F$ is either a cone, a M\"obius band, a 3-fold or a dunce hat (see \cite{0-efficient} for the terminology). The last three cases cannot occur since then the ends of $E$ would have to be the same vertex. Hence assume $F$ is a cone. Then the third edge $E^\prime$ of $\partial F$ is a loop which is contractible via $F.$ We can now use the same method as in Case 1 to produce a contradiction: $E^\prime$ is dual to some surface $S_i$ of the hierarchy. The contracting disc $F$ can be pulled off of all the previous surfaces $S_1, \dots S_{i-1}$ of the hierarchy. We then see that  $E^\prime$ is contractible in $M_{i-1}$, and this gives a contradiction to the fact that $E^\prime$ is dual to $S_i$. So this shows that $F$ cannot be a cone either. It follows that each face of the triangulation contains $E$ at most once, and hence the surface $B$ is normal.

\begin{figure}[htb]
\centering
\includegraphics[width=0.7\textwidth]{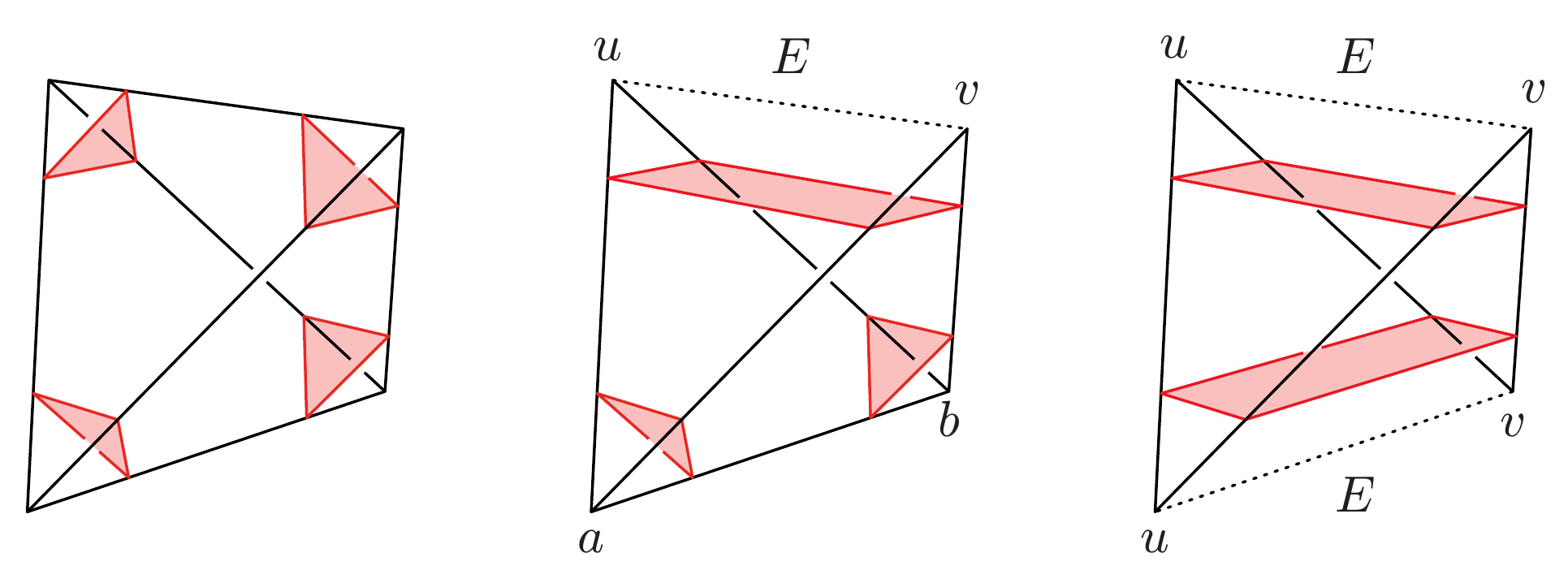}
\caption{The possible ways in which the edge $E$ can be incident to a tetrahedron.}
\label{possible_positions_of_E.pdf}
\end{figure}

\begin{figure}[htb]
\centering
\includegraphics[width=0.8\textwidth]{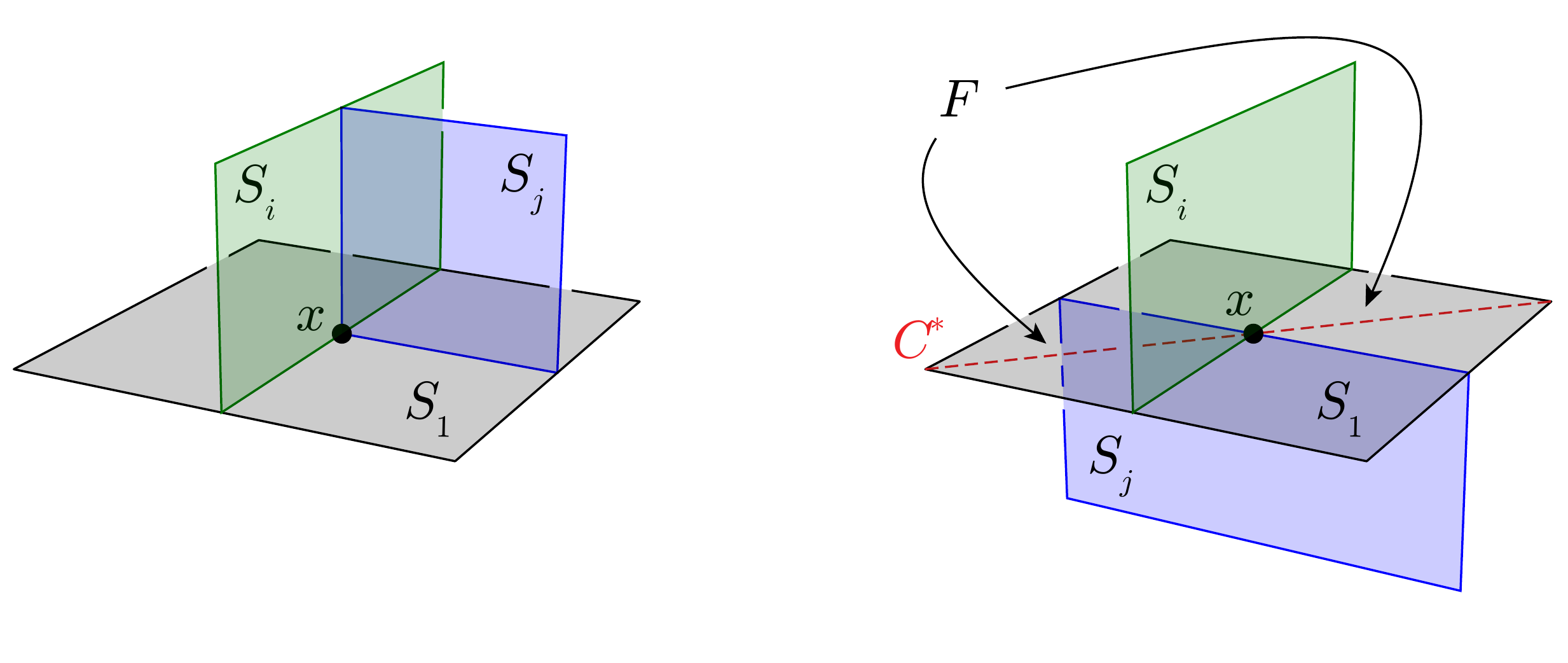}
\caption{The two possible arrangements of surfaces giving a tetrahedron incident with the two distinct vertices in $\mathcal{T}$.}
\label{hierarchy_arrangements.pdf}
\end{figure}

\subsubsection{} Figure~\ref{possible_positions_of_E.pdf} shows the possible ways in which the edge $E$ can be incident to a tetrahedron and the resulting normal discs in $S$: the edge $E$ either meets a tetrahedron only in the vertices, or in one edge, or in a pair of opposite edges. We will now show that the last case does not happen. Suppose there is a tetrahedron of $\mathcal T$ with two opposite edges identified to $E$. This tetrahedron is dual to a vertex $x$ of the hierarchy which is at the intersection of $S_1,S_i,S_j$, three surfaces of the hierarchy, with the first being the closed separating 2-sided surface. 
 If $S_i,S_j$ were on the same side of $S_1$, we would have $E$ dual to faces on different surfaces of the hierarchy, which cannot happen. Since $S_i,S_j$ must be on opposite sides of $S_1$, $x$ is a crossing point of curves $C_i \subset \partial S_i$ and $C_j \subset \partial S_j$ in $S_1$. The edge $E$ will then be dual to faces of the hierarchy on $S_1$ which have $x$ as a vertex and are diagonally opposite at $x$ (since the edges  of the tetrahedron corresponding to $E$ are opposite each other). Since each edge is dual to one face, we conclude that there is a face $F$ of the hierarchy with two vertices at $x$. Then there is a loop $C^*$ which is contained in $F$ and meets both $S_i$ and $S_j$ transversely only at $x$. We claim that $C^*$ is an element of infinite order in $H_1(M)$, contrary to assumption. To see this, first cut $M$ open along $S_1$, the surface containing the loop. We now use the Mayer-Vietoris sequence to compute $H_1(M)$ as the result of gluing back along $S_1$. We can push $C^*$ into both sides of the surface. By duality with $S_i$ on one side and $S_j$ on the other, we see that both copies of the loop are elements of infinite order in the homology of the cut open manifold $M_1$. Hence the Mayer-Vietoris sequence shows that gluing produces an element of infinite order coming from gluing the two copies of $C^*$ together. 
 So we conclude this case does not occur.
 
\subsubsection{} It follows that each tetrahedron contains $E$ at most once, and since the endpoints of $E$ are distinct vertices in the triangulation, we can crush $E$ and the surrounding tetrahedra to obtain a triangulation $\mathcal T^*$ as claimed.

\subsubsection{} To complete the proof of the theorem (showing that $\mathcal T^*$ is essential and strongly essential), we follow a similar argument to the case of infinite homology. First suppose that $\alpha$ is an edge loop in $\mathcal T^*$. Then there is an edge $\beta$ in $\mathcal T$ which becomes $\alpha$ after crushing. If $\beta$ is also an edge loop, then we can follow exactly the same argument as in the first case to get a contradiction. So assume that $\beta$ joins the two vertices and so $\beta \cup E$ is a loop. Clearly $\alpha$ being contractible is equivalent to $\beta \cup E$ being contractible. Since $\beta$ joins the two vertices, it must be dual to $S_1$, and so $\beta \cup E$ is contractible if and only if there is a homotopy between $\beta$ and $E$.
But now we can use the same argument as in case 1 to show that $E$ and $\beta$ are dual to the same face of $S_1$ and so are in fact the same edge. But then $\alpha$ cannot have come from $\beta = E$, and we have our contradiction. This finishes the first part showing that $\mathcal T^*$ is essential. Finally suppose that $\alpha_1,\alpha_2$ are homotopic edge loops in $\mathcal T^*$. Again choose edges $\beta_1,\beta_2$ which map to $\alpha_1,\alpha_2$ under the crushing map from $\mathcal T$ to $\mathcal T^*$. There are three cases: 
 
\subsubsection{} In the first case, $\beta_1, \beta_2$ have the same endpoints and are homotopic keeping their endpoints fixed.  As before, the same argument as in the case of infinite homology starting at \ref{pull off homotopy two surfaces} shows this cannot happen. (Note that this case covers both the endpoints of each edge being the same or different, and the argument is the same in each case.) 
 
\subsubsection{} In the second case $\beta_1$ has both endpoints on one of the two vertices, and $\beta_2$ has both endpoints on the other vertex, and
 the loop 
formed by following $\beta_1$ then $E$, then $\beta_2$ in reverse then $E$ in reverse,
 is contractible. In this case, both $\beta_1, \beta_2$ are loops. We can now follow the previous argument involving Mayer Vietoris to show that the homology class of the homotopic loops $\beta_1, \beta_2$ is of infinite order, since these loops are dual to spanning surfaces on either side of $S_1$. So this contradicts our assumption that the first homology is finite. 
  
\subsubsection{} In the third case, without loss of generality $\beta_2$ has both endpoints at the same vertex, while $\beta_1$ has them at different vertices, and the loop $\beta_1 \cup \beta_2 \cup E$ is contractible, giving us a homotopy $H\co D\rightarrow M$. Since it has different vertices at its endpoints, $\beta_1$ must be dual to $S_1$, and since it has the same vertex at its ends, $\beta_2$ is dual to some $S_i, i>1$. As before, our plan is to pull the homotopy off most of the hierarchy, while fixing the boundary. This time, our goal is that the intersection of $H(D)$ with the hierarchy consists of a `T' pattern: an arc of intersection with $S_1$ joining a point of $E$ with a point of $\beta_1$, together with an arc of intersection with $S_i$ joining a point of $\beta_2$ with a point of the first arc. Then, collapsing $E$ corresponds to deleting the subarc of the $S_1$ arc joining the triple point to the $E$ section of the boundary, then collapsing the $E$ part of the boundary. The result is a homotopy showing that $\alpha_1$ and $\alpha_2$ are dual to the same face of the hierarchy and so are in fact the same edge.

\subsubsection{} To achieve our goal `T' intersection pattern, first we pull $H(D)$ off all surfaces of the hierarchy from $S_1$ down to $S_{i-1}$ other than the single arc of intersection with $S_1$. Loops are removed using incompressibility as usual, and arcs of intersection that meet the intersection arc with $S_1$ are removed as in \ref{remove arcs bigons}. When we come to $S_i$, we find that one of our arcs of intersection hits $\beta_2$ at one end, and the $S_1$ arc at the other, since there is nowhere else that the arc can go to. We remove any other intersections of $S_i$ with $H(D)$ as usual. At this stage then we have our `T' intersection pattern, together with some intersections with surfaces from $S_{i+1}$ down to $S_k$.  Let $j \in [i+1,k]$ be the least integer such that   $S_j$
has non-empty intersection with the image of the homotopy $H$.
There are two cases, depending on whether an arc of $S_j$ has both endpoints on the same one of the three ``legs'' of the `T', or on different legs. The former case is dealt with innermost first, again as in \ref{remove arcs bigons}, and we perform all of these homotopies first. Now for the second case, there are two subcases depending on which one of the two components formed by the initial separating surface $S_1$ contains $S_j$. If $S_j$ is in the component not containing $S_i$ then the $S_j$ arc runs over the ``top'' of the `T', and no parts of $S_j$ can appear on the other side of $S_1$. This case is again dealt with as in \ref{remove arcs bigons}. 

\begin{figure}[htb]
\centering
\includegraphics[width=0.7\textwidth]{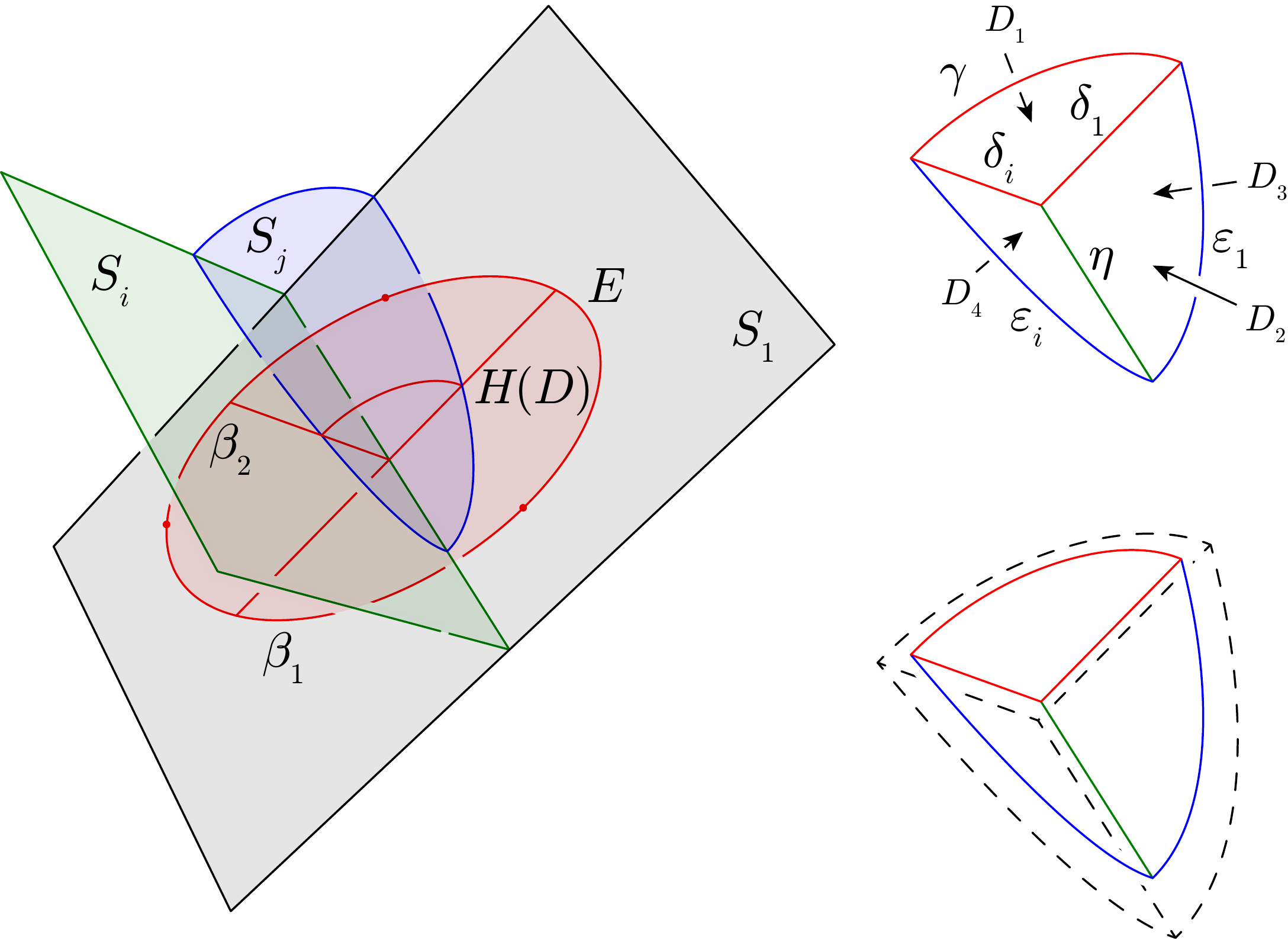}
\caption{The image of the homotopy disc intersecting the hierarchy. The diagrams to the right are a close-up view of the intersection arcs on the left, and the way to alter the homotopy to avoid the arc of intersection with $S_j$.}
\label{remove_arc_triangle_version.pdf}
\end{figure}

\subsubsection{}\label{T case bdry incomp} Finally, we deal with $S_j$ arcs that connect between an $S_1$ leg and the $S_i$ leg. We order these by the distance of the $S_i$ end of the arc from the triple point of the `T', and deal with the $S_j$ arc closest to the triple point first. See Figure \ref{remove_arc_triangle_version.pdf}. Here the arc of intersection with $S_j$ is labelled $\gamma$. Since $\gamma$ is the innermost arc, there are no other arcs of intersection inside the disc $D_1$. As in the situation in \ref{remove arcs bigons}, boundary incompressibility of $S_j$ in $M_{j-1}$ together with minimal lexicographical complexity means that there is a disk $D_2$ in $S_j$ bounded by $\gamma$ and another arc, here the union of $\epsilon_1 \subset S_1$ and $\epsilon_i \subset S_i$. This arc must be contained entirely in $S_1$ and $S_i$ (with one connected component in each) by minimal lexicographical complexity for $S_j$. The arcs $\epsilon_1$ and $\epsilon_i$ have no intersections with $S_{j'}$ for $i<j'<j$, for the same reasons.

\subsubsection{}\label{eta} Now consider $\eta$, the part of the boundary curve of $S_i$ between $\delta_i \cap \delta_1$ and $\epsilon_i \cap \epsilon_1$. This must be a single arc on $S_i$ which does not intersect any other surface boundary curve, by minimal complexity for $S_i$, comparing it with the curve $\delta_1\cup\epsilon_1$. The curves $\eta, \delta_i$ and $\epsilon_i$ together form a loop in $S_i$, and bound a disc $D_4$ in $S_i$ by the incompressibility of $S_i$. Here there is no question of which side of the loop contains the disc since $\eta$ is on the boundary of $S_i$. Also, the curves $\eta, \delta_1$ and $\epsilon_1$ together form a loop in $S_1$, and bound a disc $D_3$ in $S_1$ by the incompressibility of $S_1$. If the disc were ``outside'' the loop, so that $E$ and $\beta_1$ intersect it, then similarly to as in \ref{inside outside}, the union of $D_3$ with the discs $D_1, D_2$ and $D_4$ gives a sphere, so by irreducibility of $M$, one or the other side is a ball. But then either $S_i$ or $S_1$ would be contained in the ball (depending on which side of the sphere it is), which is impossible because of the incompressibility of the surfaces. Therefore $D_3$ is on the ``inside'' of the loop $\eta \cup \delta_1 \cup \epsilon_1$ as shown in Figure \ref{remove_arc_triangle_version.pdf}.

\subsubsection{} We claim that there are no curves of intersection between the interiors of any of $D_2, D_3, D_4$, and any of the boundary arcs of the hierarchy. First, $D_2 \subset S_j$, so none of the surfaces with index less than or equal to $j$ can have boundary on it. Second, $D_3$ has no loops of intersection in its interior since it is a disc. There are no intersections with arcs of $S_j$, since $S_j$ is contained in the other component of $M\setminus S_1$. For intersections with $S_{j'}$ for $j'<j$, $D_3$ has boundary $\delta_1, \eta$ and $\epsilon_1$. The arc $\delta_1$ has no intersections by our choice of $H(D)$. The arcs $\epsilon_1$ and $\eta$ have no intersections with $S_{j'}$ for $j'<j$ by our previous arguments in \ref{T case bdry incomp} and \ref{eta} respectively. So there are no intersections with $D_3$. Lastly, $D_4$ again has no loops of intersection in its interior since it is a disc. For intersections with $S_{j'}$ for $i<j'<j$, there are no intersections with $\delta_i$ by our choice of $H(D)$, and no intersections with $\epsilon_i$ or $\eta$ again by our arguments in \ref{T case bdry incomp} and \ref{eta} respectively. 
 For intersections of $S_j$ with $D_4$, the above arguments also rule out any arcs that do not intersect $\delta_i$. Any arc that does intersect $\delta_i$ implies that there is an arc of intersection in $H(D) \setminus D_1$ starting at $\delta_i$. However, we have already removed all such arcs that have both endpoints on the $S_i$ arc, and the only other option would be an arc connecting an $S_1$ arc and the $S_i$ arc that is closer to the triple point of the `T' than the supposedly innermost such arc we are considering. So there are no arcs of intersection with any of $D_2, D_3$ or $D_4$. Therefore we can modify the homotopy, eliminating the arc $\gamma$, by replacing $H$ on a small neighbourhood of the pullback of $D_1$ with a map to the union of $D_2, D_3$ and $D_4$, pushed slightly off as in the lower right diagram of Figure \ref{remove_arc_triangle_version.pdf}. Continuing in this way, we can remove all arcs of intersection between $S_j$ and $H(D)$. 

\subsubsection{} Again, we repeat the process for $S_{j''}$ where $j<j''\leq k$. 
Eventually, we end up with the graph of intersection between $H(D)$ and the hierarchy consisting of the desired `T' graph, and we are done with this case, and the proof of the main statement is complete.
 The last sentence of the theorem follows since a closed $P^2$-irreducible non-orientable 3--manifold is Haken. 
 \end{proof}

\begin{Rem}
It is also possible to extend this technique to cusped hyperbolic 3--manifolds as illustrated by the following examples. We will not pursue this but use a different technique to produce essential and strongly essential triangulations for cusped and closed hyperbolic $n$-manifolds in 
Section \ref{constant curvature}. In the cusped case, the technique is to subdivide the canonical decomposition into ideal hyperbolic polyhedra, using \cite{EP}, into ideal simplices. 
In the closed case, we follow a similar approach starting with a Delaunay cell decomposition.
\end{Rem}

\begin{figure}[htb]
\centering
\includegraphics[width=0.6\textwidth]{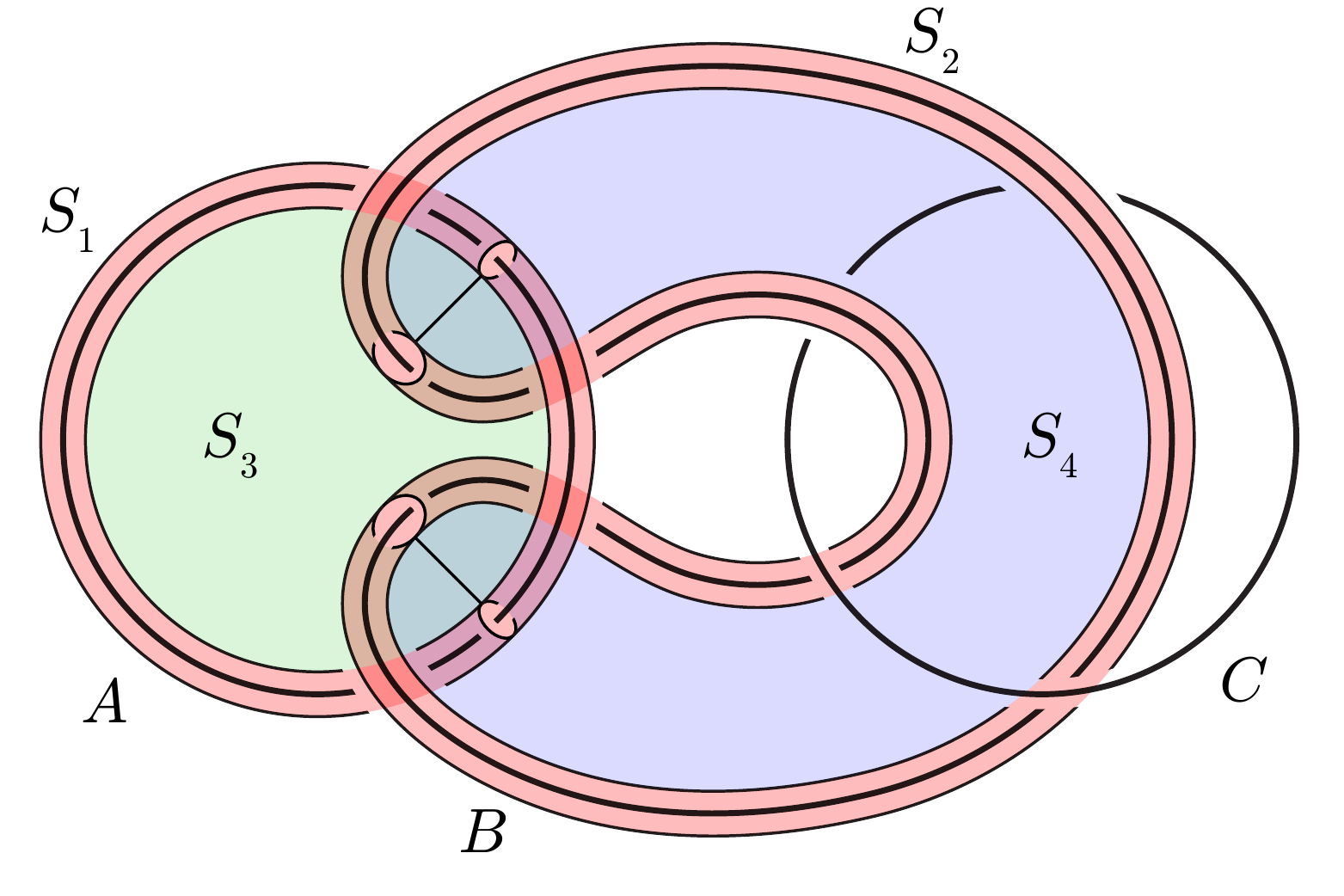}
\caption{A hierarchy for the Borromean rings.}
\label{borromean_hierarchy.pdf}
\end{figure}

\begin{Exa}[The Borromean rings]
See Figure \ref{borromean_hierarchy.pdf}. The three components of the Borromean rings are labelled $A, B$ and $C$. The first two surfaces, $S_1$ and $S_2$ 
\ifcolour
(shaded red)
\fi 
in the hierarchy are tori, parallel to $A$ and $B$. The third, $S_3$ 
\ifcolour
(shaded green)
\fi  
is a sphere with three holes, one boundary component is a longitude of $S_1$, and the other two are meridians of $S_2$. The fourth, $S_4$ 
\ifcolour
(shaded blue)
\fi 
is a disc, which can be described by starting with a sphere with three holes, this time with one boundary component a longitude of $S_2$ and the other two meridians of $S_1$, and then cutting this surface along the intersection with $S_3$. The result of this as shown in Figure \ref{borromean_hierarchy.pdf} is not a hierarchy, as the singular edges at the intersection between $S_3$ and $S_4$ are degree 4. We can fix this by altering $S_4$ slightly, pushing the components of $\partial S_4 \cap S_3$ off itself by moving it along $S_3$. There are four different ways to do this, as we can push each of the two pairs of components in two different directions. The cellulation dual to the complex we have before pushing off has square faces -- pushing off breaks each square into two triangles. The triangulations obtained by pushing off in one direction versus another are therefore related by flipping the diagonal of that square. This is sometimes referred to as a 4-4 move on the triangulation. 
\end{Exa}


\begin{figure}[htbp]
\centering
\includegraphics[width=0.6\textwidth]{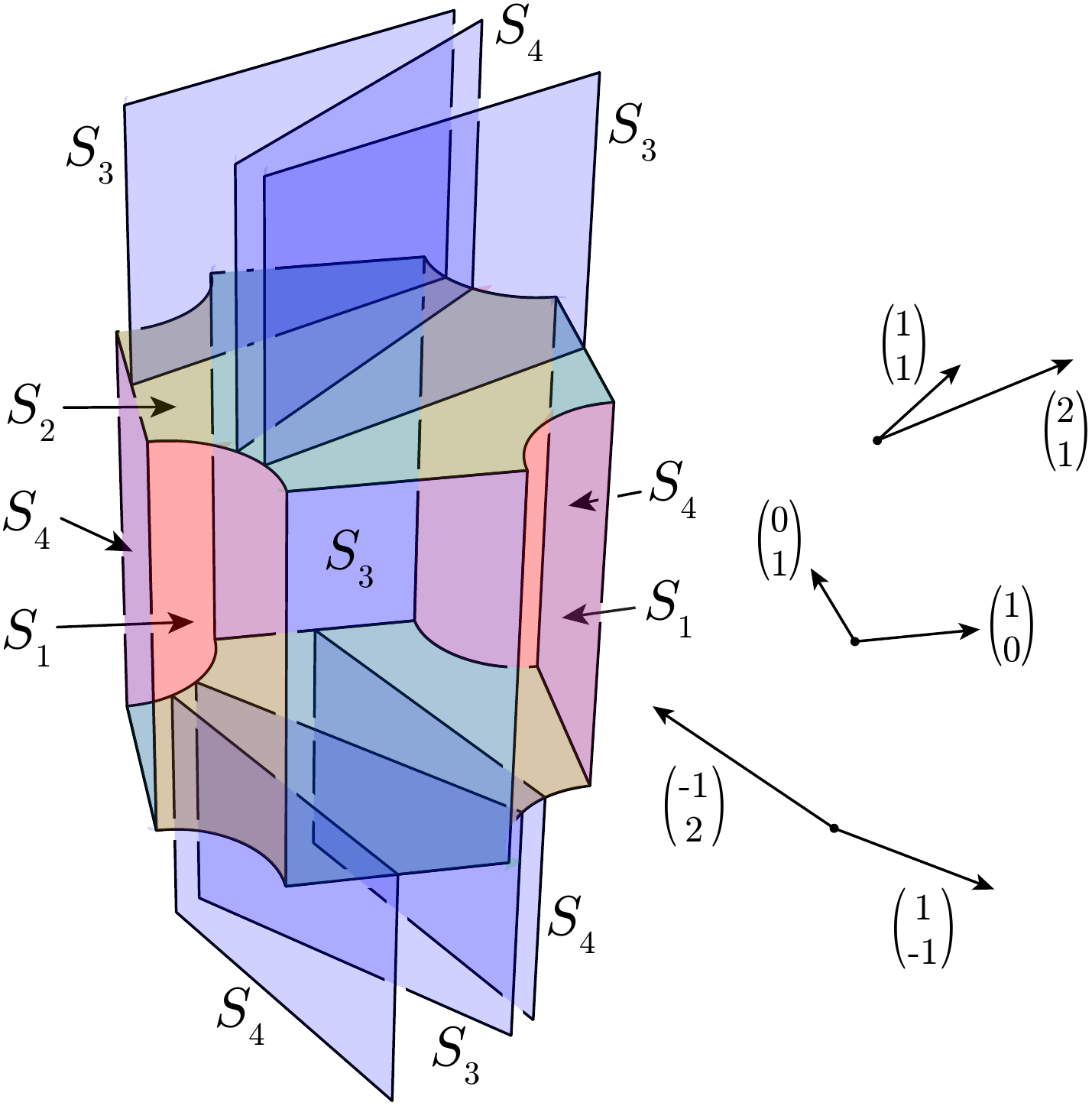}
\caption{A hierarchy for the figure 8 knot complement.}
\label{fig8_hierarchy.pdf}
\end{figure}

\begin{Exa}[The figure 8 knot complement] See Figure \ref{fig8_hierarchy.pdf}. We view the figure 8 knot complement as the punctured torus bundle with monodromy $$\left(\begin{array}{cc}2&1\\1&1\end{array}\right).$$ A fundamental domain for the manifold is a cube with its vertical edges deleted, the vertical faces of the cube identified by horizontal Euclidean translation, and the bottom face of the cube identified to the top via the monodromy. In the figure we see this cube, together with parts of surfaces in images of the cube under the monodromy and its inverse. The first surface of the hierarchy, $S_1$ \ifcolour
 (shaded red)
 \fi 
 is a boundary parallel torus. The second, $S_2$ \ifcolour
 (shaded green)
 \fi
 is the punctured torus fibre, cut off at $S_1$. The third and fourth, $S_3$ and $S_4$
 \ifcolour
 (shaded blue)
 \fi are vertical discs parallel to generators of the fundamental group of the punctured torus fibre.\\

\begin{figure}[htbp]
\centering
\includegraphics[width=1.0\textwidth]{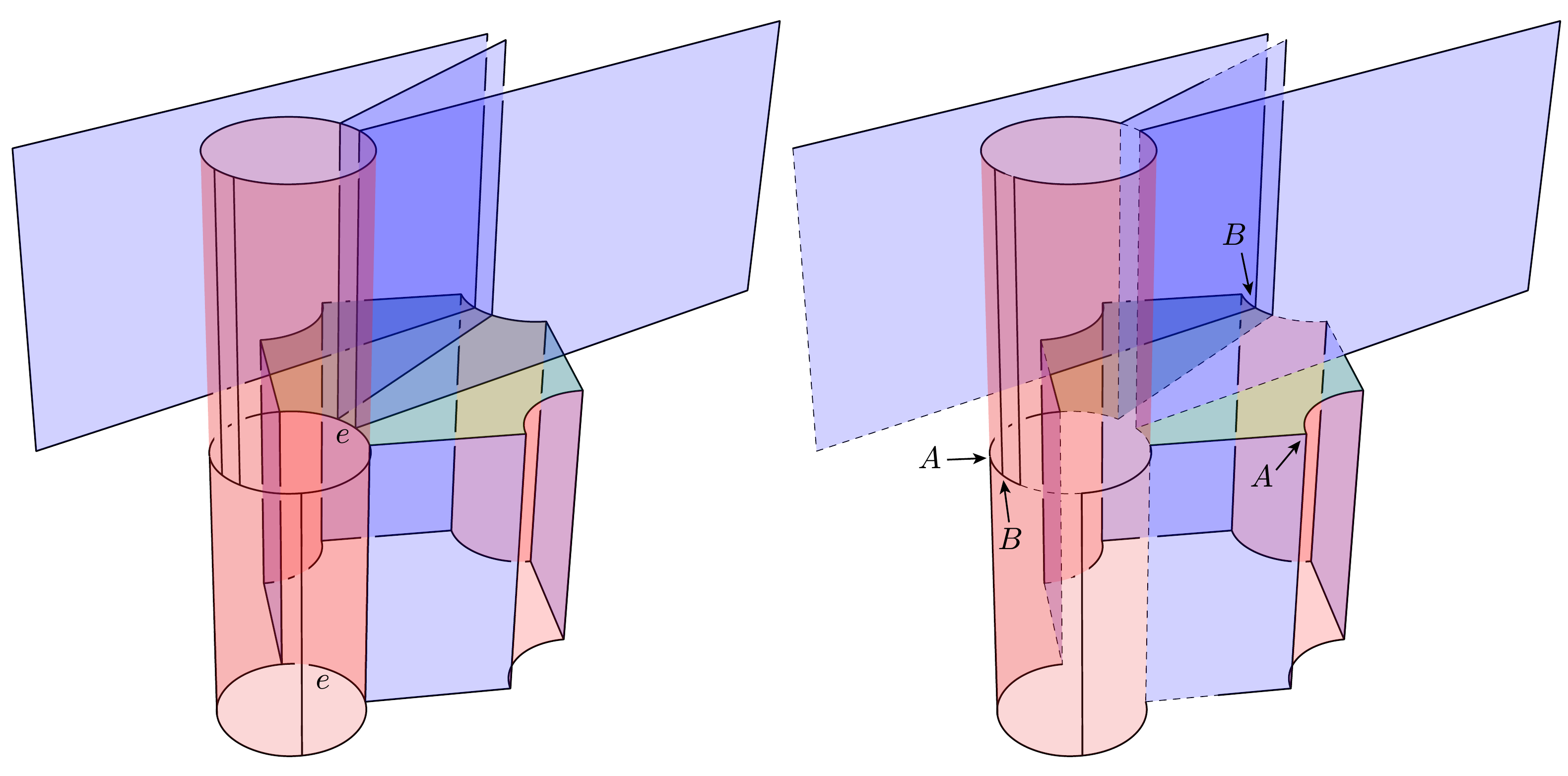}
\caption{Crushing an edge of the triangulation dual to the hierarchy corresponds to removing the faces of the hierarchy incident to an edge (here labelled $e$, note that we see two translates of it). After removing these faces, edges with only two faces incident are drawn with dashed lines.}
\label{fig8_hierarchy_crush.pdf}
\end{figure}

The triangulation dual to this hierarchy has one finite vertex and one ideal vertex. We crush an edge of this triangulation to produce an essential and strongly essential triangulation. See Figure \ref{fig8_hierarchy_crush.pdf}. On the left we see the hierarchy again. We need to crush an edge joining the finite and ideal vertices, or equivalently, remove a face of $S_1$. One choice is to remove one of the faces of $S_1$ incident to the edge marked $e$. However, the same face is incident to $e$ twice, so we must remove a face of $S_2$ as well, otherwise the resulting hierarchy would have a face with boundary. On the right, after removing these faces, many of the remaining faces coalesce into larger faces as there are no longer curves of triple points between them. In the dual picture, this corresponds to edges of the triangulation being identified after the crushing. There are only two quadruple points remaining, labelled $A$ and $B$. Thus the corresponding triangulation has only two tetrahedra, and in this case we get the canonical ideal triangulation of the figure 8 knot complement. 
\end{Exa}


 \section{Angle structures}

We give a quick summary of strict angle structures and semi-angle structures. A detailed discussion may be found in e.g. \cite{luo_tillmann_2008}. Given an ideal triangulation, each ideal tetrahedron has a real number, called an \emph{angle}, associated with each of its edges, so that opposite edges have the same angles. The angle equations are then that the sum of the three angles at an ideal vertex for each such tetrahedron is $\pi$ and the sum of the angles around each edge in the triangulation is $2\pi$. A strict angle structure is a solution of the angle equations for which each angle is strictly positive, whereas a semi-angle structure is a solution with each angle non-negative. 
 
 \begin{Thm}\label{sas implies strongly essential}
 Suppose that $M$ is a 3--manifold with an ideal triangulation $\mathcal T.$ 
  If $\mathcal T$ admits a strict angle structure, then $\mathcal T$ is strongly essential.
 If $\mathcal T$ admits a semi-angle structure, then it is essential. 
  \end{Thm}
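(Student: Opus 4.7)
The plan is to prove both statements by contradiction via a combinatorial Gauss--Bonnet computation on a disc that witnesses the failure of (strong) essentiality. I treat the two cases together, as they differ only in the disc used and in where the strict inequality is extracted.

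Suppose $\mathcal{T}$ admits a semi-angle structure but some ideal edge $e$ is admissibly null-homotopic, or $\mathcal{T}$ admits a strict angle structure but two ideal edges $e,f$ are parallel. In the first case Lemma~3.4 provides a map $H\colon D \to M^c$ of a disc whose boundary is the concatenation of $\gamma_e \cap M^c$ with a path in $\partial M^c$; in the second case Lemma~\ref{lem:parallel ideal edges in core} provides a map with $\partial D$ split into four arcs, alternating between $\gamma_e \cap M^c$, an arc in $\partial M^c$, $\gamma_f \cap M^c$ (reversed), and another arc in $\partial M^c$. Put $H$ in general position with respect to the 2-skeleton of the truncated triangulation; the preimage $\Gamma = H^{-1}(\mathcal{T}^{(2)})$ is a 1-complex in $D$. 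Using innermost disc and bigon moves, available because each truncated tetrahedron is a ball, simplify $\Gamma$ so that each complementary region of $\Gamma$ in $D$ maps to a single normal disc---a normal triangle or quadrilateral in a tetrahedron, or a boundary triangle of the truncation---and so that at each interior vertex of the resulting cell decomposition of $D$ the disc sweeps out a full link around the corresponding edge of $\mathcal{T}$.

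Assign to each corner of a 2-cell of $D$ the angle $\alpha_{\sigma,\epsilon}$ prescribed by the angle structure at the corresponding edge $\epsilon$ of the ambient tetrahedron $\sigma$ (for boundary triangles use the vertex-angles at the truncated ideal vertex). Combinatorial Gauss--Bonnet for the disc then reads
\[
\sum_{F}\Bigl(\textstyle\sum_{c \subset F}\alpha_c - (|F|-2)\pi\Bigr) \;+\; \sum_{v \in \mathrm{int}\,D}(2\pi - \theta_v) \;+\; \sum_{v \in \partial D}(\pi-\theta_v) \;=\; 2\pi\chi(D) \;=\; 2\pi.
\]
Each normal triangle contributes $0$ to the face sum by the ideal-vertex condition $\sum\alpha=\pi$; each normal quadrilateral contributes $-2\beta \leq 0$, where $\beta$ is the angle of the opposite-edge pair of the tetrahedron not meeting the quadrilateral, with strict inequality under any strict angle structure. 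Each interior vertex contributes $0$ by the edge condition $\sum\alpha=2\pi$ applied to the full link of the edge. Hence the boundary sum $\sum_{v \in \partial D}(\pi-\theta_v)$ must equal $2\pi$ in the semi-angle case, and exceed $2\pi$ in the strict case whenever a quadrilateral 2-cell is present. A direct analysis of the boundary, using that $\partial M^c$ carries an induced Euclidean triangulation (from the ideal-vertex condition) and that boundary arcs along an ideal edge traverse a half-link of angle $\pi$, shows that the boundary sum is in fact at most $2\pi$, yielding the contradiction.

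The main obstacle is the bookkeeping for the boundary sum. One must correctly track contributions from vertices on the $\partial M^c$-arcs (which together amount to the exterior turning of a curve in a flat surface), from vertices on the $\gamma_e$- and $\gamma_f$-arcs (each smoothly contributing $0$ since the disc meets an ideal edge in a half-link of angle $\pi$), and from the two or four ``outer corners'' of $\partial D$ at transitions between these boundary types. One must also rule out the degenerate case in which $D$ consists entirely of normal triangles, which would represent a subdisc of a vertex link with boundary confined to $\partial M^c$, and so could not witness the failure of essentiality in the first place. Making the corner analysis and this triangles-only exclusion precise---especially extracting the strict inequality in the strict angle case---is the technical heart of the proof.
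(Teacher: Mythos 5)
Your quadrilateral-disc argument for the strongly-essential claim is close in spirit to what the paper does for that part: the paper also takes the mapped-in homotopy disc, makes it transverse to the truncated $2$--skeleton in a least-weight position, and runs a combinatorial Gauss--Bonnet count, quoting Lackenby's classification of the complementary disc types. However, your treatment of the ``essential'' claims is quite different from the paper's, and several of the steps you flag as needing care in fact fail.

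The most serious gap is your ``half-link of angle $\pi$'' claim. At a boundary vertex of $D$ lying on $\gamma_e\cap M^c$, the interior angle $\theta_v$ is the sum of dihedral angles the disc sweeps on one side of the ideal edge, and there is no constraint forcing this to equal $\pi$; it can be a single small dihedral angle, or exceed $2\pi$ if the map wraps around the edge. Consequently the bound $\sum_{v\in\partial D}(\pi-\theta_v)\le 2\pi$ that you want to compare against the Gauss--Bonnet identity has no justification, and the contradiction does not follow. A related problem is your normalization: near the $\gamma_e$-arcs the disc must meet an edge of $\mathcal T$ along an arc, so the complementary regions there are not normal triangles or quadrilaterals; Lackenby's argument does not reduce to a normal pattern but classifies more general disc types and shows each contributes non-positively. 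Your claim that interior vertices contribute exactly $0$ is also too strong: for a singular map, the link of an interior preimage point of an edge can wrap $k\ge 1$ times, giving $2\pi-2\pi k\le 0$ (which is still the right sign, but your equality is not available).

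Finally, even granting all of the above, your argument for the semi-angle case has no source of strict negativity. A normal quadrilateral in a flat (taut) tetrahedron contributes exactly $0$, so the face sum can be identically zero and the Gauss--Bonnet identity is satisfied with no contradiction. This is precisely the obstacle the paper routes around: for the ``essential'' claims it does not work with a disc at all. It passes to the cover $\overline M$ corresponding to the peripheral torus $T$, pushes $\overline T$ across the lifted edge, normalizes using the edge as a barrier to obtain a normal torus $T^*$ not normally isotopic to the vertex link, and then (i) in the strict case uses the quad negativity $-2\gamma<0$ to show $\chi(T^*)<0$, contradiction; (ii) in the semi-angle case uses thin position (Thompson/Stocking/Schleimer) to produce an \emph{almost normal} torus in the product region carrying a single octagon, whose formal contribution $2(\alpha-2\pi)$ is strictly negative even when all quad angles vanish. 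That octagon is the missing strict term; without something playing its role, the disc-based Gauss--Bonnet computation cannot close the semi-angle case.
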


\begin{proof}

Consider the case of a strict angle structure on $\mathcal T$. We establish first that $\mathcal T$ is essential. So suppose not, i.e.\thinspace there is an edge $\alpha$ of $\mathcal T$ which is homotopic, keeping its ends on a boundary torus $T$, into $T$. Construct the covering $\overline M$ corresponding to the image of the subgroup $\pi_1(T)$ in $\pi_1(M)$. Then there are homeomorphic lifts $\overline T$ of $T$ and $\overline \alpha$ of $\alpha$ in $\overline M$, so that ends of $\overline \alpha$ lie in $\overline T$. There is a collar of the boundary component $\overline T$ of $\overline M$ which contains $\overline \alpha$. We can push $\overline T$ across $\overline \alpha$ to give a new surface $\overline T^\prime$ that is possibly not normal. Using the barrier arguments of \cite{0-efficient}, the edge $\overline \alpha$ is a barrier for normalising the incompressible surface $\overline T^\prime,$ and hence we shrink $\overline T^\prime$ to obtain a normal surface $T^*$ in $\overline M$, which is isotopic but not normally isotopic to $\overline T$ since $T^*$ is disjoint from $\overline \alpha.$

According to the discussion of the result of Casson and Rivin in \cite{lackenby00} (see also \cite{luo_tillmann_2008}, Theorem 3), there cannot be any normal torus $T^*$ which is not peripheral in a triangulation with a strict angle structure. A direct argument is as follows. Note that $\overline M$ has a lifted strict angle structure. We apply a formal Gauss-Bonnet argument (cf.\thinspace \cite{luo_tillmann_2008}, Proposition 13).  Namely if we put the dihedral angles of edges at the vertices of the normal cells of $T^*$, then a triangle has angle sum $\pi$ and a quadrilateral has angle sum $2\alpha +2\beta<2\pi$, where $\alpha,\beta,\gamma$ are the 3 angles at edges of the ideal tetrahedron containing the quadrilateral. Adding zero for each triangle and $2\alpha+2\beta-2\pi=-2\gamma$ for each quadrilateral gives $2\pi \chi (T^*)$. We conclude that the torus $T^*$ has negative Euler characteristic if it has any quadrilateral disks. But since $T^*$ is not normally isotopic to a peripheral torus in the ideal triangulation, it must have quadrilaterals. So this gives a contradiction to the existence of the homotopy of an edge into a boundary torus. This completes the proof that $\mathcal T$  is essential.

To show that $\mathcal T$ is strongly essential, assume to the contrary that there are edges $\alpha_1,\alpha_2$ which are homotopic keeping their ends on peripheral tori. We can truncate $M$ and $\mathcal T,$ and finish the proof with the following argument of Lackenby \cite{lackenby00}. Consider the homotopy as a mapped in quadrilateral disc with two opposite boundary arcs running along peripheral tori and two arcs running along truncated copies of $\alpha_1,\alpha_2$. Making this map transverse to the truncated triangulation and least weight, we can classify all the possible disc types in the complement of the pullback of the $2$-skeleton of $\mathcal T$. 
Lackenby shows that all the disc types have non-positive formal contribution to the Euler characteristic in a combinatorial Gauss--Bonnet formula. 
Moreover there are some negative contributions so this contradicts the fact that the Euler characteristic is one. So this completes the proof that the triangulation is strongly essential if it supports a strict angle structure.

\begin{figure}[htb]
\centering
\includegraphics[width=0.4\textwidth]{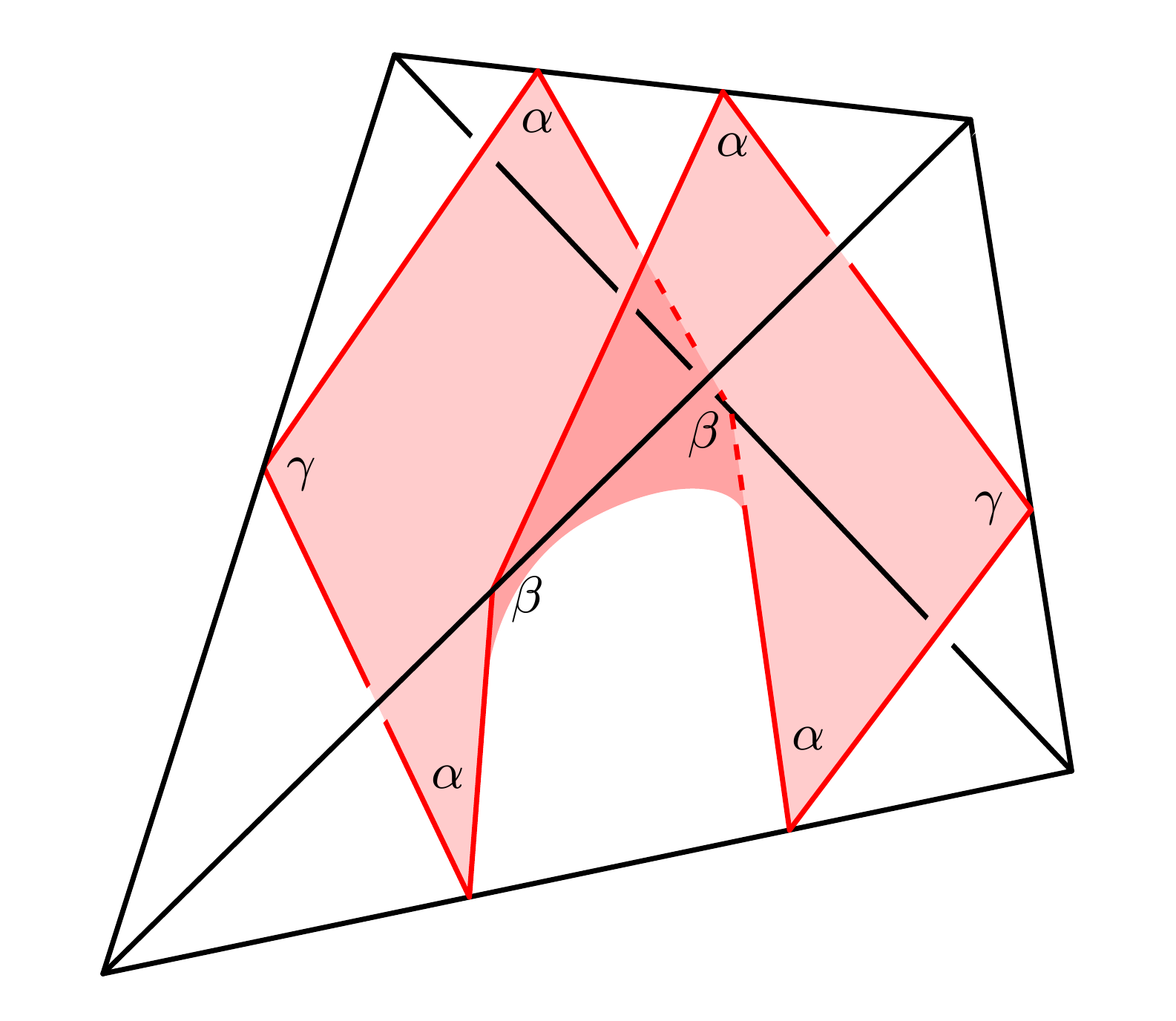}
\caption{An octagonal disc from an almost normal surface meeting the edge with angle $\alpha$ twice contributes negative combinatorial area $2(\alpha - 2\pi)$ to the combinatorial Gauss-Bonnet formula.}
\label{normal_octagon.pdf}
\end{figure}

Suppose next that $\mathcal T$ has a semi-angle structure. As in the first paragraph, if $\mathcal T$ was not essential, there would be a normal torus $T^*$ embedded in the covering space $\overline M$ which has a lifted taut triangulation. A standard argument using thin position (which goes back to Thompson~\cite{thompson-1994} and Stocking~\cite{stocking-2000}, with a detailed discussion given in Schleimer~\cite{schleimer-2001}, Theorem 6.2.2) shows that in the product region between the vertex linking torus and the normal torus $T^*$ is an almost normal torus $T^\prime$ which has exactly one octagon. However, this octagon makes a negative contribution to the formal Euler characteristic of $T^\prime$ (see Figure~\ref{normal_octagon.pdf}), thus contradicting the combinatorial Gauss-Bonnet formula.
This completes the proof of this case. 
\end{proof}


\begin{Exa}
We first give simple examples of taut triangulations which are not strongly essential.
Suppose we take any example of an ideal triangulation with a taut structure and choose a pair of adjacent triangular faces, sharing an edge $e$.   Assume also that the $\pi$ angles at $e$ are on opposite sides of the two faces.  Unglue the triangulation along $e$ and these two faces, and glue onto the resulting four faces a pillow consisting of two tetrahedra with two faces and a degree two edge in common (sometimes this is called a 0--2 move).
It is then easy to see that the taut structure extends over this new triangulation. 
However the degree two edge is an obstruction to the existence of a strict angle structure (see \cite{kang_rubinstein_2005}). 
Moreover this triangulation clearly has two edges which are homotopic keeping their ends on the boundary tori, namely the two edges that the edge $e$ is split into. So we see explicitly that this triangulation is not strongly essential. 
\end{Exa}

\begin{Exa}[m136]
We now give an example of a taut triangulation which is strongly essential, but does not support a strict angle structure. This is
Example 7.7 of \cite{HRS}, which is a triangulation of the manifold m136 from the SnapPea \cite{SnapPy} census. For convenience we reproduce the combinatorial data of this example in Tables~\ref{table_example_gluings} and \ref{table_example_edges}.

To reconstruct the triangulation, take 7 tetrahedra with vertices labelled 0 through 3, all with consistent orientations, and make the appropriate gluings given in Table~\ref{table_example_gluings}. For example, the top left entry in the table tells us to glue the face of tetrahedron \#0 with vertices labelled 0,1,2 to the face of tetrahedron \#1 with vertices labelled 3,1,2 in the orientation that matches the vertices in the order given. The identifications of edges can be deduced from this information, and the resulting edge cycles are given in Table~\ref{table_example_edges}.

\begin{table}[htb]
\begin{center}
{\small
\begin{tabular}{ |r|cccc|c| }   
\hline
Tetrahedron  & Face 012 & Face 013 & Face 023 & Face 123 & Shape parameter at 01\\ 
\hline
0 & 1 (312) & 4 (302) & 6 (130) & 4 (132) & $\hspace{0.9cm} 2i$\\
1 & 3 (102) & 2 (012) & 2 (203) & 0 (120) & $ -1+2i$\\
2 & 1 (013) & 6 (321) & 1 (203) & 4 (031) & $\hspace{0.3cm}\frac{3}{5} + \frac{1}{5}i$\\
3 & 1 (102) & 6 (230) & 5 (021) & 5 (023) & $-1\hspace{0.6cm}$\\
4 & 5 (312) & 2 (132) & 0 (130) & 0 (132) & $\hspace{0.3cm}\frac{1}{5} + \frac{2}{5} i$\\
5 & 3 (032) & 6 (012) & 3 (123) & 4 (120) & $2\hspace{0.4cm}$\\
6 & 5 (013) & 0 (302) & 3 (301) & 2 (310) & $\hspace{0.3cm}\frac{1}{2} + \frac{1}{2}  i$\\ 
\hline
\end{tabular}
\vspace{0.3cm}
}
\caption{Gluing data for a triangulation of m136}
\label{table_example_gluings}
\end{center}
\end{table}

\begin{table}[htb]
\begin{center}
{\small
\begin{tabular}{ |r|r|l| }   
\hline
Edge  & Degree &  Tetrahedron (Vertex numbers)\\ 
\hline
0 & 4 & 0 (01), 4 (30), 2 (21), 1 (31) \\
1 & 4 & 0 (02), 1 (32), 2 (30), 6 (13) \\
2 & 10 & 0 (03), 6 (10), 5 (10), 3 (30), 6 (02), 5 (03), 3 (13), 6 (30), 0 (23), 4 (32)\\
3 & 10 & 0 (12), 4 (13), 2 (32), 1 (30), 2 (20), 1 (02), 3 (12), 5 (02), 3 (02), 1 (12)\\
4 & 6 & 0 (13), 4 (02), 5 (32), 3 (32), 5 (12), 4 (12)\\
5 & 4 & 1 (01), 2 (01), 6 (32), 3 (10)\\
6 & 4 & 2 (13), 6 (21), 5 (31), 4 (01)\\
\hline
\end{tabular}
\vspace{0.3cm}
}
\caption{Edge cycles in the triangulation}
\label{table_example_edges}
\end{center}
\end{table}

It is shown in \cite{HRS} that the triangulation does not admit a strict angle structure. However, it can be checked (for instance, using Regina \cite{Regina}) that the triangulation does admit a taut angle structure. We now show that the triangulation is strongly essential. This shows that unfortunately, having a strongly essential triangulation is necessary but not sufficient to produce strict angle structures, even starting with a taut structure.

Although the property of a triangulation being strongly essential is a homotopic property, here we use the geometric structure. The last column of Table \ref{table_example_gluings} gives shape parameters for the seven tetrahedra that form a solution to Thurston's gluing equations corresponding to the complete hyperbolic structure. The corresponding developing map $D\co \widetilde{M} \to \H^3$ has the property that distinct cusps of $\widetilde{M}$ map to distinct points of $\partial \H^3$. If there are two homotopic edges in $\widetilde{M}$, then the corresponding geodesics in $\H^3$ will coincide. If we had a triangulation with all positive volume tetrahedron shapes corresponding to the complete structure (so a geometric triangulation), then we would be done. This follows because the geodesic image $g$ of an edge in $\H^3$ is surrounded by positive volume tetrahedra, and no other tetrahedra in the developing map can intersect these other than via the face identifications with their neighbours. In particular, no other tetrahedron in $\H^3$ can have $g$ as an edge.  (Alternatively, a geometric triangulation gives a strict angle structure, so we can apply Theorem \ref{sas implies strongly essential}.) 

For the shapes that we do have, each positive volume tetrahedron in $\H^3$ again intersects other tetrahedra only via face identifications. So if there were a pair of homotopic edges, we would be able to walk from one to the other through a sequence of flat tetrahedra. In our case the flat tetrahedra are tetrahedra 3 and 5. Figure \ref{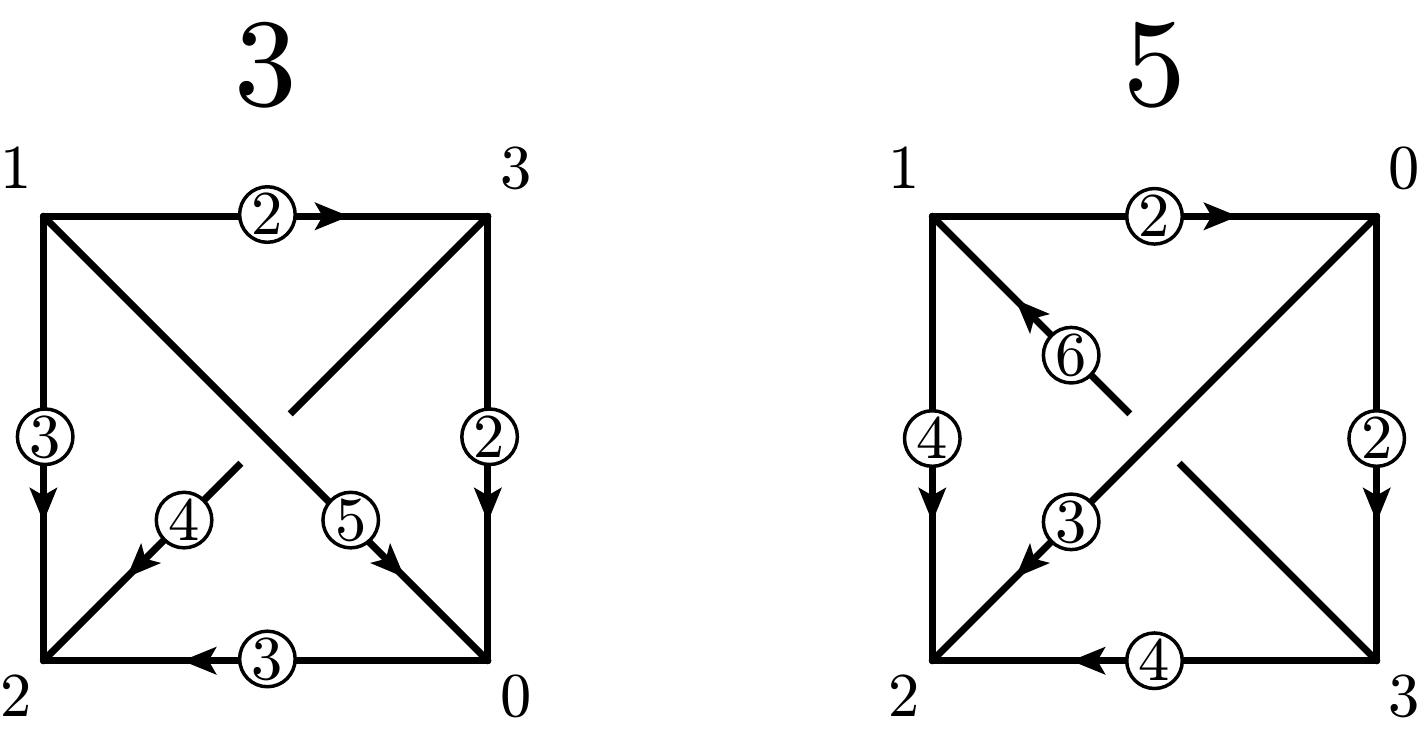} shows the way in which the two tetrahedra are glued to each other. Notice that they are glued to each other on the bottom two faces of tetrahedron 3 and the top two faces of tetrahedron 5. Moreover, as we develop through copies of these tetrahedra in $\widetilde{M}$, each new pair of tetrahedra are parabolic translates of the previous pair (around the vertex labelled 2 in both tetrahedra). Thus, no pair of edges are mapped to the same geodesic in $\H^3$.

We use a similar argument in the proof of Theorem \ref{cusped hyperbolic implies strongly essential}. 

\begin{figure}[htb]
\centering
\includegraphics[width=0.5\textwidth]{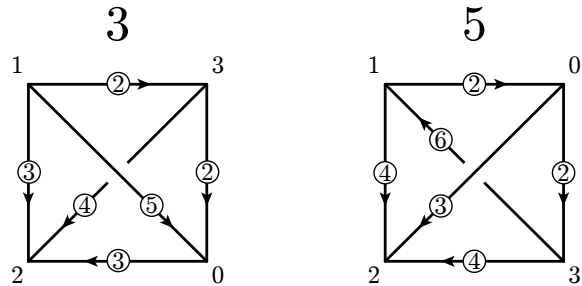}
\caption{The two flat tetrahedra in the triangulation with shapes corresponding to the complete structure. Edges are numbered in circles with arrows indicating the orientation of each edge relative to the tetrahedra. The numbering and orientations match the ordering given in Table \ref{table_example_edges}. Vertex numbers are also shown. The flat shapes for these tetrahedra are such that the diagonal edges have dihedral angles of $\pi$.}
\label{m136_ex_flat_tetrahedra.pdf}
\end{figure}
\end{Exa}


 \section{Metrics with constant curvature}\label{constant curvature}
 
 \begin{Thm}\label{cusped hyperbolic implies strongly essential}
Assume that $M$ is a compact orientable hyperbolic 3--manifold $M$ with $h>0$ incompressible torus boundary components. 
Then $M$ has a strongly essential ideal triangulation. 
 
\end{Thm}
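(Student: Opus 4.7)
The plan is to apply the third construction indicated in the introduction: subdivide the Epstein--Penner polyhedral decomposition \cite{EP} of $N = \mathrm{Int}(M)$ into ideal tetrahedra and verify strong essentiality directly from the hyperbolic geometry. Since $M$ is compact orientable hyperbolic with $h>0$ incompressible torus boundary components, $N$ is a complete finite-volume hyperbolic $3$--manifold with $h$ cusps. Choosing a collection of pairwise disjoint horoball neighbourhoods of the cusps (e.g.\thinspace of equal volume), the Epstein--Penner construction produces a canonical decomposition $\mathcal{P}$ of $N$ into finitely many ideal convex hyperbolic polyhedra whose only $0$--cells are the $h$ ideal vertices.

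Next, I would refine $\mathcal P$ to an ideal triangulation $\mathcal T$ without introducing new $0$--cells. Fix a total ordering on the combinatorial vertices of $\mathcal P$ (for instance, order the cusps of $N$ and break ties among combinatorial vertices of a polyhedron by a local convention) and perform a \emph{pulling} (lexicographic) triangulation on each polyhedron of $\mathcal P$ using this ordering; compatibility on shared $2$--faces is then automatic because the vertex ordering is global. Alternatively, one may perturb the choice of horoball weights generically so that $\mathcal P$ is already simplicial. The crucial geometric property preserved by this subdivision is that every tetrahedron of $\mathcal T$ is realised as a non-degenerate (positive-volume) ideal hyperbolic tetrahedron: lifting to the universal cover via the developing map $D \co \widetilde N \to \H^3$, each tetrahedron has four distinct parabolic fixed points as its ideal vertices, inherited from those of a convex ideal polyhedron of $\mathcal P$.

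Strong essentiality then follows from the same lifting argument used in the example m136. Any ideal edge $e$ of $\mathcal T$ lifts under $D$ to a complete geodesic in $\H^3$ whose two endpoints are distinct parabolic fixed points; hence $e$ is not admissibly null-homotopic and $\mathcal T$ is essential. If ideal edges $e_1,e_2$ of $\mathcal T$ were parallel, there would exist lifts $\tilde e_1, \tilde e_2$ with the same pair of endpoints on $\partial \H^3$; but two complete geodesics in $\H^3$ sharing a pair of ideal endpoints must coincide, so $\tilde e_1 = \tilde e_2$ and hence $e_1 = e_2$, showing that no two distinct edges of $\mathcal T$ are parallel.

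The main obstacle is the purely combinatorial step of subdividing $\mathcal P$ consistently into ideal tetrahedra without introducing Steiner points: triangulations of the $2$--dimensional faces of $\mathcal P$ must agree from both sides, and each $3$--cell must then be triangulated compatibly with the induced triangulation of its boundary. A uniform pulling construction from a globally ordered vertex set handles both requirements simultaneously, and once the subdivision is in place the geometric verification of strong essentiality through the developing map is short and self-contained.
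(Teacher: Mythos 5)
Your high-level idea is the same as the paper's: subdivide the Epstein--Penner decomposition and show that each ideal edge develops to a genuine geodesic with distinct endpoints in $\partial\H^3$. What differs is the subdivision step, and this is where your argument has a genuine gap. You claim that a ``pulling'' triangulation from a global vertex ordering makes the induced triangulations of shared $2$--faces automatically agree. But the combinatorial vertices of an Epstein--Penner polyhedron are identified downstairs to at most $h$ cusps, so any global ordering has ties within a single polyhedron; breaking ties ``by a local convention'' re-introduces exactly the problem one needs to avoid, since the two polyhedra sharing a face need not order the vertices of that face the same way. (Lifting the vertices to distinct points of $\partial\H^3$ and ordering there is not an option either, because that ordering would have to be equivariant under the holonomy representation, and $\PSL(2,\C)$ preserves no total order on $\partial\H^3$.) Your alternative suggestion --- perturb the horoball weights generically so $\mathcal P$ is already simplicial --- is not known to succeed; the existence of a positive-volume geometric ideal triangulation of every cusped hyperbolic $3$--manifold is an open question, so a proof cannot assume it.

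The paper avoids this precisely by \emph{not} insisting on a geometric triangulation. Following Algorithm~5.2 of \cite{HRS}, it triangulates each polyhedron by coning and then inserts zero-volume ``layered'' tetrahedra to bridge incompatibilities between the induced triangulations of identified $2$--faces. The price is that the resulting $D\co\widetilde M\to\H^3$ is only a pseudo-developing map, so equality $D(\tilde e_1)=D(\tilde e_2)$ no longer immediately yields $\tilde e_1 = \tilde e_2$; a separate analysis of the bridging layers (Algorithm~5.1 of \cite{HRS}) is needed to show no two distinct edges end up at the same pair of ideal points. Your concluding geometric argument --- distinct parabolic fixed points for each lifted edge, uniqueness of geodesics in $\H^3$ --- is essentially the one the paper uses and is correct \emph{granting} a consistent subdivision; the missing ingredient is a construction of that subdivision for which the paper's bridging device is the standard fix.
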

 
\begin{proof}
This follows relatively easily from techniques in \cite{HRS}. Given our manifold $M$, we construct an ideal triangulation using Algorithm 5.2 of \cite{HRS}. Briefly, this uses the Epstein-Penner decomposition~\cite{EP} of $M$ into convex ideal hyperbolic polyhedra. These polyhedra are further subdivided into ideal pyramids using a coning procedure. For our purposes, the cone vertex on each polyhedron can be chosen arbitrarily. The pyramids are further subdivided into ideal tetrahedra. Where the induced triangulations on the glued faces of adjacent polyhedra disagree, we insert layered triangulations to bridge between them. This construction produces an ideal triangulation of $M$ that comes with ideal hyperbolic shapes for all of the tetrahedra, those coming from the polyhedral cells having positive volume while the tetrahedra in the bridging layered triangulations have volume zero. Thus there is a pseudo-developing map $D\co \widetilde{M}\to\H^3$ corresponding to the complete hyperbolic structure on $M$. The edges of the triangulation all map to geodesics in $\H^3$. Since a geodesic must have distinct endpoints in $\H^3$, this means that each edge in $\widetilde{M}$ must also have distinct endpoints, and so none of them are null-homotopic, and so the edges are all essential. 

A similar argument shows that two edges are homotopic fixing their endpoints if and only if they map to the same geodesic in $\H^3$. Our pseudo-developing map shows that this could only possibly happen within one of the layered triangulation regions. However, an analysis of the construction of the triangulations in these regions (Algorithm 5.1 of \cite{HRS}) shows that no pair of edges in such a region join the same vertices of the associated ideal polygon. This shows that the triangulation is also strongly essential.
\end{proof} 
 
 \begin{Rem}
Theorem~\ref{sas implies strongly essential} gives a slightly different construction proving Theorem~\ref{cusped hyperbolic implies strongly essential} in the case of a hyperbolic link complement in $S^3$. Corollary~1.2 of \cite{HRS} states that such a manifold admits an ideal triangulation with a strict angle structure. Then by Theorem \ref{sas implies strongly essential}, this triangulation is strongly essential.
\end{Rem}


Given a closed Riemannian $3$--manifold with constant curvature, our next aim is to mimic the proof of Theorem \ref{cusped hyperbolic implies strongly essential}, starting with the Delaunay cell decomposition dual to  the cut locus of a point, to obtain a strongly essential triangulation with a single vertex.
Without loss of generality, we may normalise the metric to have constant curvature $-1,$ $0$ or $+1.$ An extra condition on the diameter is required in the case of constant positive curvature.  

For constant zero or negative curvature, the cut locus from any point is always a spine, obtained from the boundary of a Dirichlet domain, but is not always generic so its dual need not be a triangulation. As an example, the flat metric on the torus obtained by gluing up a rectangle gives a non-generic spine but gluing up a centrally symmetric hexagon gives a generic spine. For the cut locus in the hexagon case is a theta curve and the dual is the usual essential one-vertex triangulation of a torus. 

\begin{Thm}\label{Dirichlet}
Suppose that $M$ is a closed Riemannian manifold of dimension $3$ with a metric with constant curvature $-1,$ $0$ or $+1.$ In the case of curvature $+1$,  also assume that the diameter of the manifold is smaller than  $\pi$.  Then the cell decomposition dual
to the cut locus of any point $x_0$ can be subdivided to give  a strongly essential one-vertex triangulation. 
\end{Thm}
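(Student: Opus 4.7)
The strategy mirrors Theorem~\ref{cusped hyperbolic implies strongly essential}. Pass to the universal cover $\widetilde{M}\in\{\H^3,\R^3,S^3\}$ with deck group $\Gamma$ acting freely and cocompactly by isometries, and let $\mathcal{P}=\pi^{-1}(x_0)$. The Voronoi tessellation of $\widetilde{M}$ with sites $\mathcal{P}$ is $\Gamma$-equivariant, its cells are geodesically convex polytopes, and the boundary of a single Voronoi cell projects onto the cut locus of $x_0$ in $M$. The dual Delaunay complex has vertex set $\mathcal{P}$ and higher-dimensional cells that are geodesically convex polytopes spanned by subsets of $\mathcal{P}$; its $\Gamma$-quotient is precisely the one-vertex cell decomposition of $M$ dual to the cut locus.

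Next, I would subdivide each Delaunay cell in $\widetilde{M}$ into geodesic simplices with vertices only in $\mathcal{P}$, in a $\Gamma$-equivariant way. Since $\Gamma$ acts freely by isometries and every compact Delaunay cell has a unique Riemannian centre of mass, no nontrivial element of $\Gamma$ can stabilise a Delaunay cell; all cell stabilisers are thus trivial and subdivisions of $\Gamma$-orbit representatives extend equivariantly without obstruction. I would do this hierarchically: first fix a fan triangulation (from a chosen apex) on each orbit of Delaunay $2$-faces; then cone each Delaunay $3$-cell $C$ from a chosen vertex $v\in C$ compatibly with the fans already fixed on $\partial C$. With only finitely many orbits to manage and enough freedom in the vertex choices, these selections can be made globally consistent. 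The resulting $\Gamma$-invariant geodesic triangulation of $\widetilde{M}$ descends to a one-vertex triangulation $\tri$ of $M$.

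Essentiality and strong essentiality of $\tri$ then follow geometrically, exactly as in Theorem~\ref{cusped hyperbolic implies strongly essential}. Each edge of $\tri$ lifts to a geodesic segment between two distinct points of $\mathcal{P}$ and so is not null-homotopic, giving essentiality. If edge loops $\alpha_1,\alpha_2$ were parallel, lifting both based at a common $p\in\mathcal{P}$ would produce two geodesic segments sharing the terminal point $q\in\mathcal{P}$ determined by the deck element read off from the path homotopy. In $\H^3$ and $\R^3$ the minimising geodesic between two distinct points is unique; in $S^3$ the diameter hypothesis $\mathrm{diam}(M)<\pi$ keeps the relevant distances $d(p,q)$ strictly below $\pi$, and the same uniqueness holds. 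The two lifts therefore coincide, forcing $\alpha_1=\alpha_2$ in $M$.

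I expect the main obstacle to be the equivariant and globally consistent subdivision step, in particular synchronising the fan apex on each Delaunay $2$-face with the coning vertex on each of the two $3$-cells meeting that face, across all $\Gamma$-orbits simultaneously. A cleaner fallback is to first perturb $x_0$ to a nearby generic point whose Delaunay decomposition is already simplicial (so no subdivision is needed) and settle the theorem in that case before transferring the conclusion to the given $x_0$ by a limiting argument. Some additional care is needed in the $S^3$ case to exclude Delaunay-adjacent pairs in $\mathcal{P}$ at distance exactly $\pi$; here one uses $\mathrm{diam}(M)<\pi$ together with the fact that a free isometry of $S^3$ mapping a point to its antipode must be $-\mathrm{Id}$, so that antipodal pairs in $\mathcal{P}$ only arise for quotients by groups containing the antipodal map and can be handled separately.
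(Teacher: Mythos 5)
Your overall strategy is the same as the paper's: pass to the universal cover, take the Dirichlet/Voronoi tessellation centred at the lifts of $x_0$, dualise to get the Delaunay cell decomposition with vertex set the orbit of $x_0$, project to a one-vertex cell decomposition of $M$, and subdivide into a triangulation whose edges develop to genuine geodesic segments between distinct orbit points. The treatment of curvature $+1$ via the diameter bound and the essentiality argument (distinct geodesic endpoints) also match the paper in substance.

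Where you genuinely diverge is the subdivision step, and this is where your proposal has a gap. You propose a globally consistent, $\Gamma$-equivariant coning: fix a fan apex on each orbit of Delaunay $2$-faces, then cone each $3$-cell from a chosen vertex compatibly. As you yourself flag, synchronising the cone vertex of a $3$-cell with the fan apex on each of its faces that contains that vertex is a real combinatorial constraint, and there is no a priori reason it can be satisfied simultaneously over all orbits. This is precisely the obstruction that forced the insertion of \emph{layered triangulations} in Algorithm 5.2 of~\cite{HRS}, which the paper invokes here; those layers contain flat (volume-zero) tetrahedra, so the resulting triangulation is not geometric, and the paper's strong-essentiality argument therefore needs an additional step (Algorithm 5.1 of~\cite{HRS}) to rule out coinciding geodesics inside the layered regions. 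Your proposal, if the consistent coning could be realised, would yield a cleaner, fully geometric triangulation for which strong essentiality follows directly from uniqueness of geodesics between distinct orbit points; but you have not shown the coning can be realised, so the argument is incomplete at the crucial step.

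Your fallback---perturb $x_0$ to a generic point so the Delaunay complex is already simplicial, then transfer by a limiting argument---also does not establish the theorem as stated. The theorem asserts the conclusion for the cut locus of \emph{any} given $x_0$; after perturbation you are subdividing a different Delaunay complex (and in the limit the combinatorics can degenerate), so this does not produce a subdivision of the original cell decomposition. To close the gap you should either (i) prove that a compatible equivariant coning exists (not obviously true), or (ii) follow the paper and allow layered bridges between incompatible face triangulations, then argue as in the cusped case that no two edges in a layered region span the same pair of vertices, using the flatness structure of those tetrahedra.
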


\begin{proof}
We first follow the well-known construction of Dirichlet domains and Delaunay cell decompositions in the case of constant zero or negative curvature. The modification required to deal with the case of constant positive curvature is given at the end. For basic information on hyperbolic manifolds, see \cite{rat}. 

Consider the picture in the universal covering $\widetilde{M}$ of $M$. Since $\widetilde{M}$ is isometric to Euclidean or hyperbolic space, there is a unique geodesic segment between any two points of $\widetilde{M}$.
The Dirichlet construction defines a region $\mathcal D$ in $\widetilde{M}$ given by choosing a lift $x_1$ of the base point $x_0$ in $M$ and letting ${\mathcal D} = \{x \in \widetilde{M}: d(x,x_1) \le d(x,x_j) \text{ for all } j >1 \}$, where the lifts of $x_0$ are denoted $x_1,x_2, \dots$ and $d$ is the Riemannian distance between points in $\widetilde{M}$.  Then the Dirichlet domain $\mathcal D$ is a convex geometric polyhedron with totally geodesic faces, and the boundary of $\mathcal D$ projects to the cut locus of  $x_0$ in $M$, which gives a spine for $M$. 

Now $\widetilde{M}$ is tiled by copies of the fundamental domain $\mathcal D$ under the action of the covering transformation group. 
The geometric dual to this tiling is the Delaunay cell decomposition $\mathcal C$ of $\widetilde M$ into convex polyhedra with vertices at the points $x_1,x_2, \dots$. In particular, each edge of $\mathcal C$ is a geodesic in $\widetilde M$ joining two distinct lifts of $x_0$. 
Then $\mathcal C$ projects to a decomposition of $M$ into convex geometric polyhedra with a single vertex $x_0$.
We can now repeat the argument from the proof of Theorem \ref{cusped hyperbolic implies strongly essential} to obtain a subdivision into topological  tetrahedra giving a strongly essential triangulation.

In the case of constant positive curvature $+1$, if the diameter is strictly less than $\pi$, then the Dirichlet domain lies in a ball of radius at most $\frac{\pi}{2}$. In this case, the Dirichlet domain is strictly convex and so the same argument works as in the constant zero or negative curvature cases. 
\end{proof}

\begin{Rem}
It seems likely that both Theorem \ref{cusped hyperbolic implies strongly essential} and Theorem  \ref{Dirichlet} should also apply in higher dimensions. If there are no coincidences of geometry then the Epstein-Penner decomposition consists only of simplices of the appropriate dimension, and then, by the arguments above, this triangulation is strongly essential. In the non-generic case we would need to subdivide convex ideal hyperbolic polytopes into ideal simplices and bridge between incompatible induced triangulations of the polytopes' facets. The former is easy enough, either using coning constructions or regular triangulations (as used in \cite{GHRS}, see also \cite{GKZ}), but it isn't immediately obvious how to do the latter by layering simplices. 

Similar observations apply to the cut locus and its dual cell decomposition. 

\end{Rem}

\begin{Exa}[Lens spaces]
It is shown by Anisov (\cite{Anisov-2006}, Theorems 2.4 and 3.1) and Gu\'eritaud (\cite{guer-2009}, proof of Theorem 3) that we get the minimal layered triangulation of a lens space from the above construction, i.e.\thinspace the layered triangulations with the minimal number of tetrahedra (see \cite{JR-layered}).
In this case, the cut locus of a generic point is the spine dual to the minimal layered triangulation of $L(p,q),$ unless $q = \pm 1 \mod p,$ in which case one needs to perturb the cut locus slightly. 
\end{Exa}

\begin{Rem}
Ehrlich-Im Hof \cite[p650]{EhrlichImhof} mention examples of Riemannian metrics with no conjugate points on manifolds of dimension $\ge 3$  for which the dual triangulation to the cut locus is not strongly essential. 
\end{Rem}


\section{Algorithms}

The previous sections have given various constructions to obtain essential or strongly essential triangulations. We now show that there are algorithms to decide whether a given triangulation of a closed 3--manifold is essential or strongly essential, based on the fact that the word problem is uniformly solvable in the class of fundamental groups of compact, connected 3--manifolds. Similarly, we show that there is an algorithm to test whether an ideal triangulation is essential based on the fact that the subgroup membership problem is solvable in the same generality. 
However, to test whether an ideal triangulation is strongly essential, we require the double coset membership problem to be solvable for pairs of peripheral subgroups---here, we restrict the discussion to the most interesting classes of manifolds.

We address the closed case in \S\ref{sec:algo:closed} and the ideal case in \S\ref{sec:algo:ideal}. In each, we first determine group theoretic decision problems that are equivalent to deciding whether a triangulation is essential or strongly essential, and then determine conditions on the manifold that ensure that the problems can be solved. The reader is referred to the excellent survey of decision problems for 3--manifold groups by Aschenbrenner, Friedl and Wilton~\cite{AFW} for definitions and further references for the results we use.


\subsection{The closed case}
\label{sec:algo:closed}

First suppose that $\widehat{M}$ is a closed, connected 3--manifold with 1--vertex triangulation, and write $M = \widehat{M}.$ Then a finite presentation for $\pi_1(M, v),$ where $v$ is the vertex, is obtained from the triangulation by arbitrarily orienting the edges. Every generator of the presentation corresponds to an (oriented) edge loop $\gamma$, and every relator corresponds to a face. The triangulation is essential if and only if for each edge loop $\gamma,$ the corresponding element in the fundamental group is non-trivial, written $[\gamma] \neq 1 \in \pi_1(M, v).$ It therefore suffices to solve the word problem for each generator.

The triangulation is strongly essential if, in addition, for any two (oriented) edge loops $\gamma$ and $\delta,$ we have $[\gamma] \neq [\delta]$ and $[\gamma] \neq [\delta]^{-1}.$ This, again, reduces to solving the word problem.

We conclude that there is an algorithm to test whether a triangulation of $M$ is essential or strongly essential if there is a solution to the word problem in $\pi_1(M).$

The Geometrisation Theorem of Thurston and Perelman together with work of Hempel \cite{JH} implies that 3--manifold groups are residually finite. The word problem in the larger class of finitely presented, residually finite groups has been known to be uniformly solvable since the 1960s due to work of Dyson \cite{Dy64} and Mostowski \cite{Mos66}. The existence of the desired algorithms therefore follows in full generality from the following:

\begin{Thm}
The Word Problem is uniformly solvable in the class of fundamental groups of compact, connected 3--manifolds.
\end{Thm}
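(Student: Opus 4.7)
The plan is to combine three ingredients: (i) from any compact, connected 3--manifold $N$ one can uniformly extract a finite presentation of $\pi_1(N)$; (ii) by a theorem of Hempel~\cite{JH}, which relies on the Geometrisation Theorem, $\pi_1(N)$ is residually finite; and (iii) the word problem is uniformly solvable in the class of finitely presented residually finite groups, by a classical argument of Dyson and Mostowski.

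First, I would fix a concrete description of the input. A compact, connected 3--manifold $N$ is specified by a finite triangulation (if $N$ has boundary, a finite triangulation of the double, or equivalently a compact core), and from this a finite presentation of $\pi_1(N,v)$ is obtained by the standard edge-path group construction: choose a maximal tree in the $1$--skeleton, take the remaining oriented edges as generators, and read a relator off each $2$--simplex. This procedure is algorithmic in the triangulation, so we have a uniform reduction from the input 3--manifold to a finite presentation $\langle S \mid R \rangle$ of $\pi_1(N)$.

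Second, I would invoke Hempel's theorem to conclude that every such $\pi_1(N)$ is residually finite. It then suffices to prove the uniform version of the Dyson--Mostowski result: given a finite presentation $\langle S \mid R \rangle$ of a residually finite group $G$ and a word $w$ in $S,$ one can decide whether $w = 1$ in $G$. The algorithm runs two semi-decision processes in parallel. Process (a) enumerates all products of conjugates of elements of $R^{\pm 1}$ in the free group $F(S)$ and halts when such a product reduces to $w$; this halts precisely when $w$ represents the identity. Process (b) enumerates all finite groups $H$ (up to isomorphism) together with all maps $S \to H$ that respect $R,$ computes the image of $w$ under the induced homomorphism $G \to H,$ and halts if some such image is non-trivial; by residual finiteness this halts precisely when $w$ does \emph{not} represent the identity. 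Exactly one of the two processes terminates, so the word problem is decided. Uniformity is automatic, as the procedure takes $(\langle S \mid R \rangle, w)$ as raw input without using any hidden information about $G$.

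The main obstacle is really external to the argument itself: the residual finiteness of 3--manifold groups, which is a deep consequence of Geometrisation. Once this is in hand, the argument is essentially formal. A secondary subtlety is the passage from a compact 3--manifold (possibly with boundary) to a finite presentation; this can be handled uniformly using any of the standard combinatorial models, so the desired uniform algorithm exists in the full generality of the theorem.
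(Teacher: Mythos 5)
Your proposal is correct and follows essentially the same route as the paper: residual finiteness of 3--manifold groups via Hempel and Geometrisation, combined with the Dyson--Mostowski uniform solution of the word problem for finitely presented residually finite groups. The paper simply cites these results without spelling out the two parallel semi-decision procedures, which you have correctly reproduced.
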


\begin{Coro}
Given a 1--vertex triangulation of a closed, connected 3--manifold, there is an algorithm to test whether it is essential, and if so, whether it is strongly essential.
\end{Coro}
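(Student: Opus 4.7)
The plan is to reduce both decision problems to finitely many instances of the Word Problem in $\pi_1(M),$ and then invoke the preceding theorem.

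First, from the given 1--vertex triangulation I would extract the standard presentation of $\pi_1(M,v)$ dual to the 2--skeleton: fix an arbitrary orientation on each edge, take one generator per edge loop, and one relator per 2--face (the product, read around the boundary with signs according to the orientations, of the three edges bounding the face). This presentation is computed in finite time from the combinatorial data $(\widetilde{\Delta},\Phi,p).$ Each edge loop $\gamma_e$ then corresponds to a specific generator, hence to an explicit word in the generators.

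Next, for essentiality, I would enumerate the finitely many edges $e_1,\dots,e_n$ of the triangulation and, for each $i,$ apply the uniform solution to the Word Problem to decide whether the generator $[\gamma_{e_i}]$ is trivial in $\pi_1(M,v).$ The triangulation is essential if and only if every such test returns ``nontrivial''. For strong essentiality, assuming we have already confirmed essentiality, I would further enumerate the finitely many ordered pairs $(e_i,e_j)$ with $i<j$ and test, again using the Word Problem, whether either of the words $[\gamma_{e_i}][\gamma_{e_j}]^{-1}$ or $[\gamma_{e_i}][\gamma_{e_j}]$ is trivial in $\pi_1(M,v);$ the triangulation is strongly essential precisely when every such test returns ``nontrivial'' (the second word corresponds to the two edge loops being parallel with opposite orientations, as allowed by the definition of parallel in \S\ref{sec:Essential and strongly essential--closed}).

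The only potentially nontrivial point is the justification that these finitely many Word Problem instances really do decide the homotopy questions; but this is immediate from the definitions in \S\ref{sec:Essential and strongly essential--closed}, since an edge loop is null-homotopic if and only if it represents the trivial element of $\pi_1(M,v),$ and two based edge loops are parallel if and only if they represent the same element up to inversion. The number of queries is bounded by $n + 2\binom{n}{2} = n^2,$ and each individual query terminates by the uniform solvability of the Word Problem guaranteed by the previous theorem. Combining these two stages yields the algorithm, first declaring essentiality, and then, conditionally, strong essentiality.
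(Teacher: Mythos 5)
Your argument is correct and follows essentially the same route as the paper: extract the presentation of $\pi_1(M,v)$ from the triangulation (generators from edge loops, relators from faces), reduce essentiality to triviality tests for each generator, reduce strong essentiality to equality and inverse-equality tests for pairs of generators, and invoke the uniform solvability of the Word Problem. The only addition on your part is the explicit count of $n^2$ queries, which the paper leaves implicit.
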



\subsection{The ideal case}
\label{sec:algo:ideal}

The main ingredient to test whether an ideal triangulation is essential is the following:

\begin{Thm}[Friedl and Wilton \cite{FW}]
The Membership Problem is uniformly solvable in the class of fundamental groups of compact, connected 3--manifolds.
\end{Thm}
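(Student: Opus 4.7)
The plan is to reduce the membership problem on a general compact 3--manifold group to pieces dictated by Geometrisation. Given a compact connected 3--manifold $N,$ a finite generating set for a finitely generated subgroup $H\le\pi_1(N),$ and a word $g,$ my first step would be to compute the prime and JSJ decomposition of $N$ algorithmically from a triangulation. This presents $\pi_1(N)$ as the fundamental group of a graph of groups whose vertex groups are fundamental groups of Seifert-fibered pieces or of cusped finite-volume hyperbolic 3--manifolds, and whose edge groups are peripheral, in particular virtually $\Z^2.$

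For each geometric vertex group I would solve the membership problem by effectivising subgroup separability. For Seifert-fibered pieces this goes back to Scott's LERF theorem, which can be made effective because one has explicit finite presentations and an enumeration of finite-index subgroups. For the hyperbolic pieces, subgroup separability of finitely generated subgroups follows from the Agol--Wise theorem that such groups are virtually special. The standard reduction from separability to an algorithm then runs two searches in parallel: one enumerates words in the generators of $H$ hoping to hit $g,$ and the other enumerates finite quotients of the vertex group hoping to find one in which the image of $H$ misses the image of $g.$ Residual finiteness of 3--manifold groups guarantees that exactly one search terminates, and the effective computability of finite quotients from a presentation makes this into an honest algorithm.

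The main obstacle, and the genuinely new work in \cite{FW}, is passing from vertex groups to the full graph-of-groups structure, because graph-manifold groups are not in general LERF, so one cannot simply invoke separability for $\pi_1(N)$ directly. Their approach is to work in the Bass--Serre tree of the JSJ splitting and to reduce the membership problem for $\pi_1(N)$ to a combination of membership tests in each vertex group together with double-coset decidability for the peripheral edge groups inside their neighbouring vertex groups. The virtually abelian structure of the edge groups and the algorithmic accessibility of their embeddings in vertex groups (readable from the triangulation via the JSJ tori) are what make this combination uniform. Assembling this, the algorithm takes a triangulation of $N$ as input, produces the JSJ graph of groups, then applies the per-vertex membership machinery together with the tree-theoretic combination step to decide whether $g\in H.$
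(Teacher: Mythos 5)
The theorem you are addressing is not proved in this paper: it is stated as a citation to Friedl and Wilton \cite{FW} and used as a black box (together with the solvability of the word problem and of the double coset membership problem) in the algorithmic corollaries of Section 7. There is therefore no argument in the paper for your sketch to be measured against; the intended ``proof'' is simply the reference.

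As a reconstruction of the Friedl--Wilton argument, your outline identifies the right landmarks: prime and JSJ decomposition, subgroup separability of the Seifert-fibered vertex groups (Scott) and the hyperbolic vertex groups (via virtual specialness), the standard dovetailing reduction from separability plus residual finiteness to a decision procedure, and the observation that the real obstruction is recombining across the JSJ graph because graph manifold groups are not LERF. Two points deserve more care than your sketch gives them, and they are where \cite{FW} has to do genuine work. First, to keep the recursion within the class one uses that a finitely generated subgroup of a $3$--manifold group is itself a (compact) $3$--manifold group, via Scott's compact core theorem; without this the per-piece membership machinery has no handle on $H$. Second, the combination step is more than ``membership in vertex groups plus double-coset tests in edge groups'': one must algorithmically control how the given finitely generated $H$ acts on the Bass--Serre tree of the JSJ splitting---effectively computing an induced graph-of-groups decomposition of $H$---and then verify that the various local membership and double-coset decisions cohere across that finite graph. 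Stating this as ``the tree-theoretic combination step'' without indicating how that finite data is produced algorithmically leaves the hardest part of the proof unaddressed. With those caveats, your sketch is a reasonable high-level account, but it should be presented as a summary of \cite{FW} rather than as a self-contained argument.
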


\begin{Coro}\label{cor:algo ideal is essential}
Given an ideal triangulation of the interior of a compact, connected 3--manifold, there is an algorithm to test whether it is essential.
\end{Coro}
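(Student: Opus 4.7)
The plan is to reduce the essentiality test for each ideal edge to a finite number of instances of the Membership Problem in $\pi_1(M^c)$, and then invoke the theorem of Friedl and Wilton quoted above.

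First I would algorithmically extract from the triangulation $\tri$: (i) a finite presentation of $\pi_1(M^c, v_0)$ based at a chosen vertex $v_0$ in the dual spine; (ii) for each boundary component $T$ of $M^c$, a finite generating set for the peripheral subgroup $P_T = \iota_*\pi_1(T, p_T) \leq \pi_1(M^c, v_0)$, together with a fixed path $\eta_T$ from $v_0$ to a basepoint $p_T \in T$ used to express the peripheral generators as words in the ambient presentation. All of this is standard: the peripheral groups are read off from the vertex links in $\tri$, each of which is a triangulated closed surface whose fundamental group is finitely presented and maps naturally into $\pi_1(M^c)$.

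Next, for each ideal edge $e$ I would run the following test. Let $q_1, q_2 \in \partial M^c$ be the endpoints of the arc $\gamma_e \cap M^c$. If $q_1$ and $q_2$ lie on different components of $\partial M^c$, then no path in $\partial M^c$ can join them, so $e$ is automatically essential and no further work is needed. Otherwise both endpoints lie on a single component $T$; I would then choose an auxiliary path $\alpha_e$ in $T$ from $q_1$ to $q_2$, for instance along the triangulation of the vertex link. By the lemma of \S\ref{sec:Essential and strongly essential}, the edge $e$ is essential if and only if the loop $(\gamma_e \cap M^c)\cdot \alpha_e^{-1}$ based at $q_1$ is not path-homotopic in $M^c$ to a loop in $T$; conjugating by a fixed path from $v_0$ to $q_1$ built from $\eta_T$, this translates into the purely group-theoretic statement that the associated word $w_e \in \pi_1(M^c, v_0)$ does not lie in $P_T$. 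The word $w_e$ can be read off explicitly from the combinatorics of $\tri$.

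Finally, I would feed each pair $(w_e, P_{T_e})$ into the Friedl--Wilton Membership Problem algorithm and declare $\tri$ essential if and only if every such query returns ``not a member''. The main substantive ingredient is of course the Friedl--Wilton theorem; the remainder is a routine but careful translation between the combinatorial data of the triangulation and the algebraic data of a presentation with marked peripheral subgroups. Since $\tri$ has only finitely many edges, the procedure terminates.
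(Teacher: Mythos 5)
Your proof is correct and follows essentially the same route as the paper: pass to the compact core $M^c$, close up each arc $\gamma_e\cap M^c$ to a loop via an auxiliary arc in $\partial M^c$, and reduce essentiality to membership of that loop's class in the peripheral subgroup, then invoke Friedl--Wilton. The only differences are cosmetic (the paper works with $\pi_1(M^c,a)$ directly rather than conjugating back to a fixed basepoint $v_0$, and leaves implicit the easy observation, which you spell out, that an edge joining distinct ideal vertices is automatically essential).
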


\begin{proof}
We are given $M = \widehat{M}\setminus \widehat{M}^{(0)},$ where $\widehat{M}$ is a triangulated 3--dimensional closed pseudo-manifold (with an arbitrary number of vertices). Taking the second barycentric subdivision gives the compact core $M^c$ as described in Section~\ref{sec:Preliminaries}, and $M^c$ has a triangulation arising from the second barycentric subdivision of $\widehat{M}^{(0)}.$

Given any edge path $\gamma$ of $\widehat{M}$ with the same initial and terminal vertex, take the intersection of $\gamma$ with $M^c$ and connect the endpoints $a$ and $b$ of $\gamma \cap \partial M^c$ by an arc contained in the 1--skeleton of $\partial M^c$ to give a loop $\gamma^c$ in $M^c$ based at $a$ (this can be done in an algorithmic fashion). The triangulation of $M^c$ yields a finite presentation of $\pi_1(M^c, a)$ as well as a finite generating set for $H = \im(\pi_1(B, a) \to \pi_1(M^c, a))$, where $B$ is the component of $\partial M^c$ containing $a$. One now needs to test whether $[\gamma^c] \in H.$ The ideal triangulation of $M$ is essential if and only if the answer to this membership test is negative for each edge $\gamma$ of $\widehat{M}$ with the same initial and terminal vertex.
\end{proof}

Our test whether a triangulation is strongly essential requires a solution to the so-called \emph{Double Coset Membership Problem}. A class of groups for which this can be solved is provided in the discussion after Corollary 4.16 in \cite{AFW}. There it is argued that given the fundamental group of a compact Seifert fibred space or a topologically finite hyperbolic 3--manifold, the product of any two finitely generated subgroups is separable. In the case of Seifert fibred manifolds, this is due to Niblo~\cite{Ni92} building on work of Scott~\cite{Sc78}. In particular, the following result holds by applying the argument of the proof of Lemma 4.3 in \cite{AFW}.

\begin{Prop}\label{pro:Double Coset Membership Problem}
The Double Coset Membership Problem is uniformly solvable for pairs of finitely generated subgroups in the class of fundamental groups of compact Seifert fibred spaces or topologically finite hyperbolic 3--manifolds.
\end{Prop}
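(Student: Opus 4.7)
The plan is to apply the standard Mal'cev-style argument: separability of a subset $S$ of a finitely generated, effectively presented, residually finite group with solvable word problem translates directly into an algorithm to decide membership in $S$. The result then follows by combining the cited separability of products $H_1 H_2$ with the effective residual finiteness of 3--manifold groups.

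First, I would fix the algorithmic inputs. The manifold $N$ is given (for instance, by a triangulation), which supplies a finite presentation of $G = \pi_1(N)$ on the spot, and the finitely generated subgroups $H_1, H_2 \leq G$ together with the query element $g \in G$ are given as words in the generators. By the Word Problem theorem cited in \S\ref{sec:algo:closed}, equality of words in $G$ is decidable. Moreover, $G$ is residually finite by Hempel's theorem (combined with geometrisation), and one can effectively enumerate finite quotients $\varphi \colon G \twoheadrightarrow Q$, for example by enumerating all homomorphisms from $G$ to symmetric groups $S_n$ and verifying the relations using the solvable word problem.

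Second, I would invoke the separability fact. For $G$ the fundamental group of a compact Seifert fibred space, Niblo's theorem (extending Scott's LERF result) asserts that the product of any two finitely generated subgroups of $G$ is a closed subset of $G$ in the profinite topology; in the topologically finite hyperbolic case, the analogous statement is the content of the discussion following Corollary~4.16 in \cite{AFW}. Thus $H_1 H_2 \subseteq G$ is separable: for every $g \notin H_1 H_2$ there is a finite quotient $\varphi \colon G \twoheadrightarrow Q$ with $\varphi(g) \notin \varphi(H_1) \varphi(H_2)$.

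Third, I would run two semi-decision procedures in parallel, exactly as in the proof of Lemma~4.3 of \cite{AFW}. Procedure~(a) enumerates products $h_1 h_2$ with $h_i$ ranging over all words in the given generators of $H_i$, and for each candidate tests whether it equals $g$ in $G$ using the Word Problem; it halts precisely when $g \in H_1 H_2$. Procedure~(b) enumerates finite quotients $\varphi \colon G \twoheadrightarrow Q$, computes the finite set $\varphi(H_1) \varphi(H_2) \subseteq Q$, and checks whether $\varphi(g)$ lies in it; by separability, this halts precisely when $g \notin H_1 H_2$. Exactly one procedure halts, and its outcome decides the membership question, giving a uniform algorithm on the stated classes.

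The main obstacle is bookkeeping rather than ideas: one has to verify that each input (the presentation of $G$, generators of $\varphi(H_1)\varphi(H_2)$ inside $Q$, an enumeration of finite quotients) is genuinely uniformly computable from the combinatorial description of the manifold, and that the cited separability of $H_1 H_2$ really does apply uniformly across each of the two classes. Once these are in place, the dovetailing argument above completes the proof.
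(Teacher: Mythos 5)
Your proposal is correct and follows essentially the same route as the paper: the paper simply cites the separability of products of finitely generated subgroups (Niblo/Scott for Seifert fibred spaces, the discussion after Corollary~4.16 of \cite{AFW} for the hyperbolic case) and then invokes ``the argument of the proof of Lemma~4.3 in \cite{AFW}'', which is precisely the Mal'cev-style dovetailing argument you spell out. The only small inaccuracy is your remark that verifying relators for a candidate homomorphism $G \to S_n$ uses the word problem in $G$ --- this check is a finite computation in $S_n$ and needs no word-problem solver --- but this does not affect the argument.
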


\begin{Coro}\label{cor:algo ideal of geometric is strongly essential}
Given an ideal triangulation of a topologically finite hyperbolic 3--manifold or the interior of a compact Seifert fibred space with boundary, there is an algorithm to test whether it is strongly essential.
\end{Coro}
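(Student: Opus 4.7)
The plan is to reduce testing strong essentiality to a finite number of instances of the \emph{Double Coset Membership Problem} for peripheral subgroups of $\pi_1(M^c)$, and then to invoke Proposition~\ref{pro:Double Coset Membership Problem}. As a preliminary step, I would first run the algorithm of Corollary~\ref{cor:algo ideal is essential} to verify essentiality; this is itself a membership test and is in particular a special case of double coset membership. If it fails, return ``not strongly essential'' and stop.

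Next, pick a base point $x \in M^c$, and for each component $B$ of $\partial M^c$ pick a base point $p_B \in B$ together with a path $\lambda_B$ from $x$ to $p_B$ in $M^c$. The triangulation of $M^c$ algorithmically yields a finite presentation of $G := \pi_1(M^c, x)$ and, for each $B$, a finite generating set for the peripheral subgroup $H_B := \lambda_B \, \pi_1(B, p_B) \, \lambda_B^{-1} \le G$. For each oriented ideal edge $e$, whose intersection with $M^c$ runs from a point on the boundary component $B_1(e)$ to a point on the boundary component $B_2(e)$, pick auxiliary paths in the $1$-skeleton of $\partial M^c$ from $p_{B_i(e)}$ to the corresponding endpoint of $\gamma_e \cap M^c$, concatenate with $\lambda_{B_i(e)}^{\pm 1}$ to form a loop $\tilde\gamma_e$ based at $x$, and set $w_e := [\tilde\gamma_e] \in G$. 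Different choices of the auxiliary paths alter $w_e$ only by left multiplication by an element of $H_{B_1(e)}$ and right multiplication by an element of $H_{B_2(e)}$, so the double coset $H_{B_1(e)} \, w_e \, H_{B_2(e)}$ is a well-defined invariant of the oriented edge $e$.

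By Lemma~\ref{lem:parallel ideal edges in core}, two ideal edges $e, f$ are parallel if and only if, after some choice of orientations making $B_1(e) = B_1(f)$ and $B_2(e) = B_2(f)$, one has $w_e \in H_{B_1(e)} \, w_f \, H_{B_2(e)}$. The possible matchings of boundary components are finite and combinatorially decidable; the remaining condition is precisely a double coset membership query in $G$. Since each boundary component of $M^c$ is a compact surface, the peripheral subgroup $H_B$ is finitely generated, so Proposition~\ref{pro:Double Coset Membership Problem} supplies a uniform algorithm deciding each such query under the hypothesis that $M$ is topologically finite hyperbolic or the interior of a compact Seifert fibred space with boundary. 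Performing this test on every unordered pair of edges decides strong essentiality.

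The main obstacle will be the careful bookkeeping required by the orientation and same-component subtleties: each unoriented edge gives two elements $w_e, w_e^{-1}$, and when $B_1(e) = B_2(e)$ these may represent genuinely distinct double cosets in $H_B \backslash G / H_B$, so each pair of edges may require up to two double coset tests (corresponding to the two choices of orientation for $f$ relative to $e$). Once these subcases are enumerated, however, the whole procedure is a finite conjunction of instances of the double coset membership algorithm provided by Proposition~\ref{pro:Double Coset Membership Problem}, proving the corollary.
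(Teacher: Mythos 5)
Your proposal is correct and takes essentially the same route as the paper: run the essentiality test first, then reduce parallelism of two ideal edges (via Lemma~\ref{lem:parallel ideal edges in core}) to membership of an algorithmically constructed loop class in a double coset of peripheral subgroups, which Proposition~\ref{pro:Double Coset Membership Problem} decides. The only cosmetic difference is your choice of a single base point $x\in M^c$ with base point paths $\lambda_B$, versus the paper basing the loop at the first boundary intersection point $\gamma(a_\gamma)$; the observation that the double coset $H_{B_1(e)}w_eH_{B_2(e)}$ is independent of the auxiliary path choices is the correct justification for why this is well defined, and the enumeration of orientation subcases matches the paper's handling of $\gamma,\delta$ versus $\gamma,-\delta$.
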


\begin{proof}
We follow verbatim the proof of Corollary~\ref{cor:algo ideal is essential}, where $M$ is now known to be a topologically finite hyperbolic 3--manifold or the interior of a compact Seifert fibred space with boundary. If the triangulation is not essential, it is also not strongly essential. 

Hence suppose the algorithm shows the triangulation to be essential. To test whether the triangulation is strongly essential, we need to perform the following additional test. Suppose $\gamma$ and $\delta$ are edge paths in $\widehat{M},$ with $\gamma$ running from vertex $i(\gamma) = v_1$ to vertex $t(\gamma)= v_2,$ and $\delta$ running from $i(\delta)=v_2$ to $t(\delta)=v_1.$ Again consider the intersections of $\gamma$ and $\delta$ with $M^c$ and denote the first point of intersection of $\gamma$ with $\partial M^c$ by $\gamma(a_\gamma)$ and the second by $\gamma(b_\gamma).$  Similarly for $\delta.$ 

Then choose a path $\rho_2$ on $\partial M^c$ from $\gamma(b_\gamma)$ to $\delta(a_\delta)$ and a path $\rho_1$ from $\delta(b_\delta)$ to $\gamma(a_\gamma).$ Then $\gamma' = (\gamma\mid_{[a_\gamma, b_\gamma]}) \star \rho_2 \star (\delta\mid_{[a_\delta, b_\delta]}) \star \rho_1$ is a loop in the 1--skeleton of $M^c$ based at $\gamma(a_\gamma).$ Denote the boundary component of $M^c$ containing $\gamma(a_\gamma)$ by $B_1$ and the component containing $\gamma(b_\gamma)$ by $B_2$ (they are possibly the same). Then let $H_1 = \im(\pi_1(B_1, \gamma(a_\gamma)) \to \pi_1(M^c, \gamma(a_\gamma)))$ and $H_2 = \im(\pi_1(B_2, \gamma(b_\gamma)) \to \pi_1(M^c, \gamma(a_\gamma))),$ where for the second inclusion map the path $\gamma\mid_{[a_\gamma, b_\gamma]}$ joining $\gamma(a_\gamma)$ and $\gamma(b_\gamma)$ is chosen.

Lemma~\ref{lem:parallel ideal edges in core} implies that $\gamma$ and $-\delta$ are related by an admissible path homotopy if and only if $[\gamma'] \in H_2H_1$. So we have reduced the problem of deciding whether there is an admissible path homotopy between $\gamma$ and $-\delta$ to testing whether a certain, algorithmically constructed loop represents an element in the double coset $H_2H_1.$ Since the groups $H_2$ and $H_1$ are finitely generated subgroups of $\pi_1(M),$ our hypotheses on $M$ allow us to solve this problem algorithmically (Proposition~\ref{pro:Double Coset Membership Problem}).

For any two edge paths $\gamma$, $\delta$ in the triangulation, we now need to apply the above algorithm to  $\gamma$ and $\delta$ if $i(\gamma) = t(\delta)$ and $i(\delta) = t(\gamma);$ and to $\gamma$ and $-\delta$ if $i(\gamma) = t(-\delta)$ and $i(-\delta) = t(\gamma).$ Then the triangulation is strongly essential if and only if the none of the constructed loops represents an element in the associated double coset.
\end{proof}

We will now bootstrap the above results to obtain a more general result.

\begin{Coro}
Given an ideal triangulation of the interior of a compact, irreducible, orientable, connected 3--manifold with incompressible boundary consisting of a disjoint union of tori, there is an algorithm to test whether it is strongly essential.
\end{Coro}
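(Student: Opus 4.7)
The plan is to follow closely the structure of Corollary~\ref{cor:algo ideal of geometric is strongly essential}, but to replace the appeal to Proposition~\ref{pro:Double Coset Membership Problem} with a more general argument that handles the wider class of manifolds. First, I would run the algorithm from Corollary~\ref{cor:algo ideal is essential} to decide whether the triangulation is essential; if the answer is negative, the triangulation is not strongly essential and we are done. Assuming it is essential, for each pair of ideal edges $\gamma, \delta$ with matching endpoints in some orientation, I would construct the loop $\gamma'$ in the $1$--skeleton of the compact core $M^c$ and identify the two peripheral subgroups $H_1 = \im(\pi_1(B_1) \to \pi_1(M^c))$ and $H_2 = \im(\pi_1(B_2) \to \pi_1(M^c))$ exactly as in the proof of Corollary~\ref{cor:algo ideal of geometric is strongly essential}. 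By Lemma~\ref{lem:parallel ideal edges in core}, testing strong essentiality then reduces to testing, for each such constructed loop, whether $[\gamma'] \notin H_2 H_1$ in $\pi_1(M^c) = \pi_1(M)$.

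The main obstacle is therefore to solve the Double Coset Membership Problem for pairs of peripheral subgroups in $\pi_1(M)$, where $M$ is now only assumed to be compact, irreducible, orientable with incompressible torus boundary (and not necessarily Seifert fibred or hyperbolic). I would argue this via a combination of separability and residual finiteness. By Hempel~\cite{JH} together with Geometrisation, $G = \pi_1(M)$ is residually finite, and in fact peripheral subgroups and products of two peripheral subgroups are separable in $G$; this is recorded in the survey of Aschenbrenner, Friedl and Wilton~\cite{AFW} and ultimately rests on the JSJ decomposition of $M$ into Seifert fibred and hyperbolic pieces together with the strong separability properties established for each type of piece by Niblo, Scott and others, extended to the graph of groups via standard techniques.

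Given separability of $H_2 H_1$ in $G$, the Double Coset Membership Problem becomes decidable by the standard two-sided enumeration: one process enumerates the elements of $H_2 H_1$ using the Word Problem for $G$ (uniformly solvable in this class) and the finite generating sets of $H_1$ and $H_2$; a second process enumerates finite quotients of $G$ and, in each, tests whether the image of $[\gamma']$ lies in the image of $H_2 H_1$. Because $H_2 H_1$ is separable and $G$ is finitely presented, exactly one of these two processes terminates for each input, producing the correct decision. Combining this subroutine with the reduction in the first paragraph yields the required algorithm.

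The hard part of the argument is squarely the separability step: extracting from the literature the precise statement that products of peripheral subgroups are separable in the fundamental group of a compact, irreducible, orientable $3$--manifold with incompressible torus boundary, and verifying that the constants and finite generating data needed to run the enumeration can be computed algorithmically from the given triangulation. Once this is in hand, the remainder of the proof is a direct adaptation of the Seifert fibred and hyperbolic case already handled in Corollary~\ref{cor:algo ideal of geometric is strongly essential}.
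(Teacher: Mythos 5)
The proposal correctly identifies the reduction to a double-coset membership test (via Lemma~\ref{lem:parallel ideal edges in core}), and the first paragraph mirrors the paper's set-up exactly. However, the crucial step---asserting that products of two peripheral subgroups are separable in $\pi_1(M)$ for \emph{every} compact, irreducible, orientable $3$--manifold with incompressible torus boundary---is precisely the claim that the paper does \emph{not} make and deliberately routes around. Proposition~\ref{pro:Double Coset Membership Problem} is stated only for Seifert fibred spaces and (topologically finite) hyperbolic manifolds. Mixed (graph) manifolds are famously badly behaved with respect to separability: subgroup separability already fails for some graph manifold groups (Burns--Karrass--Solitar, Rubinstein--Wang, Niblo--Wise), and while peripheral ($\mathbb{Z}^2$) subgroups are known to be separable, separability of \emph{products} of peripheral subgroups is a genuinely stronger property that does not follow by ``standard techniques'' from the separability properties of the JSJ pieces. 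Combination theorems for separability across a graph of groups are notoriously delicate, and the literature you would need to cite here does not obviously contain the statement you want. So your proposal, as written, reduces the theorem to an unproved (and possibly false) separability assertion, and the acknowledged ``hard part'' is the whole content of the result.

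The paper instead argues piece-by-piece through the JSJ decomposition: it computes the JSJ tori algorithmically (Jaco--Tollefson), checks algebraic intersection numbers against each JSJ torus, decomposes each edge path into components inside JSJ pieces, homotopes inessential components into the JSJ boundary, and then checks parallelism of corresponding components inside each JSJ piece --- where, because each piece is Seifert fibred or hyperbolic, Proposition~\ref{pro:Double Coset Membership Problem} applies. The bookkeeping is heavier, but it never requires a global separability statement for $\pi_1(M)$. If you want to salvage your approach you would need either a genuine proof (or a precise reference with matching hypotheses) of double-coset separability of peripheral subgroups in this class, or you should adopt the paper's JSJ-piecewise strategy.
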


\begin{proof}
We first determine whether the triangulation is essential using Corollary~\ref{cor:algo ideal is essential}.  Suppose it is. The next step determines a system of tori giving the JSJ decomposition of the 3--manifold $M.$ This is done using work of Jaco and Tollefson~\cite{JT} as follows. The compact core $M^c$ is triangulated by construction. Algorithm 8.2 of \cite{JT} determines the collection of JSJ tori (noting that annuli do not arise because the ends of $M$ are all tori) as normal surfaces with respect to the finer triangulation. In particular, any edge in the ideal triangulation of $M$ meets these JSJ tori transversely. The JSJ tori divide the tetrahedra in $M^c$ into polytopes, and this finer cell decomposition of $M^c$ gives natural inclusion maps of the fundamental groups of the JSJ tori and the JSJ pieces into $\pi_1(M)$ (using appropriately chosen basepoints in the 0--skeleton and paths in the 1--skeleton).

Suppose, as above, that $\gamma$ and $\delta$ are edge paths with respect to the initial triangulation of  $\widehat{M},$ with $\gamma$ running from vertex $v_1$ to vertex $v_2,$ and $\delta$ running from $v_2$ to $v_1.$ Denote the intersections of $\gamma$ and $\delta$ with $M^c$ with the same symbols, and as above denote the boundary component linking $v_k$ by $B_k.$ Choose an orientation for each torus $T$ in the JSJ decomposition, and check whether the algebraic intersection number of $\gamma$ with $T$ is the negative of the intersection number of $\delta$ with $T.$ If the answer is no for any JSJ torus, then there is no admissible homotopy between $\gamma$ and $-\delta$. Otherwise, we will first analyse the intersections of the edges with the JSJ pieces $M_j.$ 

We start with some preliminary observations.
The conditions determining the properties essential and strongly essential with respect to the compact core apply to the intersection of $\gamma$ (resp.\thinspace $\delta$) with each piece in the JSJ decomposition of $M^c.$ 
We will count these components of $\gamma$ starting at the boundary component $B_1$ of $M_c,$ and write $\gamma = \gamma_1\star \gamma_2 \star \dots$ with $\gamma_j \subset M_j,$ where $M_j$ is a JSJ piece (these pieces are not necessarily distinct). Note that $\gamma_j$ may have endpoints on the same JSJ torus, but in this case, they are distinct as otherwise $\gamma_j$ would be a simple closed loop in $M^c.$

If the connected component $\gamma_j$ is not essential in $M_j,$ then it can be homotoped into $\partial M_j.$ Using the methods of \cite{AFW}, this can be done algorithmically using the solution of the word problem in $\pi_1(M_j)$, where we choose appropriate basepoints in the 0--skeleta and generating sets as well as paths for the inclusion maps in the 1--skeleta. Moreover, using the chosen generating sets for the JSJ tori, one can algorithmically replace $\gamma_j$ by the corresponding path $\tau$ in the 1--skeleton of the JSJ tori. Let $\gamma^\star$ denote the path obtained from $\gamma$ by replacing $\gamma_j$ with $\tau.$ Then $\gamma$ and $\gamma^\star$ are homotopic keeping their endpoints fixed, and we define the components of $\gamma^\star$ to be the components of $\gamma,$ except that $\gamma_{j-1}\star\tau\star\gamma_{j+1}$ is counted as one component since in this case $M_{j-1} =M _{j+1}$ and $\tau \subset \partial M_{j-1}.$ Hence we decrease the total number of components by two. Moreover, the intersection of $\gamma^\star$ with the JSJ tori is still transverse at the 
endpoints of each component of $\gamma^\star$ (even though $\gamma^\star$ is no longer transverse to them). We then repeat the above step with $\gamma^\star,$ and denote the resulting properly embedded arc in $M^c$ again by $\gamma^\star,$ until each component of $\gamma^\star$ is essential. Similarly for $\delta,$ resulting in $\delta^\star.$

If there is an admissible homotopy taking $\gamma$ to $-\delta$ in $\widehat{M},$ then there is an admissible homotopy between $\gamma^\star$ and $-\delta^\star$ in $M^c$ and they have the same number of components. Moreover, the  $k$-	-th component of $\gamma^\star$ has endpoints on the same tori and with the same respective signs as the $k$th component of $-\delta^\star.$ Hence we check this property of $\gamma^\star$ and $\delta^\star$ first, and if it is not satisfied, then there is no admissible homotopy taking $\gamma$ to $-\delta.$

Hence assume that the intersections match up. For each $k,$ we check whether the  $k$th component of $\gamma^\star$ is parallel to the  $k$th component of $\delta^\star$ in the JSJ piece that contains them. If the answer is no for any $k,$ then there is no admissible homotopy taking $\gamma$ to $-\delta.$ Hence assume that the components are pairwise parallel. 

Then there is a homotopy taking $\gamma^\star$ to the union of $\delta^\star$ with possibly some loops on the JSJ tori inserted. This is because in checking parallelism in the JSJ pieces, we are free to move the ends of arcs on the JSJ tori, which is no longer possible when considering the edges in $M^c.$ However, choosing basepoints appropriately, the Double Coset Membership Test in each JSJ piece will return the corresponding elements in the JSJ tori. Since for each $k$ the $k$th components are parallel, we can choose the same paths (but with opposite orientation) on the JSJ tori to connect the terminal points of a pair of parallel components as well as the initial points of the next pair of parallel components, and hence write the loop $\gamma^\star \star \rho_1 \star \delta^\star \star \rho_2$ based at $a_\gamma$ as a product of elements in the fundamental groups of the JSJ tori and the boundary components $B_1$ and $B_2$ (by choosing appropriate base points and inclusion homomorphisms). It then suffices to check that the resulting element is contained in the double coset $H_2H_1$ as above. There is an admissible homotopy between $\gamma$ and $-\delta$ if and only if the answer to this test is positive. 

Finally, if $v_1=v_2$ and there is no admissible path homotopy found between $\gamma$ and $-\delta,$ we also need to run the whole algorithm with $\delta$ and $-\delta$ interchanged.
\end{proof}

 
\providecommand{\bysame}{\leavevmode\hbox to3em{\hrulefill}\thinspace}
\providecommand{\href}[2]{#2}


\begin{thebibliography}{10}

\bibitem{Anisov-2006}  Sergei Anisov, \emph{Cut loci in lens manifolds.} C. R. Math. Acad. Sci. Paris 342 (2006), no. 8, 595--600.

\bibitem{AFW} Matthias Aschenbrenner, Stefan Friedl and Henry Wilton, \emph{Decision problems for 3--manifolds and their fundamental groups}, arXiv:1405.6274, 2014.

\bibitem{Regina}
Benjamin~A. Burton, Ryan Budney, William Pettersson, et~al., \emph{Regina:
  Software for 3--manifold topology and normal surface theory},
  \texttt{http://\allowbreak regina.\allowbreak sourceforge.\allowbreak net/},
  1999--2012.

\bibitem{SnapPy}
Marc Culler, Nathan~M. Dunfield, and Jeffery~R. Weeks, \emph{{S}nap{P}y, a
  computer program for studying the geometry and topology of 3--manifolds},
  \url{http://snappy.computop.org/}.

\bibitem{Dy64} Verena Huber Dyson, \emph{The word problem and residually finite groups}, Notices Amer. Math. Soc. 11 (1964), 734.

\bibitem{Ep}
David~B. A. Epstein,  \emph{Projective planes in 3--manifolds}, Proc. London Math. Soc. (2) \textbf{76} (1962), 180--184.

\bibitem{EP}
David~B. A. Epstein and Robert~C Penner, \emph{Euclidean decompositions of
  noncompact hyperbolic manifolds}, Journal of Differential Geometry
  \textbf{27} (1988), no.~1, 67--80.

\bibitem{EhrlichImhof}
P. Ehrlich, H.-C. Im Hof, \emph{Dirichlet regions in manifolds without conjugate points}, 
Commentarii Math. Helv. \textbf{54} (1979), 642--658. 
  
\bibitem{FW} Stefan Friedl and Henry Wilton, \emph{The membership problem for 3--manifold
groups is solvable}, arXiv:1401.2648, 2014.  

\bibitem{GKZ} I. M. Gelfand, M. M. Kapranov, and A. V. Zelevinsky, \emph{Discriminants, resultants, and multidimensional
determinants}, Mathematics: Theory \& Applications, Birkh\"auser Boston Inc., Boston,
MA, 1994.

\bibitem{GHRS} Stavros Garoufalidis, Craig D. Hodgson, J. Hyam Rubinstein and Henry Segerman, \emph{1--efficient triangulations and the index of a cusped hyperbolic 3--manifold}, arXiv:1303.5278v1, Geometry \& Topology, in press, 2015. 
  
\bibitem{guer}
Fran\c{c}ois Gu\'eritaud,  \emph{On canonical triangulations of once-punctured torus bundles and two-bridge link complements},
with an appendix by David Futer,  Geometry \& Topology  \textbf{10}  
(2006), 1239--1284.

\bibitem{guer-2009}
Fran\c{c}ois Gu\'eritaud, \emph{Delaunay triangulations of lens spaces}, arXiv:0901.2738v2, 2009.

\bibitem{hatcher_3--manifolds}
Allen Hatcher, \emph{Notes on Basic 3--manifold Topology}, available from \url{http://www.math.cornell.edu/~hatcher/3M/3Mdownloads.html}.

\bibitem{JH} John Hempel, \emph{Residual finiteness for 3--manifolds}, in: S. M. Gersten and John R. Stallings (eds.), Combinatorial Group Theory and Topology, pp. 379--396, Ann. of Math. Stud., vol. 111, Princeton Univ. Press, Princeton, NJ, 1987.

\bibitem{HRS}
Craig D. Hodgson, J. Hyam Rubinstein and Henry Segerman, \emph{Triangulations of hyperbolic 3--manifolds admitting strict angle structures},  Journal of Topology. \textbf{5} (2012), 887--908.

\bibitem{0-efficient}
William Jaco and J~Hyam Rubinstein, \emph{0-efficient triangulations of
  3--manifolds}, J. Differential Geometry \textbf{65} (2003), 61--168.

\bibitem{JR-layered}
William Jaco and J~Hyam Rubinstein, \emph{Layered-triangulations of 3--manifolds}, arXiv:math/0603601.

\bibitem{JRT} William Jaco, J.\thinspace Hyam Rubinstein and Stephan Tillmann, \emph{Minimal triangulations for an infinite family of lens spaces}, Journal of Topology 2 (2009) 157-180.

\bibitem{JT} William Jaco and Jeffrey L. Tollefson, \emph{Algorithms for the complete decomposition of a closed 3-manifold}, Illinois J. Math. 39 (1995), no. 3, 358--406.

\bibitem{kang_rubinstein_2005}
Ensil Kang and J~Hyam Rubinstein, \emph{Ideal triangulations of 3--manifolds
  {II}; taut and angle structures}, Algebraic \& Geometric Topology \textbf{5}
  (2005), 1505--1533.

\bibitem{lackenby00}
Marc Lackenby, \emph{Taut ideal triangulations of 3--manifolds}, Geom. Topol.
  \textbf{4} (2000), 369--395.
  
\bibitem{luo_tillmann_2008}
Feng Luo and Stephan Tillmann, \emph{Angle structures and normal surfaces},
  Trans. Amer. Math. Soc. \textbf{360} (2008), no.~6, 2849--2866.

\bibitem{Mat}  Sergei Matveev, \emph{Algorithmic topology and classification of 3-manifolds.} Algorithms and Computation in Mathematics, 9. Springer, Berlin, 2003.

\bibitem{Mos66} Andrzej W\l{}odzimierz Mostowski, \emph{On the decidability of some problems in special classes of groups}, Fund. Math. 59 (1966), 123--135.

\bibitem{Ni92} Graham A. Niblo, \emph{Separability properties of free groups and surface groups}, J. Pure Appl. Algebra 78 (1992), 77--84.

\bibitem{raman}
Shanil Ramanayake, \emph{$0$--efficient triangulations of Haken three--manifolds}, Ph.D. in progress, University of Melbourne.

\bibitem{rat}
John Ratcliffe, \emph{Foundations of Hyperbolic Manifolds}, Graduate Texts in Mathematics, Springer, Second Edition, 2006

\bibitem{rub78}
J.\thinspace Hyam Rubinstein, \emph{One-sided Heegaard splittings of 3--manifolds}, Pacific J. of Math. \textbf{76} (1978), 185--200.

\bibitem{schleimer-2001}
Saul Schleimer, \emph{Almost normal Heegaard splittings}, Ph.D.\thinspace Thesis, University of California, Berkeley. 2001. 81 pp.\\ Available from 
{\tt http://homepages.warwick.ac.uk/$\sim$masgar/index.html}\rm.

\bibitem{Sc78} Peter Scott, \emph{Subgroups of surface groups are almost geometric}, J. London Math. Soc. 17 (1978), 555--565.

\bibitem{segerman_tillmann_11}
Henry Segerman and Stephan Tillmann, \emph{Pseudo-developing maps for ideal triangulations {I}: Essential edges and generalised hyperbolic gluing equations}, \emph{Topology and Geometry in Dimension Three: Triangulations, Invariants, and Geometric Structures (Proceedings of the Jacofest conference)}, AMS Contemporary Mathematics \textbf{560} (2011), 85--102.

\bibitem{SeiThr} Herbert Seifert and William Threlfall, \emph{Lehrbuch der Topologie}, AMS Chelsea Publishing, {\bf 31}, 1934 (reprinted 2003).


\bibitem{stall}
John~R Stallings, \emph{On fibering certain 3--manifolds}, Topology of 3--manifolds and related topics (Proc. The Univ. of Georgia Institute, 1961), Prentice Hall, 95--100

\bibitem{stocking-2000}
Michelle Stocking, \emph{Almost normal surfaces in 3--manifolds}, Trans. Amer. Math. Soc. 352 (2000), no. 1, 171--207.

\bibitem{thompson-1994}
Abigail Thompson, \emph{Thin position and the recognition problem for $S^3$}, Math. Res. Lett. 1 (1994), no. 5, 613--630.

\bibitem{wald}
Friedhelm Waldhausen, \emph{Heegaard--Zerlegungen der 3-Sph\"are},  Topology 7 (1968) no. 2,  195–-203. 

\bibitem{wald1}
Friedhelm Waldhausen, \emph{On irreducible 3--manifolds which are sufficiently large}, Annals of Math. 87 (1968), 56--88.

\end{thebibliography}
\end{document}